\newtheorem{thm}{Theorem}[section]
 \newtheorem{cor}[thm]{Corollary}
 \newtheorem{lem}[thm]{Lemma}
 \newtheorem{prop}[thm]{Proposition}
\theoremstyle{definition}
 \newtheorem{ex}[thm]{Example}
 \newtheorem{rmk}[thm]{Remark}
 \newtheorem{defn}[thm]{Definition}
\newcommand{\Z}{\mathbb{Z}}
\newcommand{\R}{\mathbb{R}}
\newcommand{\CC}{\mathbb{C}}
\newcommand{\PP}{\mathbb{P}}
\newcommand{\cA}{{\mathcal A}}
\newcommand{\Hh}{{\mathcal H}}
\newcommand{\ov}{\overline}
\newcommand{\ot}{\otimes}
\newcommand{\Imt}{{\rm Im}\,}
\newcommand{\Ke}{{\rm Ker}\,}
\newcommand{\Cok}{{\rm Coker}\,}
\newcommand{\del}{\partial}
\newcommand{\delb}{{\bar \partial}}
\newcommand{\mub}{{\bar \mu}}
\newcommand{\wt}{\widetilde}
\newcommand{\Img}{\mathrm{Im}}
\newcommand{\Dol}{\mathrm{Dol}}
\newcommand{\dR}{\mathrm{dR}}
\newcommand{\Mh}{(M,J, \langle - , - \rangle)}
\newcommand{\greycell}{\cellcolor{black!8}}
\title{Dolbeault cohomology for almost complex manifolds}
\author[J. Cirici]{Joana Cirici}
\address[J. Cirici]{Department of Mathematics and Computer Science, Universitat de Barcelona\\
Gran Via 585\\
08008 Barcelona }
\email{jcirici@ub.edu}
\author[S. Wilson]{Scott O. Wilson}
  \address[S. Wilson]{Department of Mathematics, Queens College, City University of New York, 65-30 Kissena Blvd., Flushing, NY 11367}
  \email{scott.wilson@qc.cuny.edu}
\thanks{J. Cirici would like to acknowledge partial support from the AEI/FEDER, UE (MTM2016-76453-C2-2-P)
and the Serra H\'{u}nter Program}
\thanks{S. Wilson acknowledges support provided by a PSC-CUNY Award, jointly funded by The Professional Staff Congress and The City University of New York.}
\keywords{Almost complex manifolds, Dolbeault cohomology, Fr\"{o}licher spectral sequence, Hodge filtration, 
nilmanifolds, nearly K\"{a}hler manifolds, harmonic forms, Hodge theory.}
\subjclass[2010]{32Q60, 53C15}
\begin{document}

\begin{abstract}
This paper extends Dolbeault cohomology and its surrounding theory to arbitrary almost complex manifolds. We define a spectral sequence converging to ordinary cohomology, whose first page is the Dolbeault cohomology, and develop a harmonic theory which injects into Dolbeault cohomology. Lie-theoretic analogues of the theory are developed which yield important calculational tools for Lie groups and nilmanifolds. Finally, we develop applications to maximally non-integrable manifolds, including nearly K\"ahler $6$-manifolds, and show Dolbeault cohomology can be used to prohibit the existence of nearly K\"ahler metrics.
\end{abstract}

\maketitle

%\allowdisplaybreaks
%\setcounter{tocdepth}{2}
\setcounter{tocdepth}{1}
\tableofcontents

\section{Introduction}
Dolbeault cohomology is one of the most fundamental analytic invariants of complex manifolds, relating holomorphic complexity with topology by means of the Fr\"{o}licher spectral sequence. The different incarnations of Dolbeault cohomology through differential forms, harmonic subspaces of forms, and sheaf theory have made it a powerful and central tool in the study, classification, and deformation theory of complex structures, both in differential geometry and in algebraic geometry. 

Almost complex geometry encompasses complex, symplectic, and K\"{a}hler geometry within a quite general differential framework. It also arises naturally in theoretical physics via the consideration of general geometries in string theory and supersymmetry. An \textit{almost complex structure} on a smooth manifold $M$ is an endomorphism of the tangent bundle,
$J:TM\to TM$, squaring to minus the identity, $J^2=-I$. To such a structure, one associates a tensor
field $N_J$, called the \textit{Nijenhuis tensor}. The celebrated Newlander-Nirenberg theorem states that $N_J\equiv 0$
if and only if $J$ is actually a complex structure, thus furnishing the manifold with a holomorphic atlas.
In this case, the almost complex structure is said to be \textit{integrable}.

In real dimension four, there are many examples of almost complex manifolds that do not admit integrable structures.
In this case, obstructions are attained using a variety of tools, ranging from the
Enriques-Kodaira classification of complex surfaces and Chern number inequalities, to gauge theory
via Seiberg-Witten invariants. In real dimensions six and greater, it is not known whether there exists any almost
complex manifold not admitting an integrable structure. 
On the other hand, any symplectic manifold admits a compatible almost complex structure, which in the integrable case, defines a K\"{a}hler structure.

There are some relatively recent cohomological approaches for understanding the properties of almost complex manifolds. Motivated by Donaldson's open question
on tamed versus compatible symplectic forms \cite{Donaldson}, Li and Zhang \cite{LZ} introduced the so-called pure-and-full cohomologies, as subgroups of the de Rham cohomology of an almost complex manifold, which have been studied by many others (see for example \cite{Dra2, ATZ, HMT}). 
Another notable development is the Hodge theory of Tseng and Yau \cite{TY1, TY2}, who introduced new cohomology theories for symplectic manifolds. That theory has resurrected the classical harmonic symplectic theory introduced by Brylinski \cite{Brylinski}.
Related work has been developed in the setting of generalized complex geometry, a field introduced by Hitchin \cite{Hitchin} and further explored by Gualtieri \cite{Gualtieri}, which
contains complex and symplectic structures as extremal special cases. 
In this context, there is a naturally defined Dolbeault cohomology extending that of complex manifolds (\cite{Cavalcanti,CavalcantiDol}). However, for symplectic manifolds, this recovers ordinary de Rham cohomology.  
Additional progress has been made outside the K\"ahler world by Verbitsky's work on a harmonic theory for nearly K\"{a}hler manifolds \cite{Verbitsky2, Verbitsky}. As we will see below, that approach, deriving local identities for differential operators on nearly K\"{a}hler manifolds, is also a useful tool to study the Dolbeault cohomology defined here. It has also strongly influenced our work on harmonic symmetries for almost K\"{a}hler manifolds \cite{AK}, further developed by Tardini and Tomassini in \cite{TaToPreprint}, and influenced the complementary Hermitian case presented by the second author in \cite{SWH}.

The cohomological approaches mentioned above, while well attuned to studying various 
geometric structures, such as compatible metrics or symplectic structures, do not fulfill the natural desire for a cohomological theory of all almost complex manifolds that both captures topological features
and reduces to the known Dolbeault theory in the integrable case. 
In this paper, we introduce a natural extension of Dolbeault cohomology to all almost complex manifolds and establish some first foundational results concerning this invariant. These include a Fr\"{o}licher-type spectral sequence, an approach via harmonic theory, as well as applications to nilmanifolds, compact Lie groups, and nearly K\"{a}hler manifolds, opening up the door 
for the development of Dolbeault-based approaches to the study, classification, and deformation of almost complex manifolds.
\medskip

The extension of an almost complex structure $J:TM\to TM$ to the complexified tangent bundle of a manifold $M$ allows for a decomposition into $\pm i$-eigenspaces. This decomposition induces a bigrading on the complex de Rham algebra of differential forms
\[\cA^k_{\dR}(M)\otimes_{\R}\CC=\bigoplus_{p+q=k}\cA^{p,q}.\]
In the integrable case, the differential decomposes as $d=\delb+\del$ where $\delb$ has bidegree $(0,1)$ and $\del$ is its complex conjugate. The equation $d^2=0$ implies that $\delb^2=0$ and hence one defines Dolbeault cohomology as the cohomology with respect to the operator $\delb$.
In the general non-integrable case, there are two additional terms
\[d=\mub+\delb+\del+\mu\] arising from the Nijenhuis tensor, where $\mub$ has bidegree $(-1,2)$ and $\mu$ is its complex conjugate.
The equation $d^2=0$ decomposes into a set of seven equations, 
including $\mub^2=0$ as well as $\delb\mub+\mub\delb=0$.
In this case, $\delb^2\neq 0$, and so $\delb$-cohomology is not well-defined. However, the above two equations allow one to define the $\mub$-cohomology
vector spaces  
\[H_\mub^{p,q}(M):={\Ke(\mub:\cA^{p,q}\longrightarrow \cA^{p-1,q+2})\over \Img(\mub:\cA^{p+1,q-2}\longrightarrow \cA^{p,q})},
\]
and imply that
$\delb$ induces a morphism of bigraded vector spaces 
\[\delb:H^{p,q}_\mub(M)\to H^{p,q+1}_\mub(M).\] Furthermore, 
the equation $\mub \del + \del \mub +\delb^2=0$, which also follows from $d^2=0$, implies that $\delb$ squares to zero on $H_\mub^{*,*}(M)$.  We define the \textit{Dolbeault cohomology of $M$} by
\[H_{\Dol}^{p,q}(M):=H^{q}(H_\mub^{p,*}(M),\delb).
\]
This is a new invariant for almost complex manifolds which is functorial with respect to differentiable maps compatible with the almost complex structures. It generalizes the Dolbeault cohomology of complex manifolds, since in the integrable case, for which $\mub \equiv 0$, the cohomology $H_\mub^{*,*}(M)$ is the space of all forms. 
The notion of Hodge number then makes sense in the almost complex setting,  whenever Dolbeault cohomology is finite-dimensional.

A simple example involving a well-known character, the Kodaira-Thurston manifold, indicates the potential of this invariant. Recall this is a compact 4-dimensional manifold admitting both complex and symplectic structures, but not in a compatible way.
As we show below, any left-invariant almost complex structure on the Kodaira-Thurston manifold
has one of the following Hodge diamonds for the left-invariant Dolbeault cohomology,
where the diamond on the left corresponds to integrable structures while the one on the right corresponds to non-integrable ones. In particular, the Hodge diamond classifies integrability:
\[
\xymatrix@C=.1pc@R=.1pc{
 &&1\\
 &2&&1\\
 1&&2&&1\\
 &1&&2\\
 &&1
}\quad\quad\quad\quad\quad\quad
\xymatrix@C=.1pc@R=.1pc{
&&1\\
 &2&&1\\
 0&&4&&0
 \\
 &1&&2\\
 &&1
}
\]
A quick inspection on the above diamonds shows that, in contrast with the complex setting, 
Dolbeault cohomology is not upper semi-continuous for small deformations from integrable to non-integrable structures. This behaviour indicates we should not expect Dolbeault cohomology of almost complex manifolds to enjoy all the known good properties that are satisfied  in the integrable case. 

As another important example, 
while the Fr\"{o}licher spectral sequence of any compact complex surface degenerates at the first page, this is not the case for compact 4-dimensional almost complex manifolds. An example is given below by the filiform nilmanifold, a compact 4-dimensional manifold not admitting any integrable structure. Therefore, in the case of compact almost complex 4-manifolds, degeneration of the Fr\"{o}licher spectral sequence may be understood as a new obstruction to integrability.

To provide some further context for the developments in this paper, we recall that in 1954 Hirzebruch asked a series of fundamental questions related to the topology and geometry of smooth and complex manifolds \cite{Hirzebruch}. Among these is Problem 20, attributed to Kodaira and Spencer, which concerns Hermitian structures on almost complex manifolds, as we next  explain.
For any choice of compatible metric on an almost complex manifold, one can define the $\delb$-Laplacian  $\Delta_\delb$ using $\delb$ and its formal adjoint $\delb^*$. As noted by Hirzebruch in loc. cit., the $\delb$-Laplacian is elliptic even in the non-integrable case.
In particular, for any compact almost complex manifold with an almost Hermitian structure,
one can consider the finite-dimensional spaces $\Hh^{p,q}_\delb:=\Ke(\Delta_\delb)\cap\cA^{p,q}$ of $\delb$-harmonic forms and the numbers $h_\delb^{p,q}:=\dim \Hh_\delb^{p,q}$. 
In the integrable setting, these are just the Dolbeault numbers, and so are metric-independent. Hirzebruch writes:

\begin{quote}
Is $h_\delb^{p,q}$ independent of the choice of the
Hermitian structure? If not, give some other definition of these numbers which depend only on the almost-complex structure, and which generalizes the numbers $h_\delb^{p,q}=\dim H^{p,q}_\delb$ for a compact complex manifold.
\end{quote}

In 2013, Kotschick provided an updated account of Hirzebruch's problem list \cite{Problem}. On Problem 20, Kotschick writes ``There seems to have been no progress at all on this problem, which asks for a development of harmonic Dolbeault theory on arbitrary almost complex manifolds''.
The first part of Hirzebruch's question has been 
very recently answered negatively by Holt and Zhang in \cite{HZ}. Specifically,
they show that there is an almost complex structure on the Kodaira-Thurston manifold such that $h^{0,1}$ varies with
different choices of Hermitian metrics.
The Dolbeault cohomology theory introduced in the present paper 
settles the second part of Hirzebruch's question.

We end this introduction with an overview of the properties and applications of Dolbeault cohomology
for almost complex manifolds that are developed in this paper.

\subsection*{Fr\"{o}licher spectral sequence}
One main feature of Dolbeault cohomology is its relation to the Betti numbers via the Fr\"{o}licher spectral sequence, defined as the spectral sequence associated to the column filtration.
A key observation of the present work is that one may modify 
the classical Hodge filtration for complex manifolds by taking
into account the presence of $\mub$ and making it compatible with the total differential also in the non-integrable case.
We generalize the results of Fr\"{o}licher \cite{Fro} for complex manifolds, by showing that 
the 
Dolbeault cohomology of every almost complex manifold arises in the first stage of the spectral sequence
associated to this new Hodge filtration, which converges to the complex de Rham cohomology of the manifold (Theorem  \ref{E1HDol}):
\[H_{\Dol}^{p,q}(M)\cong E_1^{p,q}(M)\Longrightarrow H^{p+q}_{dR}(M,\CC).\]
Since the new Hodge filtration is compatible with morphisms of almost complex manifolds, 
it follows that the entire spectral sequence is functorial and a well-defined invariant of almost complex manifolds.
In the integrable case, for which $\mub\equiv 0$, we recover the usual Hodge
filtration and Fr\"{o}licher spectral sequence in addition to the usual description of Dolbeault cohomology.
Note that the $E_\infty$-page gives a natural bigrading on the complex de Rham cohomology of any almost complex manifold, generalizing the existing bigrading for complex manifolds.

The existence of a spectral sequence converging to de Rham cohomology allows for applications outside Dolbeault cohomology. For instance, in \cite{AK} we use this spectral sequence to study the numbers \[\ell^{p,q}:=\dim \Hh_d^{p,q}\]
of $d$-harmonic forms of pure bidegree $(p,q)$. These numbers enjoy very special properties in the almost K\"{a}hler case. We are able to determine them using the fact that they are bounded by any page of the spectral sequence in the corresponding bidegree.

\subsection*{Harmonic theory}
Via the choice of a Hermitian metric, one may consider the various Laplacians $\Delta_\delta$ 
associated to the operators $\delta=\mub, \delb,\del,\mub$ or $d$ as well as the corresponding spaces of \textit{$\delta$-harmonic forms}
\[\Hh_\delta^{p,q}:=\Ke(\Delta_\delta)\cap \cA^{p,q}.\]
The operators $\Delta_\delb$, $\Delta_\del$ and $\Delta_d$ are elliptic, but
$\Delta_\mub$ and $\Delta_\mu$ are not.

In order to compare Dolbeault cohomology with harmonic forms, we first consider the spaces of \textit{$\delb$-$\mub$-harmonic forms}, given by the intersections
$\Hh^{p,q}_\delb \cap \Hh^{p,q}_\mub$, which are the same as $\Ke(\Delta_\delb + \Delta_\mub)\cap \cA^{p,q}$.
These spaces are finite-dimensional and satisfy Serre duality whenever $M$ is compact. They always inject into Dolbeault cohomology and in the extremal bidegrees $q=0$ and $q=m$, we obtain isomorphisms 
\[\Hh^{*,q}_\delb \cap \Hh^{*,q}_\mub\cong H_{\Dol}^{*,q}(M).\]
However, in general this isomorphism is not true in arbitrary bidegrees.

For almost complex manifolds whose $\mub$ operator has locally constant rank (so that the rank of $\mub:\cA^{p,q}_x\to \cA^{p-1,q+2}_x$ at the fibers
is constant as a function of $x\in M$) we consider an intermediate space as a candidate for describing Dolbeault cohomology purely in terms of harmonic forms. This is defined as follows.
First, on the space of $\mub$-harmonic forms there is an operator
\[\delb_\mub:\Hh^{p,q}_\mub\longrightarrow \Hh^{p,q+1}_\mub \quad \text{given by} \quad \delb_\mub(\alpha):=\Hh_\mub(\delb \alpha),\]
where $\Hh_\mub(\alpha)$ denotes a projection of $\alpha$ into $\mub$-harmonic forms. The locally constant rank condition ensures this is a smooth form. In Theorem \ref{HDolidentifications} we prove this
is a square zero operator whose cohomology computes the Dolbeault cohomology
\[H_{\Dol}^{*,*}(M)\cong {{\Ke(\delb_\mub)}\over{\Img(\delb_\mub)}}.\]

There is also a Laplacian $\Delta_{\delb_\mub}$  defined via the formal adjoint $\delb^*_\mub$ to $\delb_\mub$
and corresponding spaces of
 \textit{$\delb_\mub$-harmonic forms} $\Hh_{\delb_\mub}^{p,q}:=\Ke(\Delta_{\delb_\mub})\cap \Hh_{\mub}^{p,q}$.
We have the following \emph{Harmonic Inclusion Theorem} (\ref{HITCR}), which generalizes the Hodge decomposition theorem for compact complex manifolds: For any compact almost Hermitian manifold $M$, there is the following containment and inclusion:
\[
\Hh_\delb \cap \Hh_{\mub} \,  \subseteq  \, \Hh_{\delb_\mub}  \, \stackrel{\star}{\hookrightarrow} \, 
{\Ke(\delb_\mub)\over\Img(\delb_\mub)}\cong
H_{\Dol}(M).
\]
The injection $(\star)$ is an isomorphism if $\Img(\delb_\mub)^\perp \cong \Cok(\delb_\mub)$. This is the case, for instance, for any left-invariant almost complex structure on a finite-dimensional real Lie algebra.

The above spaces of harmonic forms lead to finite-dimensionality of Dolbeault cohomology in various situations. For instance, when $M$ is a compact manifold of dimension $2m$, the bottom and top rows $H_{\Dol}^{p,0}(M)$ and $H_{\Dol}^{p,m}(M)$ are finite-dimensional for all $p$, and $\dim H^{0,0}_{\Dol}(M)\cong \dim H_{\Dol}^{m,m}(M)$ accounts for the connected components of the manifold.
These spaces are also useful to make explicit calculations on
spaces with locally constant rank $\mub$, such as 
nilmanifolds and nearly K\"ahler $6$-manifolds.

\subsection*{Lie algebra cohomology}
Already in the integrable case, Dolbeault cohomology can be difficult to compute in general.
A framework that is particularly useful is that of Lie algebra cohomology, which allows to compute geometric invariants
for compact Lie groups as well as for nilmanifolds. A main advantage of this framework is that 
the computation of Dolbeault cohomology is reduced, by construction, to finite-dimensional linear algebra problems.

An almost complex structure on a real Lie algebra $\mathfrak{g}$ of dimension $2m$ defines a bigrading
on the Chevalley-Eilenberg dg-algebra $\cA^*_{\mathfrak{g}_\CC}$ associated to the complexification $\mathfrak{g}_\CC$
and its differential decomposes into $d=\mub+\delb+\del+\mu$.
This gives an obvious notion of \textit{Lie algebra Dolbeault cohomology}
\[H_{\Dol}^{p,q}(\mathfrak{g},J):=H^{q}(H_\mub^{p,*}(\mathfrak{g},J),\delb)\]
which is identified as the first stage of a spectral sequence converging to 
$H^*({\mathfrak{g}_\CC})$.

Under the hypothesis that $H^{2m}(\mathfrak{g}_\CC)=\CC$ (a condition that is satisfied 
for the Lie algebra of $G$ connected and compact, and also for $G$ nilpotent)
we obtain $\delb_\mub$-Hodge decomposition on $\cA^*_{\mathfrak{g}_\CC}$, giving an isomorphism
\[
H_{\Dol}^{p,q}(\mathfrak{g},J)\cong \Hh_{\delb_\mub}^{p,q}(\mathfrak{g}),
\]
where $\Hh_{\delb_\mub}^{p,q}(\mathfrak{g})$ is the subspace of $\cA^{p+q}_{\mathfrak{g}_\CC}$ of 
$\delb_\mub$-harmonic forms, defined after choosing a metric on $\mathfrak{g}$ compatible with $J$. 
This extends work of Rollenske \cite{Rollenske2} to the non-integrable setting (Theorem \ref{LiedelbmubHodge}).
In particular, the dimensions of $\Hh_{\delb_\mub}^{p,q}(\mathfrak{g})$ are metric-independent numbers.

Lie-algebra Dolbeault cohomology admits a translation to the geometric situation of compact Lie groups and nilmanifolds, respectively. We explain the latter situation.
Let $M= \Gamma \backslash G$ be a nilmanifold, where $G$ is a nilpotent Lie group with Lie algebra $\mathfrak{g}$, and $\Gamma$ is a co-compact subgroup acting on the left. Assume that 
$G$ carries a left $G$-invariant almost complex structure $J$, making $M$ into an almost complex manifold.
There is an inclusion $\cA^*_{\mathfrak{g}}\hookrightarrow \cA^*_{\dR}(M)$ which is a quasi-isomorphism by  Nomizu's Theorem \cite{Nomizu}.
We define the \textit{left-invariant Dolbeault cohomology} of $(M,J)$
by letting \[{}^L H_{\Dol}^{*,*}(M,J):=H^{*,*}_{\Dol}(\mathfrak{g},J).\]
Similarly, there is a \textit{left-invariant
Fr\"{o}licher spectral sequence} converging to the complex de Rham cohomology $H^*(M,\CC)$ of the manifold.
The harmonic theory for Lie algebras gives a description of
${}^L H_{\Dol}^{*,*}(M,J)$ in terms of $\delb_\mub$-harmonic left-invariant forms.
In general, even in the integrable case, it is not known if the inclusion of left-invariant forms into the algebra of forms of $M$ induces
an isomorphism on Dolbeault cohomology, although this is the case in several situations (see \cite{CFGU2}, \cite{ConFi}, \cite{Rollenske}, \cite{FRR}). In contrast, for maximally non-integrable structures, there are recent negative results in this direction. Indeed, while left-invariant cohomology is finite-dimensional by construction, 
in \cite{Stelzigandfriends} it is shown that Dolbeault cohomology for maximally non-integrable manifolds of dimension 4 or 6, turns out to be infinite-dimensional. Still, we always have an inclusion
\[{}^{L} H_{\Dol}^{*,*}(M,J)\hookrightarrow H_{\Dol}^{*,*}(M,J)\]
which, in some situations, proves to be sufficient in order to infer geometric results.

\subsection*{Maximally non-integrable and nearly K\"{a}hler manifolds}
Complex manifolds and maximally non-integrable manifolds are two endpoints in the
spectrum of almost complex manifolds. An almost complex structure is said to be \textit{maximally non-integrable} if the Nijenhuis
tensor \[N_J:T_x M  \otimes T_x M \longrightarrow T_xM\] has maximal rank at all points $x \in M$.
A main result below is that, for 4- and 6-dimensional manifolds, this condition implies degeneration of the 
Fr\"ohlicher spectral sequence at the second stage (Theorem \ref{46maxE2deg}). 

An important family of maximally non-integrable manifolds is given by (strictly) nearly K\"ahler $6$-manifolds, studied extensively by Gray \cite{Gray}, as well as \cite{Verbitsky2}, \cite{Verbitsky}, and others. In particular, 
the degeneration condition may be used to prohibit the existence of metrics 
for which the almost complex structure is nearly K\"ahler. 
The special local identities satisfied by nearly K\"ahler $6$-manifolds, together with the Harmonic Inclusion Theorem,
lead to other special properties of their Fr\"{o}licher spectral sequence.
We show by example that in general the Dolbeault cohomology contains strictly more information that the de Rham cohomology. 
For instance, the Lie group $SU(2)\times SU(2)$ admits a left-invariant nearly K\"{a}hler structure, with associated Hodge diamonds

\[
\xymatrix@C=.1pc@R=.1pc{
&&&1\\
&&3&&0\\
&0&&3&&0\\
0&&1&&1&&0\\
&0&&3&&0\\
&&0&&3\\
&&&1
}
\quad\quad\quad\quad
\xymatrix@C=.1pc@R=.1pc{
&&&1\\
&&0&&0\\
&0&&0&&0\\
0&&1&&1&&0\\
&0&&0&&0\\
&&0&&0\\
&&&1
}
\]
where the diamond on the left-hand side correspond to the Dolbeault numbers $h_{\Dol}^{*,*}$ and 
the one on the right to the Hodge numbers in de Rham cohomology $h_{\dR}^{*,*}$ obtained by Verbitsky \cite{Verbitsky}.
In this case, the left-invariant Fr\"{o}licher spectral sequence degenerates at the second term, and the above agree with the dimensions of $E_1$ and $E_2$ respectively.

In addition to the main $E_2$-degeneration result,  we show that the failure of first page, i.e. the Dolbeault cohomology, to agree with de Rham cohomology in certain bidegrees also prohibits the existence of nearly K\"ahler metrics compatible with a given almost complex structure. Indeed, we show that for any compact nearly K\"{a}hler 6-manifold  we have $h_{\Dol}^{2,1}=h_{\dR}^{2,1}$.

\section{Differential forms on almost complex manifolds}\label{SecPreliminars}
Let $M$ be an almost complex manifold of dimension $2m$. By definition, there is an endomorphism $J:TM\to TM$ such that $J^2=-1$ and the real tangent space $T_xM$ at $x\in M$ has a complex structure $J_x$. The 
complexification decomposes into $+i$ and $-i$ eigenspaces $T^{1,0}_xM$ and $T^{0,1}_xM$ respectively:
\[T_xM\otimes\CC= T^{1,0}_xM\oplus T^{0,1}_xM.\]
By taking duals and exterior powers, this decomposition gives a bigrading on the algebra of differential forms with values in $\CC$:
\[\cA^n(M):=\cA_{\dR}^n(M)\otimes_\R \CC=\bigoplus_{p+q=n}\cA^{p,q}.\]
As an abbreviated notation, we denote the fiber of forms at $x \in M$ by 
\[
\cA^{p,q}_x := \bigwedge^p \left(T^*_x M \ot \CC\right)^{1,0}  \otimes  \bigwedge^q \left(T^*_x M \ot \CC \right)^{0,1}
\]
Since $\cA^{p,q}$ is generated by $\cA^{0,0}$, $\cA^{0,1}$ and $\cA^{1,0}$, one may verify that for all $p,q\geq 0$, 
the exterior derivative satisfies
\[d(\cA^{p,q})\subseteq \cA^{p-1,q+2} \oplus \cA^{p,q+1} \oplus \cA^{p+1,q} \oplus \cA^{p+2,q-1}.\]
Therefore, we may write
$d=  \mub+\delb+\del+\mu$
where the bidegrees of each component are given by 
\[
|\mub|=(-1,2), \,  |\delb|=(0,1), \, |\del|=(1,0), \text{ and } \,  |\mu|=(2,-1).
\]
The components $\delb$ and $\mub$ are complex conjugate to $\del$ and $\mu$ respectively.

Expanding the equation $d^2 =0$ we obtain the following set of equations:
\newcommand\numberthis{\addtocounter{equation}{1}\tag{$\vartriangle$}}
\begin{align*}
\mu^2 = 0 \\
\mu \del + \del \mu =  0 \\
\mu \delb + \delb \mu +\del^2 =  0 \\
\mu \mub + \del \delb + \delb \del + \mub \mu  = 0  \numberthis \label{eq;drelations}\\
\mub \del + \del \mub +\delb^2 =  0 \\
\mub \delb + \delb \mub =  0 \\
\mub^2 = 0
\end{align*}

Also, by expanding the Leibniz rule
\[d(\omega \wedge \eta)=d\omega \wedge \eta + (-1)^{|\omega|}\omega \wedge d\eta \]
we obtain a Leibniz rule for each component of $d$. Note that by
degree consideration, $\mub$ and $\mu$ vanish on functions, so each are linear over functions, and therefore give well defined fiberwise maps, $\mub: \cA_x^{p,q} \to \cA_x^{p-1,q+2}$, and similarly for $\mu$. 
In particular, both $(\cA^{*,*},\mub)$ and $(\cA^{*,*},\mu)$ are commutative differential bigraded algebras.

The integrability theorem of Newlander and Nirenberg \cite{NN} states that the almost complex structure $J$
is integrable if and only if $N_J\equiv 0$, where
\[N_J : TM \ot TM \longrightarrow TM\] denotes the Nijenhuis tensor 
\[
N_J(X,Y) := [X,Y]+ J[X,JY] + J[JX,Y]-[JX,JY].
\]
The following well-known result shows that 
$J$ is integrable if and only if $\mub\equiv 0$. We provide a short proof for convenience of the reader.
 
\begin{lem} \label{lem;Nmub}
With the above notation, we have:
\[
\mu + \mub = - \frac 1 4 \left( N_J \ot \textrm{Id}_\mathbb{C} \right)^*,
\]
where the right hand side has been extended over all forms as a derivation. 
\end{lem}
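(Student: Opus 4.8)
The plan is to verify the identity by a direct local computation, reducing everything to how each side acts on functions and on $1$-forms, since both sides are derivations and the algebra of forms is generated in those degrees. First I would record the well-known formula relating the exterior derivative and the Lie bracket: for a $1$-form $\alpha$ and vector fields $X,Y$, one has $d\alpha(X,Y) = X\alpha(Y) - Y\alpha(X) - \alpha([X,Y])$. Dually, the component of $d$ that shifts bidegree by $(-1,2)$ acts on a $1$-form by feeding it only the $(1,0)$-part of the bracket of two $(0,1)$-vector fields; that is, for $\alpha \in \cA^{1,0}$ and $\ov Z, \ov W$ of type $(0,1)$, the only surviving term is $-\alpha(\pi^{1,0}[\ov Z,\ov W])$, where $\pi^{1,0}$ is the projection onto $T^{1,0}$. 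Similarly $\mu$ on $\alpha \in \cA^{0,1}$ evaluated on $(1,0)$-fields $Z,W$ gives $-\alpha(\pi^{0,1}[Z,W])$. So the task becomes: show that $N_J$, contracted appropriately, reproduces these bracket-projections up to the factor $-\tfrac14$.

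The key computation is to evaluate the Nijenhuis tensor on pairs of vectors of pure type. If $Z,W \in T^{1,0}$, then $JZ = iZ$ and $JW = iW$, so
\[
N_J(Z,W) = [Z,W] + J[Z, iW] + J[iZ, W] - [iZ, iW] = [Z,W] + 2i\,J[Z,W] + [Z,W] = 2\bigl([Z,W] + i\,J[Z,W]\bigr).
\]
Writing $[Z,W] = \pi^{1,0}[Z,W] + \pi^{0,1}[Z,W]$ and using $J = i$ on $T^{1,0}$, $J = -i$ on $T^{0,1}$, the term $[Z,W] + iJ[Z,W]$ collapses to $2\pi^{0,1}[Z,W]$, so $N_J(Z,W) = 4\,\pi^{0,1}[Z,W]$. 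Taking complex conjugates, $N_J(\ov Z, \ov W) = 4\,\pi^{1,0}[\ov Z, \ov W]$ for $\ov Z, \ov W \in T^{0,1}$. A parallel check shows $N_J$ vanishes on mixed pairs (one $(1,0)$ and one $(0,1)$ vector), which matches the fact that $\mu + \mub$ has no component acting via mixed brackets. Dualizing, $\bigl(N_J \ot \mathrm{Id}_\CC\bigr)^*$ acting on $\alpha \in \cA^{1,0}$ yields, on the pair $\ov Z, \ov W$, the value $\alpha\bigl(N_J(\ov Z, \ov W)\bigr) = 4\,\alpha(\pi^{1,0}[\ov Z, \ov W]) = -4\,(\mub\alpha)(\ov Z, \ov W)$, and likewise $\bigl(N_J \ot \mathrm{Id}_\CC\bigr)^* \alpha = -4\,\mu\alpha$ for $\alpha \in \cA^{0,1}$. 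Summing over both types gives $(N_J \ot \mathrm{Id}_\CC)^* = -4(\mu + \mub)$ on $1$-forms, which is the claimed identity on generators.

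To finish, I would invoke the derivation property on both sides: $\mu + \mub$ is a sum of two components of the derivation $d$, hence a derivation of the appropriate degree; and the right-hand side is a derivation by hypothesis, being the extension of the tensorial operator $-\tfrac14(N_J \ot \mathrm{Id}_\CC)^*$ as a derivation. Two derivations (of the same parity) that agree on a generating set of an algebra agree everywhere, so the identity holds on all of $\cA^{*,*}$. It only remains to note the identity is trivially true on $\cA^{0,0}$, since $\mu$ and $\mub$ vanish on functions by degree reasons and $N_J$ contracts to zero against a $0$-form; this also confirms the two sides have matching (even) parity as derivations.

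The main obstacle is purely bookkeeping: keeping the factors of $i$, the signs in the alternating contraction $(N_J \ot \mathrm{Id})^*$, and the antisymmetrization conventions consistent, so that the constant comes out exactly $-\tfrac14$ rather than some other normalization. There is no conceptual difficulty once one commits to a convention for how a $(1,2)$-tensor is turned into an operator on forms; I would fix that convention explicitly at the outset to avoid sign ambiguities, and sanity-check it against the integrable case where $N_J \equiv 0$ forces $\mu = \mub = 0$, recovering the classical statement that integrability is equivalent to the vanishing of the $(-1,2)$ and $(2,-1)$ components of $d$.
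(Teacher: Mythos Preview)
Your proposal is correct and follows essentially the same approach as the paper: both reduce to $1$-forms via the derivation property, invoke Cartan's formula $d\alpha(X,Y) = X\alpha(Y) - Y\alpha(X) - \alpha([X,Y])$, and compute the Nijenhuis tensor on pure-type vectors to extract the factor $-\tfrac14$. The only cosmetic difference is that the paper evaluates on real vectors $X,Y$ and then projects via $\pi_{1,0},\pi_{0,1}$, whereas you work directly with complex vectors of pure type; your ordering is arguably cleaner, since it makes the vanishing on mixed pairs and the factor of $4$ in $N_J(Z,W)=4\,\pi^{0,1}[Z,W]$ explicit rather than deferring them to a ``straightforward calculation.''
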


\begin{proof} Let $\pi_{1,0}$ and $\pi_{0,1}$ denote the projections onto $T^{1,0}M$ and $T^{0,1}M$ respectively, and let 
$\pi^{1,0}$ and $\pi^{0,1}$ be the dual projections onto the respective co-tangent spaces. Let $\pi^{2,0}$ be projection onto $(2,0)$-forms. For $\omega$ a $(0,1)$-form, and any real vectors $X$ and $Y$, we use that $\omega$ vanishes on $(1,0)$-vectors to compute
\begin{eqnarray*}
\left( \mu \, \omega \right) (X,Y) &=& \pi^{2,0} d \, \omega (X,Y) \\
&=& d \omega (\pi_{1,0} X, \pi_{1,0} Y ) \\
&=& -\omega ( \pi_{0,1} [\pi_{1,0} X, \pi_{1,0} Y ] ) \\
&=& - \frac 1 4 \omega ( N_J(X,Y)).
\end{eqnarray*}
The first equality is by definition and the third uses Cartan's formula 
\[d \omega(X,Y) = X \omega(Y) - Y \omega(X) - \omega( [X,Y]).\]
The last equality follows from a straightforward calculation using 
\[\pi_{1,0} = \frac 1 2 \left(Id - i J \right),\, \pi_{0,1} = \frac 1 2 \left( Id + i J \right),\]
the Nijenhuis tensor, 
and again the fact that $\omega$ vanishes on $(1,0)$-vectors.

Similarly we have $\mub \, \eta = -\frac 1 4 \left( N_J \otimes \textrm{Id}_\mathbb{C} \right)^*$ for $\eta$ a $(1,0)$-form. Since $\mu$ vanishes on $(1,0)$-forms, and 
$\mub$ vanishes on $(0,1)$-forms, the lemma follows.
\end{proof}

\begin{defn}
A \textit{morphism of almost complex manifolds} $f:M\to M'$ is given by a differentiable map
compatible with the almost complex structures, i.e., for all $x \in M$ the following diagram commutes:
\[
\xymatrix{
T_x{M}\ar[d]^{J_x}\ar[r]^{f_*}&T_{f(x)}M'\ar[d]^{{J'_{f(x)}}}\\
T_x{M}\ar[r]^{f_*}&T_{f(x)}M'
}
\]
An \textit{isomorphism} of almost complex manifolds is a morphism of almost complex manifolds which is also a diffeomorphism.
\end{defn}

Note that any morphism of almost complex manifolds $f:M\to M'$ induces a morphism of differential graded algebras
$f^*:\cA({M'})\to \cA(M)$
 such that:
\begin{enumerate}
 \item[(i)] $f^*$ preserves the $(p,q)$-decompositions: $f^*(\cA^{p,q}_{M'})\subseteq \cA^{p,q}_{M}$.
 \item[(ii)] $f^*$ is compatible with $d$ component-wise:
if $\delta$ is any of the components $\mub$, $\delb$, $\del$ or $\mu$ of the differential $d$, then
$f^*\delta =\delta f^*$.
\end{enumerate}

\section{Fr\"{o}licher spectral sequence}\label{SecFro}
Throughout this section, let $M$ be an almost complex manifold and denote by $(\cA=\bigoplus \cA^{*,*},d)$ its complex de Rham algebra. 

By using the last three of the equations in (\ref{eq;drelations}) of Section \ref{SecPreliminars}, we will arrive at our definition of \emph{Dolbeault cohomology}.
First, the equation $\mub^2=0$ of (\ref{eq;drelations}) gives well-defined vector spaces 
\[H_\mub^{p,q}(M):={\Ke(\mub:\cA^{p,q}\longrightarrow \cA^{p-1,q+2})\over \Img(\mub:\cA^{p+1,q-2}\longrightarrow \cA^{p,q})}.
\]
Next, from the equation $\delb\mub+\mub\delb=0$, it follows that $\delb$ induces a well-defined map
$\delb:H^{p,q}_\mub\to H^{p,q+1}_\mub$. Finally, the equation $\mub \del + \del \mub +\delb^2=0$ shows $\delb^2$ is chain homotopic to zero, with respect to the differential $\mub$, and the chain homotopy $\del$.

\begin{defn} \label{defnDol}
Define the \textit{Dolbeault cohomology of $M$} by  
\[H_{\Dol}^{p,q}(M):=H^{q}(H_\mub^{p,*},\delb)={\Ke(\delb:H_\mub^{p,q}(M)\longrightarrow H_\mub^{p,q+1}(M))\over
\Img(\delb:H_\mub^{p,q-1}(M)\longrightarrow H_\mub^{p,q}(M))
}.\]
\end{defn}

We next show that the Dolbeault cohomology of every almost complex manifold arises with the first stage of a functorial spectral sequence converging to the complex de Rham cohomology. For this, we modify the classical Hodge filtration by taking into account the presence of $\mub$ and making it compatible with the total differential.

\begin{defn}\label{Hodgefildef}
Define the \textit{Hodge filtration} of $\cA$ as the decreasing filtration $F$ given by
\[F^p\cA^n:=\Ke(\mub)\cap \cA^{p,n-p}\oplus\bigoplus_{i> p}\cA^{i,n-i}.\]
\end{defn}

Note that in the integrable case, $F$ coincides with the usual Hodge filtration.
\begin{lem}
The Hodge filtration $F$ makes $(\cA,d,F)$ into a filtered dg-algebra with 
\[F^{n+1}\cA^n=0\text{ and }F^0\cA^n=\cA^n\text{ for all }n\geq 0.\]
\end{lem}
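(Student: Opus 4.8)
The plan is to verify the three claims in turn: that $F$ is a filtration by dg-subspaces (i.e. $d F^p \cA^n \subseteq F^p \cA^{n+1}$), that $F$ is multiplicative ($F^p \cA \wedge F^{p'} \cA \subseteq F^{p+p'}\cA$), and the boundary conditions $F^{n+1}\cA^n = 0$ and $F^0\cA^n = \cA^n$.

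The boundary conditions are immediate. For $F^{n+1}\cA^n$: the direct sum $\bigoplus_{i > n+1}\cA^{i,n-i}$ is zero since $n-i < 0$ for $i > n$, and the remaining term $\Ke(\mub)\cap \cA^{n+1,-1}$ is zero since $\cA^{n+1,-1}=0$. For $F^0\cA^n$: here $\bigoplus_{i>0}\cA^{i,n-i}$ together with $\Ke(\mub)\cap\cA^{0,n}$ must equal all of $\cA^n = \bigoplus_{i\ge 0}\cA^{i,n-i}$, so I must check $\Ke(\mub)\cap\cA^{0,n} = \cA^{0,n}$, i.e. that $\mub$ vanishes on $\cA^{0,n}$; this holds because $\mub$ has bidegree $(-1,2)$ and $\mub(\cA^{0,n})\subseteq \cA^{-1,n+2}=0$. (Equivalently, one can state $F^p\cA^n = \cA^n$ for all $p \le 0$.)

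The main point is compatibility with $d$. Take $\om \in F^p\cA^n$ and write $\om = \om_p + \sum_{i>p}\om_i$ with $\om_i \in \cA^{i,n-i}$ and $\mub \om_p = 0$. I need $d\om \in F^p\cA^{n+1}$, i.e. the $(p,\,n+1-p)$-component of $d\om$ lies in $\Ke(\mub)$ and there is no component in bidegree $(i, n+1-i)$ for $i < p$. The lowest-column contributions to $d\om$ come only from $\mub$ applied to $\om_p$ and from $\mub,\delb$ applied to the higher-column terms: since $\mub$ lowers column degree by one and $\delb$ preserves it, the component of $d\om$ in column degree $< p$ can only be $\mub\om_p$, which vanishes by assumption; the component in column degree exactly $p$ is $\mub\om_{p+1} + \delb\om_p$. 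So it remains to show $\mub(\mub\om_{p+1} + \delb\om_p) = 0$. Now $\mub^2 = 0$ kills the first term, and for the second, $\mub\delb\om_p = -\delb\mub\om_p = 0$ using the relation $\mub\delb + \delb\mub = 0$ from (\ref{eq;drelations}) together with $\mub\om_p = 0$. This is the one place where the modified (rather than naive) filtration is essential, and it is the crux of the lemma.

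For multiplicativity, take $\om \in F^p\cA^n$ and $\eta \in F^{p'}\cA^{n'}$ and consider $\om\wedge\eta$. Decomposing both into bihomogeneous pieces, every wedge $\om_i \wedge \eta_j$ with $i \ge p$, $j \ge p'$ and at least one strict sits in column degree $> p+p'$, hence in $\bigoplus_{k>p+p'}\cA^{k,\bullet}$. The only remaining term is $\om_p \wedge \eta_{p'}$, which has bidegree $(p+p', \bullet)$, so I must show it lies in $\Ke(\mub)$. This follows from the Leibniz rule for $\mub$ (noted after (\ref{eq;drelations}), where $(\cA^{*,*},\mub)$ is a bigraded algebra): $\mub(\om_p\wedge\eta_{p'}) = \mub\om_p \wedge \eta_{p'} \pm \om_p \wedge \mub\eta_{p'} = 0$ since $\mub\om_p = 0$ and $\mub\eta_{p'} = 0$. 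Finally, $\cA$ being a dg-algebra is standard (it is the complex de Rham algebra), so collecting these facts gives that $(\cA, d, F)$ is a filtered dg-algebra with the stated boundary behavior.
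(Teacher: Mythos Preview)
Your proof is correct and follows essentially the same approach as the paper: both arguments verify $dF^p\subseteq F^p$ by isolating the column-$p$ component and applying the relations $\mub^2=0$ and $\mub\delb+\delb\mub=0$, and both deduce $F^0\cA^n=\cA^n$ from the vanishing of $\mub$ on $\cA^{0,*}$. You supply more detail on multiplicativity (via the Leibniz rule for $\mub$), which the paper simply calls straightforward.
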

\begin{proof}
Using the fact that
\[d(\cA^{p,q})\subseteq \cA^{p-1,q+2} \oplus \cA^{p,q+1} \oplus \cA^{p+1,q} \oplus \cA^{p+2,q-1}\]
we first note that
\[d(\Ke(\mub)\cap \cA^{p,n-p})\subseteq
\delb(\Ke(\mub)\cap \cA^{p,n-p})\oplus \cA^{p+1,n-p}\oplus\cA^{p+2,n-1-p}.
\]
Since $\delb\mub+\mub\delb=0$ the first direct summand satisfies 
\[\delb(\Ke(\mub)\cap \cA^{p,n-p})\subseteq \Ke(\mub)\cap \cA^{p,n+1-p}.\]
Now, note that 
\[d(\bigoplus_{i\geq p+1}\cA^{i,n-i})\subseteq \mub(\cA^{p+1,n-p-1})\oplus \bigoplus_{i\geq p+1}\cA^{i,n-i}.\]
Since $\mub^2=0$ the first direct summand satisfies
\[\mub(\cA^{p+1,n-p-1})\subseteq \Ke(\mub)\cap \cA^{p,n+1-p}.\]
This proves that $F$ is compatible with the differential and hence $(\cA,d,F)$ is a filtered cochain complex.
Compatibility of $F$ with the algebra structure is straightforward.
The boundedness conditions follow from the fact that 
 $\mub$ is trivial on $\cA^{0,*}$.
\end{proof}

\begin{defn} \label{FSS}
The \textit{Fr\"{o}licher spectral sequence} $\{E_r(M),\delta_r\}_{r \geq 0}$ of $M$ is the spectral sequence
$E_r(M):=E_r(\cA,F)$
associated to the filtered dg-algebra $(\cA,d,F)$.
\end{defn}
For each $r\geq 0$, the pair $(E_r(M),\delta_r)$ is a commutative differential $r$-bigraded algebra.
Since $F$ is bounded, this spectral sequence 
converges to the complex de Rham cohomology \[H^*_{\dR}(M,\mathbb{C})=H_{\dR}^*(M,\R)\otimes_\R\mathbb{C}.\]

It will be useful to consider an alternative filtration
whose spectral sequence is related to that of $F$ by a shift of indexing (c.f. Remark \ref{compareFs}).
It is defined as follows:
\begin{defn} \label{shFSS}
Define the \textit{shifted Hodge filtration} of $\cA$ as the decreasing filtration $\widetilde{F}$ given by
\[\wt F^p\cA^n:=\bigoplus_{i\geq p-n}\cA^{i,n-i}.\]
\end{defn}

\begin{lem}\label{comparison_ss}
The shifted Hodge filtration  makes $(\cA,d,\widetilde{F})$ into a filtered dg-algebra such that 
\[\wt{F}^{2n+1}\cA^n=0\text{ and } \wt{F}^{n}\cA^n=\cA^n.
\]
For $r\geq 0$ we have canonical morphisms of differential bigraded algebras
\[E_{r}^{p,n-p}(\cA,F)\longrightarrow E_{r+1}^{p+n,-p}(\cA,\widetilde F)\]
which are isomorphisms for all $r\geq 1$.
\end{lem}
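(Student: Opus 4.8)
The plan is to verify the three assertions in turn: that $\wt F$ is a filtered dg-algebra with the stated boundedness, that there are natural maps $E_r^{p,n-p}(\cA,F)\to E_{r+1}^{p+n,-p}(\cA,\wt F)$, and that these are isomorphisms for $r\ge 1$. For the first part, compatibility of $\wt F$ with $d$ follows immediately from the four-term expansion $d(\cA^{p,q})\subseteq \cA^{p-1,q+2}\oplus\cA^{p,q+1}\oplus\cA^{p+1,q}\oplus\cA^{p+2,q-1}$: raising $n$ by $1$ while passing from a component of $\wt F^p\cA^n$ (where the column index $i$ satisfies $i\ge p-n$) to a component of $\wt F^p\cA^{n+1}$ requires only $i\ge p-(n+1)$, and each of the four output bidegrees has column index $\ge i-1\ge p-n-1 = p-(n+1)$. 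Compatibility with the wedge product is the usual bookkeeping on column indices, and the boundedness statements $\wt F^{2n+1}\cA^n=0$ and $\wt F^n\cA^n=\cA^n$ are read off directly from the definition (the column index ranges over $0\le i\le n$).

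The heart of the matter is the comparison of spectral sequences. The key observation is that $\wt F$ is, up to reindexing, a \emph{décalée} of $F$ in the classical sense: unravelling the definitions, $\wt F^{p+n}\cA^n = \bigoplus_{i\ge p}\cA^{i,n-i}$, which differs from $F^p\cA^n = \bigl(\Ke(\mub)\cap\cA^{p,n-p}\bigr)\oplus\bigoplus_{i>p}\cA^{i,n-i}$ precisely by whether one includes all of $\cA^{p,n-p}$ or only its intersection with $\Ke(\mub)$ in the lowest piece. I would make this precise as follows. Recall that for any filtered complex $(\cA,d,G)$ the décalée filtration $\mathrm{Dec}\,G$ is defined by $(\mathrm{Dec}\,G)^p\cA^n := \Ke\bigl(d\colon G^{p+n}\cA^n\to \cA^{n+1}/G^{p+n+1}\cA^{n+1}\bigr)$, and the standard theorem (see e.g. Deligne, or Illusie) gives natural isomorphisms $E_r^{p,q}(\cA,\mathrm{Dec}\,G)\cong E_{r+1}^{?}(\cA,G)$ after a shift of bidegree. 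The plan is to check that $\wt F$ (reindexed) coincides with $\mathrm{Dec}\,(\text{the naive column filtration } G^p\cA^n=\bigoplus_{i\ge p}\cA^{i,n-i})$: indeed $d(\cA^{i,n-i})$ lands in columns $\ge i-1$, so the condition $d\alpha\in G^{p+n+1}\cA^{n+1}$ for $\alpha\in G^{p+n}\cA^n$ forces the lowest-column component of $d\alpha$, which is $\mub$ applied to the column-$(p)$ part of $\alpha$, to vanish — and this is exactly the condition cutting $\wt F$ out of $G$. Then the stated maps and the isomorphism for $r\ge1$ are an instance of the general décalage theorem; alternatively, one checks directly that $E_0(\cA,\wt F)$ with its differential computes, column by column, the $\mub$-cohomology $H_\mub^{*,*}$ together with the induced $\delb$, so that $E_1(\cA,\wt F)\cong H_{\Dol}(M)\cong E_1(\cA,F)$ (this last isomorphism being Theorem \ref{E1HDol}), and that the higher pages then match on the nose.

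I would organize the direct verification (avoiding invocation of décalage as a black box) by computing $E_0$ and $E_1$ of $\wt F$ explicitly. On $\mathrm{Gr}^{p+n}_{\wt F}\cA^n = \cA^{p,n-p}$ the induced differential $\delta_0$ is the component of $d$ preserving column index, namely $\delb$; so $E_1^{p+n,-p}(\cA,\wt F) = H^{n-p}(\cA^{p,*},\delb)$ — wait, this is the naive column-filtration $E_1$, \emph{not} yet Dolbeault cohomology, which is the point of the décalage shift. Passing to $E_1$ of $\wt F$ and recomputing with the next differential recovers the $\mub$-correction, and one identifies $E_2(\cA,\wt F)$ in bidegree $(p+n,-p)$ with $E_1^{p,n-p}(\cA,F)\cong H^{p,n-p}_{\Dol}(M)$. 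The comparison maps for general $r$ come from the inclusion of filtered complexes relating the two filtrations; functoriality and multiplicativity are automatic since both filtrations are defined by bidegree conditions compatible with the product.

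\textbf{Main obstacle.} The main obstacle is purely bookkeeping: getting the index shift $E_r(\cA,F)\leftrightarrow E_{r+1}(\cA,\wt F)$ exactly right, and in particular being careful that the \emph{lowest} graded piece of $F$ carries the $\Ke(\mub)$ condition while $\wt F$ does not — so the two filtrations are genuinely different, and it is only after one turn of the spectral-sequence crank that they agree. I expect the cleanest writeup is to cite the general décalage lemma for filtered complexes and then verify the one nontrivial compatibility, namely that $\wt F$ is the décalée of the naive column filtration, which as noted above reduces to the single observation that the obstruction to $d\alpha$ dropping a column is exactly $\mub$ acting on the leading term of $\alpha$.
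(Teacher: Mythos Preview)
Your approach is exactly the paper's: show that the two filtrations are related by Deligne's d\'{e}calage and invoke the standard comparison of spectral sequences. However, you have the direction reversed throughout. It is $F=\mathrm{Dec}\,\wt F$, not the other way around. Your own computation actually shows this: you correctly observe that for $\alpha\in\wt F^{p+n}\cA^n=\bigoplus_{i\ge p}\cA^{i,n-i}$, the condition $d\alpha\in\wt F^{p+n+1}\cA^{n+1}$ forces $\mub$ of the column-$p$ part of $\alpha$ to vanish. But that is precisely the condition defining $F^p\cA^n$, not $\wt F$; the ``naive column filtration'' $G$ you introduce is the same as $\wt F$ after reindexing, so saying ``$\wt F$ (reindexed) coincides with $\mathrm{Dec}\,G$'' is saying $\wt F=\mathrm{Dec}\,\wt F$. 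Once you flip the roles, the d\'{e}calage lemma gives the maps $E_r(\cA,F)=E_r(\cA,\mathrm{Dec}\,\wt F)\to E_{r+1}(\cA,\wt F)$ with the stated shift, which is what the lemma asserts.

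Relatedly, in your direct verification the $E_0$-differential for $\wt F$ is $\mub$, not $\delb$: the graded piece $\mathrm{Gr}^{p+n}_{\wt F}\cA^{n+1}$ is $\cA^{p-1,n-p+2}$ (since $\wt F^{p+n}\cA^{n+1}=\bigoplus_{i\ge p-1}\cA^{i,n+1-i}$), so the induced map $\cA^{p,n-p}\to\cA^{p-1,n-p+2}$ is $\mub$. Then $E_1(\cA,\wt F)\cong H_\mub^{*,*}$ with induced differential $\delb$, and $E_2(\cA,\wt F)\cong H_{\Dol}\cong E_1(\cA,F)$, matching the shift by one. Your ``wait'' signals you sensed the discrepancy; the fix is simply to swap which differential appears at which page.
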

\begin{proof}
We have:
\begin{multline*}
d(\wt{F}^p\cA^n)=\bigoplus_{i\geq p-n}d \cA^{i,n-i}\subseteq \\
\subseteq
\bigoplus_{i\geq p-n}
\left(\cA^{i-1,n-i+2}\oplus\cA^{i,n-i+1}\oplus\cA^{i+1,n-i}\oplus\cA^{i+2,n-i-1}\right)\subseteq \\ 
\subseteq\bigoplus_{i\geq p-n}\cA^{i-1,n-i+2}=\bigoplus_{i\geq p-n-1}\cA^{i,n+1-i}=\wt{F}^p\cA^{n+1}.
\end{multline*}
This proves that $\widetilde{F}$ is compatible with the differential and hence $(\cA,d,\widetilde{F})$ is a filtered cochain complex. Compatibility of $F$ with the algebra structure is straightforward.

Now, note that the filtrations $F$ and $\widetilde F$ are related via Deligne's d\'{e}calage filtration:
\[F^p\cA^n=\mathrm{Dec} \widetilde{F}^p\cA^n:=\{\omega\in \widetilde{F}^{p+n}\cA^n; d\omega\in \widetilde{F}^{p+n+1}\cA^{n+1}\}.
\]
Thus the lemma follows from Proposition I.3.4 of \cite{DeHII}, stating that the 
spectral sequences associated with a filtration and its d\'{e}calage are related by a shift of indexing as above.
\end{proof}

\begin{rmk}\label{multicomplex}
Note that the shifted Hodge filtration makes $\cA$ into a bigraded multicomplex by letting $\widetilde \cA^{p,n-p}:=\cA^{p-n,2n-p}$. Then by definition we have that 
\[\widetilde F^p\cA^n=\bigoplus_{i\geq p} \widetilde \cA^{i,n-i},\]
so $\widetilde F$ is the column filtration associated to this multicomplex.
With this new bigrading, the components $\mub$, $\delb$, $\del$ and $\mu$ of $d$ have bidegrees 
\[|\mub|'=(0,1), |\delb|'=(1,0), |\del|'=(2,-1)\text{ and }|\mu|'=(3,-2).\]
So the shifted Hodge filtration just rotates the bidegrees of the components of $d$
with respect to the initial grading given by the Hodge filtration.
\end{rmk}

The Fr\"{o}licher spectral sequence admits a very explicit description in terms of the components 
$\mub$, $\delb$, $\del$ and $\mu$ of the differential, whose higher terms we detail in the Appendix.
Here we just describe the first stage.

\begin{thm}\label{E1HDol}
Let $M$ be an almost complex manifold. We have isomorphisms
\[H_{\Dol}^{p,q}(M)\cong E_1^{p,q}(M)\cong 
{\left\{ \omega \in \cA^{p,q}\cap\Ke(\mub); \delb  \omega  \in\Img(\mub)\right\}
\over{
\left\{  \omega  \in \cA^{p,q}\text{ such that }  \omega  =\mub \alpha +\delb \beta \text{ with } \mub \beta =0\right\}
}}
\]
and the differential $\delta_1:E_1^{p,q}(M)\to E_1^{p+1,q}(M)$  is given by
\[\delta_1[ \omega  ]=[\del   \omega  -\delb \eta],\]
where $\eta$ is any form in $\cA^{p+1,q-1}$ satisfying $\mub \eta = \delb \omega$.
\end{thm}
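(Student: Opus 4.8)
The plan is to compute the $E_0$ and $E_1$ pages of the spectral sequence of the filtered complex $(\cA, d, F)$ directly from the definition, and then identify the resulting groups with the Dolbeault cohomology of Definition \ref{defnDol}. First I would set up notation: recall that $E_0^{p,n-p} = F^p\cA^n / F^{p+1}\cA^n$ and that $\delta_0$ is the map induced by $d$. Since $F^p\cA^n = (\Ke\mub \cap \cA^{p,n-p}) \oplus \bigoplus_{i>p}\cA^{i,n-i}$, the quotient $E_0^{p,n-p}$ is canonically $\Ke(\mub)\cap\cA^{p,n-p}$, and from the computation in the proof of the preceding lemma the induced differential $\delta_0$ is exactly the map $\delb$ restricted to $\Ke(\mub)\cap\cA^{p,*}$ (the other components $\del$ and $\del'$ shift filtration up, and $\mub$ lands in $F^{p+1}$ after the identification; note $\mub$ itself is zero on $\Ke\mub$). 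Hence $E_1^{p,q} = H^q\big(\Ke(\mub)\cap\cA^{p,*},\, \delb\big)$.

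Next I would produce the isomorphism $H^q(\Ke(\mub)\cap\cA^{p,*},\delb) \cong H_{\Dol}^{p,q}(M)$. Here the key point is that there is an obvious surjection $\Ke(\mub)\cap\cA^{p,q} \twoheadrightarrow H_\mub^{p,q}(M)$, and I claim it induces the desired isomorphism on the relevant cohomology. A $\delb$-closed class in $\Ke(\mub)\cap\cA^{p,*}$ maps to a $\delb$-closed class in $H^{p,*}_\mub$ (using $\delb\mub+\mub\delb=0$ so that $\delb$ descends, and $\mub\del+\del\mub+\delb^2=0$ so that it squares to zero, exactly as recalled before Definition \ref{defnDol}). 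For surjectivity: a class in $H_{\Dol}^{p,q}$ is represented by $\omega\in\Ke(\mub)\cap\cA^{p,q}$ with $\delb\omega = \mub\alpha$ for some $\alpha\in\cA^{p+1,q-1}$; I want to modify $\omega$ to land in $\Ke(\mub\delb)\dots$ — rather, the cleanest route is to read off the explicit "double quotient" description on the right-hand side of the statement directly: a class in $E_1^{p,q}$ is $[\omega]$ with $\omega\in\cA^{p,q}\cap\Ke(\mub)$ and $\delb\omega\in F^{p+1}$, i.e. $\delb\omega\in\Ke(\mub)\cap\cA^{p,q+1}\oplus\cA^{p+2,q-1}\oplus\cdots$; since $\delb\omega$ has bidegree $(p,q+1)$ this just says $\delb\omega\in\Img(\mub)$ (the piece $\mub\beta$ being the only part of $F^{p+1}$ in that bidegree). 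The relations on the denominator come from unwinding $F^{p+1}\cA^n + d(F^p\cA^{n-1})$ in bidegree $(p,q)$: the $\cA^{i,n-i}$ with $i>p$ contribute the $\mub\alpha$ term, and $\delb$ of a $\mub$-closed $(p,q-1)$-form contributes the $\delb\beta$ term with $\mub\beta=0$. This gives the middle isomorphism, and the identification with $H^q(H_\mub^{p,*},\delb)$ is then a short diagram chase comparing the two quotients.

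Finally, for the formula for $\delta_1$: the induced differential on $E_1$ is the standard "connecting-type" map of a spectral sequence — given $[\omega]\in E_1^{p,q}$ with $\omega\in\Ke(\mub)$ and $\delb\omega=\mub\eta$, lift $\omega$ to $F^p\cA^{p+q}$, apply $d$, and project. We have $d\omega = \mub\omega + \delb\omega + \del\omega + \mu\omega = \delb\omega + \del\omega + \mu\omega$ (since $\mub\omega=0$), and we must subtract off something in $F^{p+1}$ whose image under $d$ kills the $F^{p+1}$-part of $d\omega$ up to $F^{p+2}$; concretely $\delb\omega = \mub\eta$ is itself in the image of the $E_0$-differential applied to $\eta\in\cA^{p+1,q-1}$ (after noting $\eta$ can be taken $\mub$-closed up to a correction, or simply $d\eta = \mub\eta + \delb\eta+\cdots$), so replacing the naive representative $d\omega$ by $d(\omega - \eta)$ removes the $\mub\eta$ term and the $E_1^{p+1,q}$-class is represented by the $(p+1,q)$-component of $d(\omega-\eta)$, namely $\del\omega - \delb\eta$. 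I would check that this is independent of the choices of $\eta$ and of representative $\omega$, which is automatic from general spectral-sequence theory but worth a line.

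\medskip

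The main obstacle I expect is not any single deep step but the bookkeeping in the second paragraph: one must be careful that the "naive" $E_1$ computed as $H^*(E_0,\delta_0)$, the explicit double-quotient on the right, and $H^q(H^{p,*}_\mub,\delb)$ are all literally the same group and not merely abstractly isomorphic, since later functoriality statements and the $\delta_1$-formula are phrased in terms of honest representatives. The cleanest way to avoid getting lost is to prove $E_1^{p,q}\cong H^q(H^{p,*}_\mub,\delb)$ by exhibiting the surjection $\Ke(\mub)\cap\cA^{p,q}\to H^{p,q}_\mub$, checking it is a chain map for $(\delb,\delb)$, and verifying it induces iso on cohomology by a direct kernel/image computation — the surjectivity on cohomology uses that any $\delb$-closed class in $H^{p,q}_\mub$ lifts to a representative $\omega$ with $\delb\omega\in\Img\mub$, which is immediate, and injectivity uses the relation $\delb^2 = -(\mub\del+\del\mub)$ to correct a $\delb$-boundary-mod-$\Img\mub$ back to an honest $\delb$-boundary in $\Ke\mub$.
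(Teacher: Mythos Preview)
Your $E_0$ computation is wrong, and this error propagates. You claim $E_0^{p,q}=F^p\cA^{p+q}/F^{p+1}\cA^{p+q}$ is canonically $\Ke(\mub)\cap\cA^{p,q}$, but $F^{p+1}\cA^{p+q}$ contains only $\Ke(\mub)\cap\cA^{p+1,q-1}$ in bidegree $(p+1,q-1)$, not all of $\cA^{p+1,q-1}$. Hence
\[
E_0^{p,q}\;\cong\;\bigl(\Ke(\mub)\cap\cA^{p,q}\bigr)\ \oplus\ \bigl(\cA^{p+1,q-1}/\Ke(\mub)\bigr),
\]
and $\delta_0$ mixes the two summands: on a pair $(\omega,[\alpha])$ it returns $(\delb\omega+\mub\alpha,\,[\del\omega+\delb\alpha])$. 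Consequently your intermediate assertion $E_1^{p,q}=H^q\bigl(\Ke(\mub)\cap\cA^{p,*},\delb\bigr)$ is not what the spectral sequence produces; the correct $E_1$ is the quotient in the statement of the theorem, which is $H^q(H^{p,*}_\mub,\delb)$, and this is genuinely different from $H^q(\Ke\mub,\delb)$ unless the complex $(\Img(\mub)\cap\cA^{p,*},\delb)$ happens to be acyclic. Your ``cleanest route'' paragraph also stumbles on this: you write that $\delb\omega\in F^{p+1}$ forces $\delb\omega\in\Img(\mub)$, but $F^{p+1}\cap\cA^{p,q+1}=0$, not $\Img(\mub)$; the $\Img(\mub)$ condition appears only once you allow the representative in $F^p$ to have a nonzero $(p+1,q-1)$-component.

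The paper avoids this bookkeeping entirely by passing to the \emph{shifted} Hodge filtration $\widetilde F$, which \emph{is} the na\"{\i}ve column filtration of a multicomplex (Remark~\ref{multicomplex}), and then invoking Deligne's d\'ecalage (Lemma~\ref{comparison_ss}) to obtain $E_1^{p,q}(\cA,F)\cong E_2^{p+n,-p}(\cA,\widetilde F)$. For $\widetilde F$ the $E_0$-page really is the bigraded pieces with $\widetilde\delta_0=\mub$, so $E_1(\cA,\widetilde F)=H_\mub$ and $E_2(\cA,\widetilde F)=H_{\Dol}$ on the nose; the explicit cycle/boundary formulas come from the multicomplex description in \cite{LW}. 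Your direct approach \emph{can} be salvaged: compute $E_1$ honestly from the two-term $E_0$ above (the map $(\omega,[\alpha])\mapsto\omega$ is a bijection from $\Ke\delta_0$ onto $\{\omega\in\Ke\mub:\delb\omega\in\Img\mub\}$, and tracking the image of $\delta_0$ gives exactly the stated denominator). Your computation of $\delta_1$ via the lift $\omega-\eta$ is correct and in fact illustrates the point: the representative in $F^p$ that actually lies in $Z_1$ is $\omega-\eta$, not $\omega$ alone.
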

\begin{proof}
By Lemma \ref{comparison_ss}, we have 
\[E_1^{p,n-p}(M):=E_1^{n-p}(\cA,F)\cong E_2^{p+n,-p}(\cA,\widetilde F),\]
where $\widetilde F$ is the shifted Hodge filtration.
Therefore it suffices to identify $E_2^{p+n,-p}(\cA,\widetilde F)$ with $H_{\Dol}^{p,q}(M)$ and describe 
the differential 
\[\delta_1=\widetilde \delta_2:E_2^{*,*}(\cA,\widetilde F)\longrightarrow E_2^{*+2,*-1}(\cA,\widetilde F).\]
As noted in Remark \ref{multicomplex}, the filtration $\widetilde F$ is the column filtration of a multicomplex $(\widetilde A^{*,*},d=d_0+d_1+d_2+d_3)$ whose differential has four components
of bidegree $|d_i|=(i,1-i)$, where $\widetilde \cA^{p,n-p}:=\cA^{p-n,2n-p}$ and
$d_0=\mub$, $d_1=\delb$, $d_2=\del$ and $d_3=\mu$.
Each term of its associated spectral sequence as well as a formula for the induced differentials 
\[\widetilde \delta_i:E_i^{*,*}(\cA,\widetilde F)\longrightarrow E_i^{*+i,*-i+1}(\cA,\widetilde F)\] 
have a known description, given by Livernet-Whitehouse-Ziegenhagen in \cite{LW}. We obtain:
\[E_0^{p+n,-p}(\cA,\widetilde F)\cong \cA^{p,n-p}\text{ and }\widetilde \delta_0=\mub.\]
The first stage is given by 
\[E_1^{p+n,-p}(\cA,\widetilde F)\cong H_\mub^{p,q}(M)\text{ and }\widetilde \delta_1[ \omega ]=[\delb  \omega ].\]
In particular, we find that 
\[E_2^{p+n,-p}(\cA,\widetilde F)\cong H^{p+n,-p}(E_1^{*,*}(\cA,\widetilde F),\widetilde\delta_1)\cong H^{q}(H_\mub^{p,*},\delb)\cong H_{\Dol}^{p,q}(M).\]
Lastly, the description of $E_2^{p+n,-p}(\cA,\widetilde F)$ and  $\delta_1=\widetilde \delta_2$
appearing in \cite{LW}, directly gives the formulas in the statement of the theorem. 

In fact, the formula for the first and second stages of the spectral sequence of a multicomplex coincides 
with those of a bicomplex. Therefore the description of $H_{\Dol}^{*,*}(M)$ coincides with the description of the $E_2$-term of the classical spectral sequence defined by Fr\"{o}licher in \cite{Fro}, after replacing the roles of $\delb$ and $\del$ by $\mub$ and $\delb$.
\end{proof}

\begin{rmk} \label{compareFs}
The shifted Hodge filtration $\widetilde F$ in some sense contains more information than the Hodge filtration $F$. Indeed, we have $F=\mathrm{Dec} \widetilde F$, but in general, Deligne's d\'{e}calage functor $\mathrm{Dec}$ does not have an inverse.
Our presentation of Dolbeault cohomology as the first stage of the spectral sequence associated to $F$ (instead of
presenting it as the second stage of the spectral sequence associated to $\widetilde F$) is mainly to 
naturally recover the usual Fr\"{o}licher spectral sequence in the integrable case, as well as to preserve the original bidegrees of the components $\mub$, $\delb$, $\del$ and $\mu$ of the total differential.
However, in certain situations it may be useful to consider the shifted spectral sequence. For instance, note that 
Lemma 
\ref{ss_functorial} below shows that the first term
\[(E_1^{*,*}(\cA,\widetilde F),\widetilde \delta_1)\cong (H_\mub^{*,*}(M),\delb)\]
is a well-defined invariant of the almost complex manifold $M$.
In analogy with the integrable setting, one may call the quasi-isomorphism
type of this differential bigraded algebra, the \textit{Dolbeault homotopy type of $M$}. By Lemma \ref{comparison_ss}, 
the algebra $(H_\mub^{*,*}(M),\delb)$ is quasi-isomorphic to the algebra $(E_0^{*,*}(M),d_0)$, but the description of the former one retains the original geometry.
\end{rmk}

The following Lemma shows that the Fr\"{o}licher spectral sequence, and in particular $H_{\Dol}^{*,*}(M)$, is a well-defined invariant for almost complex manifolds.

\begin{lem}\label{ss_functorial}
Let $f:M\to M'$ be a morphism of almost complex manifolds. Denote by $\cA$ (resp. $\cA'$) the complex de Rham algebra of $M$ (resp. $M'$).
Then $f^*:\cA'\to \cA$ is compatible with both $\widetilde{F}$ and $F$:
\[f^*(\widetilde{F}^p\cA')\subseteq \widetilde{F}^p\cA\text{ and }f^*(F^p\cA')\subseteq F^p\cA.\]
In particular, for all $r\geq 0$ it induces morphisms of spectral sequences 
 \[E_r(f^*):E_r(\cA',\widetilde{F})\to E_r(\cA,\widetilde{F})\text{ and }E_r(f^*):E_r(\cA',{F})\to E_r(\cA,{F})\]
 and hence a morphism between Dolbeault cohomologies
 \[f^{*}:H_{\Dol}^{p,q}(M')\longrightarrow H_{\Dol}^{p,q}(M).\]
\end{lem}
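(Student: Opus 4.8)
\textit{Proof sketch.}
The plan is to reduce the entire statement to the two structural properties of $f^*$ recorded immediately after the definition of a morphism of almost complex manifolds: first, that $f^*\colon\cA'\to\cA$ preserves the bigrading, $f^*((\cA')^{p,q})\subseteq\cA^{p,q}$; and second, that it intertwines each component of the differential separately, in particular $f^*\mub=\mub f^*$. Granting these, compatibility with both filtrations is essentially immediate.

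For the shifted Hodge filtration, $\widetilde F^p\cA'$ is in each total degree a direct sum of bihomogeneous pieces $(\cA')^{i,j}$, and since $f^*$ sends $(\cA')^{i,j}$ into $\cA^{i,j}$ it follows at once that $f^*(\widetilde F^p\cA')\subseteq\widetilde F^p\cA$. For the Hodge filtration $F$, the only summand not covered by bigrading-preservation alone is $\Ke(\mub)\cap(\cA')^{p,n-p}$; but if $\mub\omega=0$ then $\mub(f^*\omega)=f^*(\mub\omega)=0$, so $f^*$ carries $\Ke(\mub)\cap(\cA')^{p,n-p}$ into $\Ke(\mub)\cap\cA^{p,n-p}$, and hence $f^*(F^p\cA')\subseteq F^p\cA$. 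Since $f^*$ is a morphism of dg-algebras compatible with each of these filtrations, the general functoriality of the spectral sequence of a filtered complex, applied to the filtered dg-algebras $(\cA',\widetilde F)\to(\cA,\widetilde F)$ and $(\cA',F)\to(\cA,F)$, yields morphisms of spectral sequences $E_r(f^*)$ for all $r\geq 0$.

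It remains to identify the induced map on Dolbeault cohomology. The quickest route is to invoke Theorem \ref{E1HDol}, which provides a natural isomorphism $H_{\Dol}^{p,q}(-)\cong E_1^{p,q}(-,F)$; then the desired $f^*\colon H_{\Dol}^{p,q}(M')\to H_{\Dol}^{p,q}(M)$ is simply $E_1(f^*)$ transported along this isomorphism. One can also exhibit the map concretely: $f^*\mub=\mub f^*$ makes $f^*$ descend to a morphism $H_\mub^{*,*}(M')\to H_\mub^{*,*}(M)$, and $f^*\delb=\delb f^*$ makes this descended map a morphism of the complexes $(H_\mub^{p,*},\delb)$, which therefore passes to cohomology and gives $f^*$ on $H_{\Dol}$; a short diagram chase checks that the two constructions agree.

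I expect no genuine obstacle here: the argument is bookkeeping. The one point requiring a little care is the $\Ke(\mub)$ summand in the definition of $F$, which is precisely where one needs the component-wise compatibility $f^*\mub=\mub f^*$ rather than merely $f^*d=df^*$; everything else follows formally from naturality of the spectral sequence construction.
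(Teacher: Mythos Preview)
Your proof is correct and follows essentially the same approach as the paper: bigrading-preservation gives compatibility with $\widetilde F$, while $f^*\mub=\mub f^*$ gives $f^*(\Ke(\mub))\subseteq\Ke(\mub)$ and hence compatibility with $F$. The paper records only these two observations and leaves the spectral-sequence functoriality and the Dolbeault map implicit; your additional remarks spelling these out are accurate but not strictly needed.
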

\begin{proof}
Since $f^*$ preserves bigradings,
it is compatible with $\widetilde{F}$. Since $f^*\mub =\mub f^*$
we have that $f^*(\Ke(\mub))\subseteq \Ke(\mub)$. Therefore $f^*$ is compatible with $F$.
\end{proof}

This immediately gives:
\begin{cor} \label{HdRpq}
The complex de Rham cohomology of an almost complex manifold has a bigrading,
given by the $E_\infty$-page of the 
Fr\"{o}licher spectral sequence.
This bigrading is functorial and generalizes the 
existing bigrading on the complex de Rham cohomology of complex manifolds.
\end{cor}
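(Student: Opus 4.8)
\subsection*{Proof proposal}

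The plan is to obtain the statement formally from the convergence of the Fr\"{o}licher spectral sequence together with the functoriality already established in Lemma~\ref{ss_functorial}. Since $(\cA,d,F)$ is a filtered dg-algebra with $F$ bounded (by the Lemma preceding Definition~\ref{FSS}), the filtration $F$ on $\cA$ induces a filtration, again denoted $F$, on $H^n_{\dR}(M,\CC)=H^n(\cA,d)$, namely $F^pH^n_{\dR}(M,\CC)=\Img\big(H^n(F^p\cA)\to H^n(\cA)\big)$, which satisfies $F^0H^n=H^n$ and $F^{n+1}H^n=0$. Convergence of the spectral sequence then gives canonical isomorphisms
\[E_\infty^{p,q}(M)\;\cong\;\mathrm{gr}^p_F H^{p+q}_{\dR}(M,\CC)=F^pH^{p+q}_{\dR}(M,\CC)/F^{p+1}H^{p+q}_{\dR}(M,\CC).\]
Working over the field $\CC$, the bounded filtered vector space $H^n_{\dR}(M,\CC)$ splits, non-canonically, into the direct sum of its graded pieces, whence a bigrading
\[H^n_{\dR}(M,\CC)\;\cong\;\bigoplus_{p+q=n}E_\infty^{p,q}(M);\]
its canonical incarnation is the Hodge-type filtration $F$ on $H^*_{\dR}(M,\CC)$, whose associated graded is $\{E_\infty^{p,q}(M)\}$, and since each $E_r(M)$ is a bigraded algebra, this decomposition is multiplicative.

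For functoriality, let $f\colon M\to M'$ be a morphism of almost complex manifolds. By Lemma~\ref{ss_functorial}, $f^*\colon(\cA',F)\to(\cA,F)$ is a morphism of filtered dg-algebras, so it induces morphisms of spectral sequences $E_r(f^*)$ for all $r\geq 0$, in particular $E_\infty(f^*)\colon E_\infty^{p,q}(M')\to E_\infty^{p,q}(M)$. Moreover $f^*$ on de Rham cohomology respects the induced filtrations, $f^*(F^pH^n_{\dR}(M'))\subseteq F^pH^n_{\dR}(M)$, and under the convergence isomorphisms the map that $f^*$ induces on $\mathrm{gr}_F$ is exactly $E_\infty(f^*)$. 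This is precisely the asserted functoriality of the bigrading. Finally, to see that it generalizes the complex-manifold picture, recall the remark after Definition~\ref{Hodgefildef}: when $J$ is integrable one has $\mub\equiv0$, so $F^p\cA^n=\bigoplus_{i\geq p}\cA^{i,n-i}$ is the usual Hodge filtration and $\{E_r(M)\}$ is the classical Fr\"{o}licher spectral sequence; hence $E_\infty^{p,q}(M)$ recovers the associated graded of the Hodge filtration on $H^*_{\dR}(M,\CC)$, which is the standard bigrading in the integrable setting (and the Hodge decomposition in the K\"ahler case).

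I do not expect a genuine obstacle here, as the corollary is a formal consequence of the machinery already in place. The only point demanding care is conceptual rather than technical: a spectral sequence canonically produces a filtration on the abutment, not a decomposition, so the word ``bigrading'' must be read as the canonical collection $\{E_\infty^{p,q}\}=\{\mathrm{gr}^p_F H^{p+q}_{\dR}(M,\CC)\}$ together with a (non-canonical, $\CC$-linear) splitting $H^n_{\dR}\cong\bigoplus_{p+q=n}E_\infty^{p,q}$. The steps I would write out in detail are the verification that the filtration appearing on $H^*_{\dR}$ is exactly the one induced by $F$ on $\cA$, and that Lemma~\ref{ss_functorial} does yield compatibility of $f^*$ with this filtration all the way to the $E_\infty$-page.
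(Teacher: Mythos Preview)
Your proposal is correct and follows exactly the paper's approach: the corollary is stated immediately after Lemma~\ref{ss_functorial} with no further argument, as a formal consequence of convergence of the bounded spectral sequence together with the functoriality established there. Your write-up simply spells out the details (and rightly flags that what is canonical is the induced filtration on $H^*_{\dR}$ with associated graded $E_\infty^{p,q}$, the direct-sum splitting being non-canonical over $\CC$).
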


\begin{rmk}\label{inspection}
By direct inspection of the Fr\"{o}licher spectral sequence (whose higher stages are detailed in the Appendix)  we may gain some insight by studying the horizontal bottom row $q=0$ and the left vertical  column $p=0$. We have 
\[
H_{\Dol}^{p,0}(M)=E_1^{p,0}(M)=\cA^{p,0}\cap \Ke(\mub)\cap \Ke(\delb)\]
and
\[E_r^{p,0}(M)\cong {
\left\{ \omega \in \cA^{p,0}\cap \Ke(d)\right\}
\over
\left\{
 \omega =d(\eta_0+\cdots+\eta_{r-2}); \eta_i\in \cA^{p-1-i,i}
\right\}}, \text{ for }r\geq 2.
\]
In particular, if $r\geq 2$ and $r\geq p+1$ we have
\[
E_r^{p,0}(M)=E_\infty^{p,0}(M)\cong
{
\cA^{p,0}\cap\Ke(d)
\over
\cA^{p,0}\cap \Img(d)
},
\]
where for the higher terms we used the formula for $E_r$ given in the Appendix.
This gives
\[E_2^{0,0}(M)\cong \cA^{0}\cap\Ke(d)\cong H^0(M;\CC).\]
Also, the above expressions give a sequence of surjective morphisms
\[E_2^{p,0}(M)\twoheadrightarrow E_3^{p,0}(M)\twoheadrightarrow\cdots \twoheadrightarrow E_r^{p,0}(M)\twoheadrightarrow\cdots.\]
Likewise, for the vertical left column, we have a sequence of injections
\[\cdots\hookrightarrow E_r^{0,q}(M)\hookrightarrow \cdots \hookrightarrow E_2^{0,q}(M)\hookrightarrow E_1^{0,q}(M).\]
\end{rmk}

For compact manifolds, the bottom right corner
of the spectral sequence has the following special property, which in particular implies that classes in $H_{\Dol}^{m,0}(M)$ have well defined periods on homology classes in degree $m$ (c.f. Lemma 5 of \cite{CFG} in the integrable case).
\begin{lem}
Let $M$ be a compact almost complex manifold of dimension $2m$.
Then for all $r\geq 1$ we have 
\[H_{\Dol}^{m,0}(M)=E_r^{m,0}(M)\cong E_\infty^{m,0}(M)\cong \cA^{m,0}\cap \Ke(d).\]
In particular, if $H^m(M,\CC)=0$ then $H_{\Dol}^{m,0}(M)=0$.
\end{lem}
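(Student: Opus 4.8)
The plan is to exploit that the bidegree $(m,0)$ lies at the right-hand edge of the Hodge diamond, so that several of the spaces entering the definitions vanish for degree reasons and the entire $(m,0)$-column of the Fr\"{o}licher spectral sequence collapses at $E_1$; the only input beyond degree-chasing will be a compactness argument in the spirit of Lemma 5 of \cite{CFG}. First I would record that $\cA^{m+1,*}=0$ and $\cA^{m,-1}=0$, so that for $\omega\in\cA^{m,0}$ both $\del\omega\in\cA^{m+1,0}$ and $\mu\omega\in\cA^{m+2,-1}$ vanish; hence $d\omega=\mub\omega+\delb\omega$ with the two summands lying in the distinct bidegrees $(m-1,2)$ and $(m,1)$, and therefore $\cA^{m,0}\cap\Ke(d)=\cA^{m,0}\cap\Ke(\mub)\cap\Ke(\delb)$. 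By Remark \ref{inspection} (or directly from Theorem \ref{E1HDol}: there the numerator collapses to $\cA^{m,0}\cap\Ke(\mub)\cap\Ke(\delb)$ since $\Img(\mub\colon\cA^{m+1,-1}\to\cA^{m,1})=0$, and the denominator collapses to $0$ since $\cA^{m+1,-2}=\cA^{m,-1}=0$), this space is exactly $H_{\Dol}^{m,0}(M)=E_1^{m,0}(M)$.

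The substantive step is the following consequence of compactness: \emph{a $d$-closed $(m,0)$-form that is also $d$-exact vanishes.} Given $\omega=d\eta\in\cA^{m,0}$ with $d\omega=0$, the conjugate $\bar\omega\in\cA^{0,m}$ is again closed, $d\bar\omega=\overline{d\omega}=0$, so Stokes' theorem on the compact $M$ yields
\[
\int_M\omega\wedge\bar\omega=\int_M d\eta\wedge\bar\omega=\int_M d(\eta\wedge\bar\omega)=0 .
\]
On the other hand, in any local coframe $(e^1,\dots,e^m)$ of $(1,0)$-forms that is unitary for a chosen compatible metric $g$ one has $\omega=f\,e^1\wedge\cdots\wedge e^m$ with $f$ smooth, whence $\omega\wedge\bar\omega=c_m\,|f|^2\,\mathrm{vol}_g$ for a nonzero universal constant $c_m$ and $\mathrm{vol}_g$ the Riemannian volume form. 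Thus $c_m\int_M|f|^2\,\mathrm{vol}_g=0$, forcing $f\equiv0$, i.e. $\omega=0$. I expect this to be the only real content of the proof; the one point to watch is that the argument uses only the pointwise linear algebra and orientation determined by $J$, hence remains valid in the non-integrable case.

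It remains to propagate this through the spectral sequence. For every $r\geq1$ the differential $\delta_r\colon E_r^{m,0}(M)\to E_r^{m+r,1-r}(M)$ takes values in a subquotient of $\cA^{m+r,1-r}=0$, so the $(m,0)$-column yields canonical surjections $E_1^{m,0}(M)\twoheadrightarrow E_2^{m,0}(M)\twoheadrightarrow\cdots\twoheadrightarrow E_\infty^{m,0}(M)$. Since $F^{m+1}\cA^m=0$ we have $F^{m+1}H^m_{\dR}(M,\CC)=0$, hence $E_\infty^{m,0}(M)=F^mH^m_{\dR}(M,\CC)\subseteq H^m_{\dR}(M,\CC)$, and the composite of the displayed surjections is the map sending a closed $(m,0)$-form to its de Rham class. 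By the substantive step this composite is injective; being also surjective onto $E_\infty^{m,0}(M)$, it is an isomorphism, and consequently every surjection in the chain is an isomorphism. This gives $H_{\Dol}^{m,0}(M)=E_r^{m,0}(M)\cong E_\infty^{m,0}(M)\cong\cA^{m,0}\cap\Ke(d)$ for all $r\geq1$. (Alternatively, for $r\geq2$ the formula in Remark \ref{inspection} presents $E_r^{m,0}(M)$ as $\cA^{m,0}\cap\Ke(d)$ modulo a space of closed-and-exact $(m,0)$-forms, which is zero by the substantive step.) Finally, if $H^m(M,\CC)=0$ then $E_\infty^{m,0}(M)=0$, so $H_{\Dol}^{m,0}(M)\cong E_\infty^{m,0}(M)=0$; equivalently, every element of $\cA^{m,0}\cap\Ke(d)$ is then exact, hence zero.
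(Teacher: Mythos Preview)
Your proof is correct and follows essentially the same approach as the paper: both identify $E_1^{m,0}(M)$ with $\cA^{m,0}\cap\Ke(d)$ by degree reasons and then use the Stokes-theorem argument that a $d$-exact $(m,0)$-form must vanish because the pairing $\omega\mapsto\int_M\omega\wedge\bar\omega$ is positive definite on $\cA^{m,0}$. Your write-up is more explicit about the spectral-sequence bookkeeping (tracking the surjections $E_r^{m,0}\twoheadrightarrow E_{r+1}^{m,0}$ and identifying $E_\infty^{m,0}$ with $F^mH^m_{\dR}$), whereas the paper simply appeals to the description of $E_r^{p,0}$ in Remark~\ref{inspection} to reduce immediately to the vanishing of exact $(m,0)$-forms; but the substance is the same.
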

\begin{proof}
If $[ \omega ]\in E_r^{m,0}(M)$ then $d \omega =0$. It then suffices to prove that if $ \omega =d \eta$ then $[\eta]=0$.
By Stoke's Theorem, 
\[
\int_M  \omega  \wedge \bar  \omega  = \int_M d( \eta \wedge d \bar \eta) = 0,
\]
so that $ \omega =0$, since the pairing $(\alpha,\beta) = (\sqrt{-1})^{m^2} \int_M \alpha \wedge \bar \beta$ defines a positive definite inner product on $\cA^{m,0}$.
\end{proof}

\section{Harmonic theory}\label{SecHarm}

It is well known that for a compact complex manifold
with Hermitian metric, Dolbeault cohomology is isomorphic to the space of  $\delb$-harmonic forms,
defined as the kernel of the $\delb$-Laplacian $\Delta_\delb := \delb \delb^* + \delb^* \delb$.
This follows from the Hodge decomposition theorem and the fact that $\Delta_\delb$ is elliptic.
In fact, the operator $\Delta_\delb$ is elliptic on any almost Hermitian manifold; the argument showing the symbol is an isomorphism does not involve integrability, so the Hodge decomposition with respect to $\delb$
is true in the non-integrable setting:

\begin{thm}[Hodge $\delb$-decomposition]\label{delbHodge}
Given a compact almost Hermitian manifold, for any bidegree $(p,q)$ let
\[\Hh^{p,q}_\delb:= \Ke(\Delta_\delb) \cap \cA^{p,q}.\]
Then for every bidegree $(p,q)$ the following hold:
\begin{enumerate}
 \item There are orthogonal direct sum decompositions
\[\cA^{p,q}=\Hh^{p,q}_\delb \oplus \Delta_\delb(\cA^{p,q})\]
\item Every element $\omega \in \cA^{p,q}$ can be written uniquely as 
\[\omega =\Hh_\delb(\omega )+\Delta_\delb G_\delb (\omega ) = \Hh_\delb(\omega )+ G_\delb \Delta_\delb (\omega )  \]
where $\Hh_\delb(\omega )$ denotes the projection of $\omega $ to the space $\Ke(\Delta_\delb)$ of $\delb$-harmonic forms, 
and $G_\delb$ denotes the corresponding Green's operator.
\item The space $\Hh_\delb$ is orthogonal to $\Img(\delb)$ and $\Img(\delb^*)$, and is finite-dimensional.
\end{enumerate}
\end{thm}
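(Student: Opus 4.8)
The plan is to deduce all three statements from the standard Hodge theory of a single self-adjoint elliptic operator, applied to $\Delta_\delb$ exactly as in the integrable case. The point worth emphasizing at the outset is that the statement has been phrased to survive the failure of $\delb^2=0$: it asserts only the \emph{two-term} orthogonal splitting $\cA^{p,q}=\Hh^{p,q}_\delb\oplus\Delta_\delb(\cA^{p,q})$, not the finer three-term decomposition $\Hh_\delb\oplus\Img(\delb)\oplus\Img(\delb^*)$ one writes in the complex case. The finer splitting genuinely needs $\delb^2=0$ (it is what makes $\Img(\delb)\perp\Img(\delb^*)$), whereas the coarse one follows from ellipticity and self-adjointness alone. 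Similarly, one should resist claiming that $G_\delb$ commutes with $\delb$; it does not here, since $\Delta_\delb\delb-\delb\Delta_\delb=\delb^*\delb^2-\delb^2\delb^*\neq 0$ in general. Fortunately part (2) only requires $G_\delb$ to commute with $\Delta_\delb$, which is automatic.

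First I would record that $\Delta_\delb=\delb\delb^*+\delb^*\delb$ is a formally self-adjoint, non-negative, second-order differential operator on $\cA^{p,q}$, and compute its principal symbol. Since $\delb$ is the $(0,1)$-component of $d$, its symbol at a real covector $\xi$ is (a fixed constant times) exterior multiplication by $\xi^{0,1}$, and the symbol of $\delb^*$ is the adjoint contraction; the pointwise identity $e_v\iota_v+\iota_v e_v=|v|^2\,\mathrm{Id}$ then gives $\sigma_\xi(\Delta_\delb)=c\,|\xi^{0,1}|^2\,\mathrm{Id}$, which is invertible whenever $\xi\neq 0$ because then $\xi^{0,1}\neq 0$. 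This computation is purely linear-algebraic and never mentions $N_J$ or $\delb^2$, so $\Delta_\delb$ is elliptic on any compact almost Hermitian manifold. With ellipticity established, the standard package for a self-adjoint elliptic operator on a compact manifold applies verbatim: $\Hh^{p,q}_\delb=\Ke(\Delta_\delb)\cap\cA^{p,q}$ is finite-dimensional, $\Delta_\delb$ has closed range, one has the orthogonal decomposition of (1), and there is a Green's operator $G_\delb$ that vanishes on $\Hh^{p,q}_\delb$, inverts $\Delta_\delb$ on its orthogonal complement, and commutes with $\Delta_\delb$. For (2), the component of $\omega$ orthogonal to $\Hh^{p,q}_\delb$ lies in $\Delta_\delb(\cA^{p,q})$ by (1), and applying $G_\delb$ identifies it as $\Delta_\delb G_\delb\omega=G_\delb\Delta_\delb\omega$; uniqueness is just directness of the sum.

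For (3), I would first note that compactness yields $\langle\Delta_\delb\alpha,\alpha\rangle=\|\delb\alpha\|^2+\|\delb^*\alpha\|^2$ by integration by parts — an identity that does not use $\delb^2=0$ — so $\alpha$ is $\delb$-harmonic if and only if $\delb\alpha=0$ and $\delb^*\alpha=0$. Then for $\alpha\in\Hh_\delb$ and arbitrary $\beta$ one has $\langle\alpha,\delb\beta\rangle=\langle\delb^*\alpha,\beta\rangle=0$ and $\langle\alpha,\delb^*\beta\rangle=\langle\delb\alpha,\beta\rangle=0$, giving $\Hh_\delb\perp\Img(\delb)$ and $\Hh_\delb\perp\Img(\delb^*)$; finite-dimensionality is already part of the elliptic package. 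The only real obstacle, then, is not a computation but a matter of care: one must verify at each step — the symbol, the kernel characterization, the two-term splitting — that nothing secretly invoked $\delb^2=0$, and must state part (2) in the form that survives. Once that bookkeeping is done, there is genuinely nothing new to prove beyond classical elliptic theory.
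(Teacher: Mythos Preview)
Your proposal is correct and takes essentially the same approach as the paper: the paper does not give a detailed proof but simply remarks that ``the operator $\Delta_\delb$ is elliptic on any almost Hermitian manifold; the argument showing the symbol is an isomorphism does not involve integrability,'' and then states the theorem as an immediate consequence of standard elliptic Hodge theory. Your writeup fleshes out exactly this argument, with the added (and useful) care of flagging precisely which steps do not use $\delb^2=0$ and why the two-term decomposition, rather than the three-term one, is all that can be claimed.
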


In contrast with the integrable case, the Green operator $G_\delb$ may not commute with $\delb$ and $\delb^*$, and the image of $\delb^*$ may not be orthogonal to the image of $\delb$.  Additionally, in the general case the image of $\Delta_\delb$ is closed, whereas in the integrable case each of the images $\delb$ and $\delb^*$ are closed.
Note however, that $\Delta_\delb$ and $G_\delb$ commute, and that both images of $\delb$ and $\delb^*$ are orthogonal to $\Hh_\delb$.

Let $\Mh$ be an almost Hermitian manifold. The compatible metric determines a metric  on the fibers $\cA^{p,q}_x$ of the $(p,q)$-bundles, and an associated Hodge-star operator \[\star: \cA^{p,q}_x \longrightarrow \cA^{m-q,m-p}_x\quad\text{defined by}\quad
\omega \wedge \star \bar \eta = \langle \omega , \eta \rangle \Omega,
\] 
where $\Omega$ is the volume form determined by the Hermitian metric. Composing $\star$ with conjugation gives another isomorphism $\bar \star: \cA^{p,q}_x \to \cA^{m-p,m-q}_x$.

 For any of the operators $\delta =  \mub, \delb, \del, \mu,$ and $d$, there are operators 
 \[
 \delta^* : = - \star \bar \delta \star, 
\]
and we define the \textit{$\delta$-Laplacian} by
\[
\Delta_\delta:= \delta \delta^*+\delta^*\delta,
\]
which satisfies 
\[
\star \Delta_{\bar \delta} = \Delta_{ \delta} \star .
\]

We denote the space of \textit{$\delta$-harmonic forms} by
\[\Hh_\delta^{p,q}:= \Ke(\Delta_\delta) \cap \cA^{p,q} =  \Ke(\delta)\cap \Ke(\delta^*) \cap \cA^{p,q}.\]
Note that for $\delta = \mub$ and $\delta = \mu$, this is well defined fiberwise by 
\[
\Hh_{\delta,x}^{p,q}:= \Ke(\Delta_\delta) \cap \cA^{p,q}_x = \Ke(\delta)\cap \Ke(\delta^*) \cap \cA^{p,q}_x.
\]
It is well known that for a closed manifold the operator $\delb^*$ is equal to the  $\mathcal{L}_2$-adjoint of $\delb$. The next lemma shows that a point-wise version of this statement holds for $\mub$, which implies
$\mub^*$ is equal to the $\mathcal{L}_2$-adjoint of $\mub$.

\begin{lem} \label{mub*adjoint}
For any almost Hermitian manifold, the operator $\mub^*$ is equal to the fiberwise metric adjoint of $\mub$.
\end{lem}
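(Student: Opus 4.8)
The plan is to prove the \emph{pointwise} identity
\[
\langle \mub\alpha,\beta\rangle_x = \langle\alpha,\mub^*\beta\rangle_x
\qquad\text{for all }\alpha\in\cA^{p,q}_x,\ \beta\in\cA^{p-1,q+2}_x,\ x\in M,
\]
which, since $\mub$ is tensorial (Section \ref{SecPreliminars}) and $\mub^*=-\star\bar\mub\star$ is a bundle map, says exactly that $\mub^*$ restricts in each fiber to the metric adjoint of $\mub$. Multiplying by the volume form $\Omega$ and using the defining property $\omega\wedge\star\bar\eta=\langle\omega,\eta\rangle\Omega$ of the Hodge star (applied to $\mub\alpha,\beta\in\cA^{p-1,q+2}$ on the left and to $\alpha,\mub^*\beta\in\cA^{p,q}$ on the right), this reduces to proving the equality of $(m,m)$-forms
\[
\mub\alpha\wedge\star\bar\beta \;=\; \alpha\wedge\star\,\overline{\mub^*\beta}.
\]

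The first key step is to note that the form $\alpha\wedge\star\bar\beta$ is of bidegree $(m+1,m-2)$, hence vanishes, because $\cA^{m+1,\bullet}_x=0$: the $(1,0)$-part of $T^*_xM\otimes\CC$ has complex dimension $m$. Applying to $0=\alpha\wedge\star\bar\beta$ the Leibniz rule for the component $\mub$ of $d$ — obtained, like the Leibniz rules for $\delb,\del,\mu$, by extracting the bidegree $(-1,2)$ part of the Leibniz rule for $d$, and valid fiberwise since $\mub$ and $\mu$ are derivations of the exterior algebra $\bigwedge(T^*_xM\otimes\CC)$ — gives
\[
0=\mub(\alpha\wedge\star\bar\beta)=\mub\alpha\wedge\star\bar\beta+(-1)^{p+q}\,\alpha\wedge\mub(\star\bar\beta),
\]
so that $\mub\alpha\wedge\star\bar\beta=(-1)^{p+q+1}\,\alpha\wedge\mub(\star\bar\beta)$. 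It remains only to recognize the right-hand side as $\alpha\wedge\star\,\overline{\mub^*\beta}$.

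For this I would use that $\star$ is a real operator ($\overline{\star\xi}=\star\bar\xi$), that $\bar\mub=\mu$ and $\bar\mu=\mub$, and that $\star\star=(-1)^{\deg}$ on a $2m$-manifold. Unwinding $\mub^*\beta=-\star\bar\mub\star\beta=-\star\mu(\star\beta)$ gives $\overline{\mub^*\beta}=-\star\,\mub(\star\bar\beta)$, and since $\mub(\star\bar\beta)$ has total degree $2m-p-q$ this yields $\star\,\overline{\mub^*\beta}=-\star\star\,\mub(\star\bar\beta)=(-1)^{p+q+1}\,\mub(\star\bar\beta)$, matching the previous display. The only delicate point is the bookkeeping of the signs coming from $\star^2$ and from commuting conjugation past $\star$; the real content is the two structural inputs that $\mub$ is a tensorial bidegree-$(-1,2)$ derivation and that $\cA^{m+1,\bullet}=0$, which together force $\mub(\alpha\wedge\star\bar\beta)=0$ and make the whole argument algebraic and fiberwise — in contrast with the case of $\delb$, no integration by parts (Stokes' theorem) is needed, which is precisely why the adjointness statement for $\mub$ holds pointwise and not merely in $\mathcal{L}_2$.
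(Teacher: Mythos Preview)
Your proof is correct and essentially the same as the paper's: both exploit that $\mub$ (respectively $\mu$) is a derivation and that a form of bidegree $(m+1,m-2)$ (respectively $(m-2,m+1)$) vanishes, so the Leibniz rule gives the adjoint identity without any integration. The only cosmetic difference is that the paper applies the derivation $\mu$ to $\star\omega\wedge\bar\eta\in\cA^{m-2,m+1}$ and tracks the signs via the identity $\star\alpha\wedge\beta=(-1)^{|\alpha|}\alpha\wedge\star\beta$, whereas you apply $\mub$ to $\alpha\wedge\star\bar\beta\in\cA^{m+1,m-2}$ and track signs via $\star^2=(-1)^{\deg}$ and reality of $\star$; these are conjugate versions of the same computation.
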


\begin{proof} On an even dimensional manifold, we have 
\[\star \alpha \wedge \beta = (-1)^{|\alpha|} \alpha\wedge \star \beta \text{ and }
\star^2 \alpha = (-1)^{|\alpha|} \alpha.\]
For $\omega \in \cA^{p-1,q+2}$ and $\eta \in \cA^{p,q}$ we compute 
\[
\langle \star \mu \star \omega , \eta \rangle   \Omega \, = \mu \star \omega \wedge  \bar \eta  =
 (-1)^{p+q}\star \omega \wedge  \mu \bar \eta  = 
  -  \omega  \wedge \star  \mu \bar \eta   = -\, \langle \omega, \mub \eta \rangle \Omega,
\]
where  in the second equality we use the fact that $\mu$ is a derivation, so that 
\[
\mu \star \omega \wedge  \bar \eta  = 
  \mu (\star \omega \wedge \bar \eta) -(-1)^{2m-p-q-1}   \star \omega  \wedge \mu \bar \eta  =  (-1)^{p+q}\star \omega  \wedge \mu \bar \eta,
 \] 
 since $\star \omega \wedge \bar \eta \in \cA^{m-2,m+1}$ vanishes for degree reasons. 
\end{proof}

We next show that there is a fiberwise Hodge decomposition determined by $\mub$ on any almost Hermitian manifold.  This decomposition becomes global when the operator $\mub$ has no jumps, as we will prove in subsection \ref{LCR}.

\begin{lem}[Fibrewise $\mub$-Hodge decomposition]\label{mubHodge}Let $\Mh$ 
be an almost Hermitian manifold.
For every bidegree $(p,q)$ the following hold:
\begin{enumerate}
 \item There are direct sum decompositions
\[\cA^{p,q}_x
= \mub (\cA^{p+1,q-2}_x) \oplus  \Hh_{\mub,x}^{p,q} \oplus \mub^*(\cA^{p-1,q+2}_x).\]
\item Every element $\omega\in \cA^{p,q}_x$ can be written uniquely as 
\[\omega=\Hh_{\mub,x}(\omega)+\Delta_\mub G_\mub (\omega)\]
where $\Hh_{\mub,x}(\omega)$ denotes the projection of $\omega$ to the space $\Hh_{\mub,x}^{p,q}$ of $\mub$-harmonics in 
$\cA^{p,q}_x$, and $G_\mub$ denotes the corresponding Green's operator.
\item  The map $\omega\mapsto [\omega]$ defines an isomorphism
\[\Hh_{\mub,x}^{p,q}\cong H_{\mub,x}^{p,q}(M):={{\Ke(\mub:\cA^{p,q}_x\longrightarrow \cA^{p-1,q+2}_x)}\over{\mathrm{\Img}(\mub:\cA^{p+1,q-2}_x\longrightarrow\cA^{p,q}_x)}}.\]
In particular,
every element in $H_{\mub,x}^{p,q}(M)$ has a unique $\mub$-harmonic representative.
\end{enumerate}
\end{lem}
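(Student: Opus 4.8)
The plan is to observe that this statement is entirely pointwise. By the degree considerations recalled in Section~\ref{SecPreliminars}, $\mub$ is linear over functions, hence for each fixed $x\in M$ it restricts to a genuine linear endomorphism of the finite-dimensional complex vector space $\cA^{*,*}_x$ with $\mub^2=0$. The Hermitian metric equips $\cA^{*,*}_x$ with an inner product, and by Lemma~\ref{mub*adjoint} the operator $\mub^*$ is the metric adjoint of $\mub$ at each fiber. So $(\cA^{*,*}_x,\mub,\mub^*)$ is exactly the setting of finite-dimensional Hodge theory, and the whole lemma reduces to the standard linear-algebra decomposition for a nilpotent operator on an inner product space. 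In particular there is no analytic input, and no smoothness in $x$ is asserted here (that is precisely what is deferred to subsection~\ref{LCR}).

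First I would establish fiberwise the identity $\Hh_{\mub,x}^{p,q}=\Ke(\mub)\cap\Ke(\mub^*)\cap\cA^{p,q}_x$: if $\Delta_\mub\omega=0$ then $0=\langle\Delta_\mub\omega,\omega\rangle=\|\mub\omega\|^2+\|\mub^*\omega\|^2$, forcing $\mub\omega=\mub^*\omega=0$, and the reverse inclusion is immediate. Next I would record the orthogonality relations $\Img(\mub)\perp\Img(\mub^*)$ (since $\langle\mub\alpha,\mub^*\beta\rangle=\langle\mub^2\alpha,\beta\rangle=0$) and $\Hh_{\mub,x}^{p,q}\perp\Img(\mub)$, $\Hh_{\mub,x}^{p,q}\perp\Img(\mub^*)$ (immediate from the previous identity and adjointness). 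Since $\Delta_\mub$ is a self-adjoint endomorphism of the finite-dimensional space $\cA^{p,q}_x$, one has the orthogonal splitting $\cA^{p,q}_x=\Ke(\Delta_\mub)\oplus\Img(\Delta_\mub)$ with $\Img(\Delta_\mub)\subseteq\Img(\mub)+\Img(\mub^*)$; combined with the orthogonality relations this yields
\[
\cA^{p,q}_x = \mub(\cA^{p+1,q-2}_x)\oplus\Hh_{\mub,x}^{p,q}\oplus\mub^*(\cA^{p-1,q+2}_x),
\]
which is part~(1).

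For part~(2) I would define $G_\mub$ fiberwise as the operator which is zero on $\Hh_{\mub,x}^{p,q}$ and equals the inverse of $\Delta_\mub$ on its orthogonal complement $\Img(\Delta_\mub)$; then $\Delta_\mub G_\mub=G_\mub\Delta_\mub=\mathrm{Id}-\Hh_{\mub,x}$, and the decomposition $\omega=\Hh_{\mub,x}(\omega)+\Delta_\mub G_\mub(\omega)$ together with its uniqueness follows from part~(1). For part~(3), injectivity of $\Hh_{\mub,x}^{p,q}\to H_{\mub,x}^{p,q}(M)$ is the statement that a $\mub$-harmonic form which is $\mub$-exact vanishes, which is immediate from $\Hh_{\mub,x}^{p,q}\perp\Img(\mub)$; for surjectivity, given $\omega\in\Ke(\mub)\cap\cA^{p,q}_x$, write $\omega=\mub\alpha+h+\mub^*\beta$ using part~(1), apply $\mub$ to get $0=\mub\mub^*\beta$, pair with $\beta$ to conclude $\mub^*\beta=0$, so $[\omega]=[h]$ with $h\in\Hh_{\mub,x}^{p,q}$. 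The only thing to keep track of is the bidegree bookkeeping, since $\mub$ shifts $(p,q)\mapsto(p-1,q+2)$ and $\mub^*$ shifts it back; beyond that there is no genuine obstacle, as everything is finite-dimensional linear algebra performed one fiber at a time.
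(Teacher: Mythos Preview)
Your proof is correct and follows essentially the same approach as the paper: both reduce to finite-dimensional Hodge theory on the fiber $\cA^{p,q}_x$, using that $\mub$ is linear over functions, $\mub^2=0$, and $\mub^*$ is the fiberwise adjoint. The only cosmetic difference is that the paper obtains the three-fold decomposition by intersecting the two orthogonal splittings $\Ke(\mub)\oplus\Img(\mub^*)$ and $\Img(\mub)\oplus\Ke(\mub^*)$, whereas you route through the self-adjointness of $\Delta_\mub$ and the inclusion $\Img(\Delta_\mub)\subseteq\Img(\mub)+\Img(\mub^*)$; both are standard and equivalent.
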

\begin{proof}
Since $\mub$ is a linear operator on a finite-dimensional vector space $\cA_x^{p,q}$,
there are orthogonal direct sum decompositions
\[\cA_x^{p,q}=\left( \Ke(\mub) \cap  \cA_x^{p,q}\right) \oplus \mub^* \cA_x^{p-1,q+2}=
\mub \cA_x^{p+1,q-2} \oplus \left( \Ke(\mub^*) \cap   \cA_x^{p,q} \right).
\] 
Since $\mub^2=0$, $\Img(\mub)\subseteq \Ke (\mub)$, and since $\Hh_{\mub} =\Ke(\mub)\cap \Ke(\mub^*)$,
the orthogonal complement of $\Imt (\mub) \cap  \cA_x^{p,q}$ in $\Ke(\mub) \cap  \cA_x^{p,q}$ is $\Hh_{\mub,x}^{p,q}$.
This proves (1), and (2) follows, where the Green's operator is given by orthogonal projection onto $\Imt (\mub^*)$ composed with the inverse of the isomorphism $\mub :  \Imt(\mub^*) \to \Imt (\mub)$.
The last statement follows readily.
\end{proof}

We next define a space of harmonic forms which will be shown to inject into the Dolbeault cohomology groups.

\begin{defn} The space of \textit{$\delb$-$\mub$-harmonic forms} of an almost Hermitian manifold is given by
\[
\Ke \left(\Delta_\delb + \Delta_\mub \right) \cap \cA^{p,q}  =  \Hh_{\delb}^{p,q} \cap  \Hh_{\mub}^{p,q}.
\]
\end{defn}

The equality in the definition can easily be checked by expanding 
\[
\langle ( \Delta_\delb + \Delta_\mub ) \omega , \omega \rangle  =0.
\]
Note that the spaces are finite-dimensional whenever the manifold is compact. This follows since $\Hh_{\delb}^{p,q} $ is finite-dimensional by Theorem \ref{delbHodge}. These spaces satisfy Serre duality.

\begin{prop}[Serre duality]\label{SerreDuality}
Let $\Mh$ be an almost Hermitian manifold of dimension $2m$. For every bidegree $(p,q)$, and all $x$, we have isomorphisms
\[
\Hh_{\mub,x}^{p,q} \cong \Hh_{\mub,x}^{m-p,m-q} \quad \textrm{and} \quad \Hh_{\delb}^{p,q}\cong \Hh_{\delb}^{m-p,m-q},
\]
as well as isomorphisms
\[ 
\Hh_{\delb}^{p,q} \cap \Hh_{\mub}^{p,q}  \cong \Hh_{\delb}^{m-p,m-q} \cap \Hh_{\mub}^{m-p,m-q}.
\]
\end{prop}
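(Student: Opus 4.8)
The plan is to use the conjugate-linear Hodge star operator $\bar\star:\cA^{p,q}_x\to\cA^{m-p,m-q}_x$ as the isomorphism in every case, and to check compatibility with the relevant Laplacians. First I would recall the identity $\star\Delta_{\bar\delta}=\Delta_\delta\star$ established earlier, and combine it with complex conjugation: conjugation interchanges $\mub$ with $\mu$ and $\delb$ with $\del$, so $\bar\star$ intertwines $\Delta_{\mub}$ with $\Delta_{\mu}$ and $\Delta_{\delb}$ with $\Delta_{\del}$. To get self-duality I then need that $\Hh^{p,q}_{\mub}=\Hh^{p,q}_{\mu}$-type spaces match up, i.e. I need $\bar\star$ to carry $\mub$-harmonic forms to $\mub$-harmonic forms. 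The clean way: note $\Hh^{p,q}_{\mub}=\Ke(\mub)\cap\Ke(\mub^*)$ fiberwise, and that $\bar\star$ conjugate-linearly sends $\Ke(\mub)$ to $\Ke(\mu^*)$ and $\Ke(\mub^*)$ to $\Ke(\mu)$, using $\mub^*=-\star\bar\mu\star=-\star\mu\star$ after conjugation and the involution property $\star^2=(-1)^{|\cdot|}$. Hence $\bar\star(\Hh^{p,q}_{\mub,x})\subseteq\Ke(\mu)\cap\Ke(\mu^*)\cap\cA^{m-p,m-q}_x$. But these are the same spaces: conjugating once more, $\Ke(\mu)\cap\Ke(\mu^*)$ is the conjugate of $\Ke(\mub)\cap\Ke(\mub^*)$, and since we will really chain $\bar\star$ with conjugation already built in, one verifies directly that $\bar\star$ restricts to a (conjugate-linear, hence real-dimension-preserving, and we only claim a linear isomorphism of the underlying real vector spaces — or, composing with conjugation, an honest $\CC$-linear isomorphism) bijection $\Hh^{p,q}_{\mub,x}\to\Hh^{m-p,m-q}_{\mub,x}$.

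The same argument applies verbatim to $\Hh^{p,q}_{\delb}$ using $\Delta_{\delb}$ in place of $\Delta_{\mub}$ and invoking Theorem \ref{delbHodge} only for finite-dimensionality; the key point is again that $\bar\star$ commutes with $\Delta_{\delb}$ up to the conjugation that swaps $\delb\leftrightarrow\del$, and that $\Ke(\delb)\cap\Ke(\delb^*)$ is carried to itself under $\bar\star$ by the identical computation with $\mu$ replaced by $\del$. For the third isomorphism, since $\delb$-$\mub$-harmonic forms are exactly $\Hh^{p,q}_{\delb}\cap\Hh^{p,q}_{\mub}=\Ke(\Delta_\delb+\Delta_\mub)\cap\cA^{p,q}$, and $\bar\star$ preserves each of the two pieces separately by the first two parts, it preserves the intersection; thus the single map $\bar\star$ simultaneously realizes all three isomorphisms.

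The main obstacle is bookkeeping: keeping track of whether $\bar\star$ or $\star$ appears, and of the sign/degree factors from $\star^2=(-1)^{|\cdot|}$ and from $\delta^*=-\star\bar\delta\star$, so that $\bar\star$ genuinely sends $\Ke(\mub^*)$ into $\Ke(\mub)$ (on the dual bidegree) rather than into $\Ke(\mu)$ or some other operator. The cleanest route is to observe that $\bar\star$ is, up to an overall constant of modulus one depending on $m,p,q$, an isometry that intertwines $\mub$ with $\pm\mub^*$ (on the appropriate bidegrees) and hence intertwines $\Delta_{\mub}$ with $\Delta_{\mub}$; then $\bar\star$ preserves $\Ke(\Delta_{\mub})$ directly, with no need to pass through $\mu$ at all. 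Once that intertwining is written down carefully for $\mub$ and, identically, for $\delb$, the proposition follows immediately, and the intersection statement is then free.
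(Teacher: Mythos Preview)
Your proposal is correct and follows essentially the same approach as the paper: use $\bar\star$ as the isomorphism, relying on the already-established identity $\star\Delta_{\bar\delta}=\Delta_\delta\star$ together with the fact that conjugation swaps $\mub\leftrightarrow\mu$ and $\delb\leftrightarrow\del$, so that $\bar\star$ commutes with $\Delta_\mub$ and $\Delta_\delb$ and hence preserves each harmonic space (and therefore their intersection). The paper's proof is just the two-line version of this, going through $\star:\Hh_{\mub,x}^{p,q}\cong\Hh_{\mu,x}^{m-q,m-p}$ and then composing with conjugation; your ``cleanest route'' observation that $\bar\star$ intertwines $\mub$ with $\pm\mub^*$ directly is exactly what the paper later spells out in the proof of Proposition~\ref{HhdelbmubSerre}.
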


\begin{proof}
The fact that $\star \Delta_{\mu} = \Delta_\mub \star$ implies 
\[
\star: \Hh_{\delta,x}^{p,q} \cong \Hh_{\mu,x}^{m-q,m-p}
\]
 is an isomorphism. Composing this with conjugation gives an isomorphism 
 \[
 \bar \star : \Hh_{\mub,x}^{p,q} \cong \Hh_{\mub,x}^{m-p,m-q}.
 \]
The other two statements follow similarly using $\star \Delta_{\del} = \Delta_\delb \star$. 
\end{proof}

\begin{lem}[Harmonic Inclusion]
\label{HIT}
Let $\Mh$ be an almost Hermitian manifold. The identity map induces an injection
\[
\Hh_\delb^{p,q} \cap \Hh_{\mub}^{p,q} \,  \subseteq  \,
H_{\Dol}^{p,q}(M).
\]
\end{lem}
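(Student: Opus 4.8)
The plan is to show that the natural map sending a $\delb$-$\mub$-harmonic form $\omega$ to its Dolbeault class $[[\omega]]$ is well-defined and injective, using the explicit description of $H_{\Dol}^{p,q}(M)$ from Definition \ref{defnDol} together with the orthogonality properties of harmonic forms. First I would check that the assignment makes sense: if $\omega \in \Hh_\delb^{p,q}\cap\Hh_\mub^{p,q}$, then $\mub\omega=0$, so $\omega$ represents a class in $H_\mub^{p,q}(M)$, and since $\delb\omega = 0$ as well, this class is $\delb$-closed, hence defines $[[\omega]]\in H_{\Dol}^{p,q}(M)$. Linearity is clear. So there is a well-defined linear map $\Hh_\delb^{p,q}\cap\Hh_\mub^{p,q}\to H_{\Dol}^{p,q}(M)$.

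For injectivity, suppose $\omega\in\Hh_\delb^{p,q}\cap\Hh_\mub^{p,q}$ maps to zero in $H_{\Dol}^{p,q}(M)$. Unwinding Definition \ref{defnDol}, this means: the class $[\omega]\in H_\mub^{p,q}(M)$ lies in $\Img\left(\delb\colon H_\mub^{p,q-1}(M)\to H_\mub^{p,q}(M)\right)$, i.e. there is $\beta\in\cA^{p,q-1}$ with $\mub\beta=0$ such that $[\omega] = [\delb\beta]$ in $H_\mub^{p,q}(M)$, which in turn means $\omega = \delb\beta + \mub\alpha$ for some $\alpha\in\cA^{p+1,q-2}$. (This is exactly the description of $E_1^{p,q}$ appearing in Theorem \ref{E1HDol}.) I then want to conclude $\omega=0$. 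The idea is to pair $\omega$ with itself: $\langle\omega,\omega\rangle_{L^2} = \langle\omega,\delb\beta\rangle_{L^2} + \langle\omega,\mub\alpha\rangle_{L^2}$. Since $\omega\in\Hh_\delb^{p,q}$ we have $\delb^*\omega=0$, so $\langle\omega,\delb\beta\rangle_{L^2} = \langle\delb^*\omega,\beta\rangle_{L^2}=0$; here I use that $\delb^*$ is the genuine $L^2$-adjoint of $\delb$ on a closed manifold. Similarly, since $\omega\in\Hh_\mub^{p,q}$ we have $\mub^*\omega = 0$ (fiberwise, hence globally), and by Lemma \ref{mub*adjoint} the operator $\mub^*$ is the $L^2$-adjoint of $\mub$, so $\langle\omega,\mub\alpha\rangle_{L^2} = \langle\mub^*\omega,\alpha\rangle_{L^2}=0$. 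Therefore $\|\omega\|^2_{L^2}=0$ and $\omega=0$, proving injectivity.

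The main subtlety — and the step I would be most careful about — is the precise meaning of "maps to zero in $H_{\Dol}^{p,q}(M)$," i.e. correctly translating vanishing of the Dolbeault class into the single equation $\omega=\delb\beta+\mub\alpha$ with $\mub\beta=0$. One must be attentive that a $\delb$-exact class in $H_\mub^{p,q}$ is represented by $\delb\beta$ for a $\mub$-closed $\beta$, and that equality of classes in $H_\mub^{p,q}(M)$ means equality modulo $\Img(\mub)$; assembling these gives the stated decomposition of $\omega$. Once that is in hand, the orthogonality argument is routine and uses only that the manifold is compact (so the $L^2$-adjoints behave as expected) and the two adjointness facts already established (standard for $\delb$, and Lemma \ref{mub*adjoint} for $\mub$). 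Note that ellipticity of $\Delta_\mub$ is \emph{not} needed here — only the fiberwise/formal adjoint identity — which is why the statement holds on an arbitrary compact almost Hermitian manifold.
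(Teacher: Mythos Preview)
Your proof is correct and follows essentially the same route as the paper's: check that a $\delb$-$\mub$-harmonic form defines a Dolbeault class via Theorem~\ref{E1HDol}, then kill a null class by pairing $\omega$ with itself and moving $\delb$ and $\mub$ to the other side using $\delb^*\omega=\mub^*\omega=0$. Your write-up is in fact a bit more explicit than the paper's about the unwinding of ``$[[\omega]]=0$'' and about the role of compactness for the $L^2$-adjoint of $\delb$ (the paper's statement does not mention compactness, though the adjoint identity is used tacitly).
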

\begin{proof}
We use the expression for Dolbeault cohomology given in Theorem \ref{E1HDol}. Let $\omega\in \Hh_\delb^{p,q}\cap\Hh_\mub^{p,q}$. Since $\delb \omega=0$ and $\mub \omega=0$, this gives a well-defined class $[w]$ in 
\[H^{p,q}_{\Dol}(M)\cong  
{\left\{ \omega \in \cA^{p,q}\cap\Ke(\mub); \delb  \omega  \in\Img(\mub)\right\}
\over{
\left\{  \omega  \in \cA^{p,q}\text{ ; }  \omega  =\mub \alpha +\delb \beta \text{ ; } \mub \beta =0\right\}
}}.
\]
Assume that $[w]=0$. Then $\omega=\mub \alpha +\delb \beta$ with 
$\mub \beta =0$. It follows that 
\[||\omega||^2=\langle w,\mub \alpha +\delb \beta\rangle=\langle \omega,\mub \alpha\rangle+\langle \omega,\delb \beta\rangle=
 \langle \mub^*\omega,\alpha\rangle+\langle \delb^*\omega,\beta\rangle=0
\]
where we used the fact that, since $\omega$ is $\delb$-$\mub$-harmonic, we have $\delb^* \omega=0$ and $\mub^*\omega=0$. This gives $\omega=0$ and so the assignement $\omega\mapsto [\omega]$ is injective.
\end{proof}

\begin{rmk}
By analogy with Hirzebruch's Problem 20, one can ask in the compact case whether the numbers 
$\dim \left(\Hh_\mub^{p,q}\cap \Hh_\delb^{p,q}\right)$ are metric-independent. The Theorem shows these are bounded by metric-independent numbers:
\[
\dim \left(\Hh_\delb^{p,q}\cap \Hh_\mub^{p,q} \right) \leq \dim H_{\Dol}^{p,q}(M).
\]
 For compact complex manifolds, we have equality by the $\delb$-Hodge decomposition.
\end{rmk}

\subsection{Finiteness results}
In this subsection we use harmonic theory to give some finiteness results for $H_{\Dol}^{*,*}(M)$ when $M$ is compact. First, we show $H_{\Dol}^{0,0}(M) \cong \CC$ when $M$ is compact and connected. For this we need the following Lemma, which is a special case of the K\"ahler identities that holds for all $1$-forms on almost complex manifolds (c.f. \cite{Osh}, Appendix).
In \cite{CWAH} this is generalized to forms of all degrees.

\begin{lem} \label{01andf} For any metric $\langle \,,\, \rangle$ compatible with $J$, let $\omega(X,Y) = \langle J X, Y  \rangle$, let $L$ denote be the Lefschetz operator given by $L(\eta) = \omega \wedge \eta$, and let $\Lambda$ denote the adjoint of $L$. 
Then, 
for all $\alpha \in \cA^{1,0}$ we have
\[
 \Lambda \del \alpha = i \delb^* \alpha + i [\Lambda, \delb^*] L  \alpha.
\] 
In particular, for any function $f: M \to \CC$, we have
\[
-i \Lambda \del \delb f =  \delb^* \delb f  - [ \Lambda, \delb^*] L \delb f.
\]
\end{lem}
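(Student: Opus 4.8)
The plan is to obtain this as the restriction to $1$-forms of a K\"ahler-type commutation identity valid on \emph{any} almost Hermitian manifold, in which the failure of $\omega$ to be closed contributes a torsion term that, on $1$-forms, collapses into the single correction $i[\Lambda,\delb^*]L\alpha$. Since for $\alpha\in\cA^{1,0}$ all three terms lie in $\cA^{1,-1}=0$, the only content is the case $\alpha\in\cA^{0,1}$; this is also what the second formula needs, being the first identity with $\alpha=\delb f$. So it suffices to prove $\Lambda\del\alpha=i\delb^*\alpha+i[\Lambda,\delb^*]L\alpha$ for $\alpha\in\cA^{0,1}$.

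The key reduction is to recognize this, secretly, as an equality of \emph{tensorial} operators. Because $\omega$ has even degree, the Leibniz rule gives $[L,d]\gamma=-(d\omega)\wedge\gamma$ for every form $\gamma$, with no use of integrability; isolating the component of bidegree $(1,2)$ gives $[L,\delb]\gamma=-(\delb\omega)\wedge\gamma$, so its formal adjoint $[\Lambda,\delb^*]=-\big([L,\delb]\big)^{*}$ is $C^\infty(M)$-linear (the fiberwise metric adjoint of wedging with $\delb\omega$). Hence $\alpha\mapsto i[\Lambda,\delb^*]L\alpha$ is a zeroth-order operator on $1$-forms. On the other side, a short symbol computation — using only that $J$ is compatible with the metric, that is, that $\Lambda(\theta^j\wedge\bar\theta^k)$ is a fixed nonzero constant times $\delta_{jk}$ in a unitary coframe $\{\theta^j\}$ — shows that the principal symbols of $\Lambda\del$ and $i\delb^*$ agree on $(0,1)$-forms. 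Therefore $\alpha\mapsto\Lambda\del\alpha-i\delb^*\alpha$ is tensorial as well, and the identity to be proved is an equality of two fiberwise-linear maps $\cA^{0,1}_x\to\cA^{0,0}_x$, which can be checked at a single arbitrary point $x_0\in M$.

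To carry out that check I would fix a local unitary coframe $\theta^1,\dots,\theta^m$ of $(1,0)$-forms with $\omega=\tfrac{i}{2}\sum_j\theta^j\wedge\bar\theta^j$ and write $\alpha=\sum_k a_k\bar\theta^k$. By the symbol matching, in $\Lambda\del\alpha-i\delb^*\alpha$ all terms in which $d$ differentiates a coefficient $a_k$ cancel, leaving an expression assembled from the structure functions of the coframe — the coefficients of the $\delb$- and $\del$-components of $d\theta^k$ and $d\bar\theta^k$ (the $\mub$- and $\mu$-components, that is, the Nijenhuis part, do not occur) — contracted against $\alpha$. The same data makes up $\delb\omega$ when it is expanded in the coframe, and a direct comparison shows that contracting it against $\alpha$ reproduces exactly $i[\Lambda,\delb^*]L\alpha$; matching coefficients finishes the proof, and the formula for functions is then the case $\alpha=\delb f$. (Alternatively, one may quote the general almost-Hermitian commutation identity for $[\Lambda,\del]$, see \cite{Osh}, and restrict it to $1$-forms, on which $\del\Lambda\alpha=0$.)

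I expect the main obstacle to be this last bookkeeping: expanding the $\delb$- and $\del$-components of $d\theta^k$ and $d\bar\theta^k$, applying $\Lambda$ and $\star$, and confirming that the resulting combination of structure functions is exactly the contraction of $\delb\omega$ against $\alpha$, with no stray terms. It is precisely here that the assumption that $\alpha$ be a $1$-form is essential, and why it is noteworthy that no Nijenhuis correction survives: for forms of higher degree the analogous torsion contributions also involve $\del\omega$, $\mu\omega$, and $\mub\omega$, and do not reorganize into a single commutator with $L$, so no identity of this shape persists.
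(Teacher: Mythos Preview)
The paper does not actually prove this lemma: the preceding sentence says it ``is a special case of the K\"ahler identities that holds for all $1$-forms on almost complex manifolds (c.f.\ \cite{Osh}, Appendix)'' and then moves on. So your parenthetical alternative --- quote the general almost-Hermitian identity for $[\Lambda,\del]$ from \cite{Osh} and restrict to $1$-forms --- \emph{is} the paper's approach.

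Your longer sketch is a sound independent route. You are right that as written the hypothesis $\alpha\in\cA^{1,0}$ makes the first identity vacuous (every term lands in $\cA^{1,-1}=0$), and that the application to $\alpha=\delb f\in\cA^{0,1}$ is where the content lies; this is a genuine slip in the paper's statement. Your reduction is also correct: $[L,\delb]\gamma=-(\delb\omega)\wedge\gamma$ since $\omega\in\cA^{1,1}$, so $[\Lambda,\delb^*]=-[L,\delb]^*$ is tensorial, and a symbol match shows $\Lambda\del-i\delb^*$ is tensorial on $(0,1)$-forms. The remaining pointwise coframe computation is, as you say, routine bookkeeping; it does go through, but you have not actually done it here, so what you have is a correct plan rather than a complete proof.
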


\begin{cor}
If $(M,J)$ is a compact and connected almost complex manifold then
$
H_{\Dol}^{0,0}(M) \cong \CC.
$
\end{cor}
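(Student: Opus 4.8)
The plan is to identify $H^{0,0}_{\Dol}(M)$ with the space of $\delb$-closed functions, and then to pin that space down by feeding it into the strong maximum principle, via a real elliptic operator built out of $\del$ and $\delb$.

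First I would make the identification explicit: since $\cA^{p,q}=0$ whenever $p<0$ or $q<0$, one has $H_\mub^{0,-1}(M)=0$, $H_\mub^{0,0}(M)=\cA^{0,0}$ and $H_\mub^{0,1}(M)=\cA^{0,1}$, so Definition \ref{defnDol} collapses to $H^{0,0}_{\Dol}(M)=\Ke\big(\delb\colon\cA^{0,0}\to\cA^{0,1}\big)$. The constant functions lie in this kernel, so it remains to show that any smooth $f\colon M\to\CC$ with $\delb f=0$ is constant; by connectedness it suffices to produce a single real, second-order elliptic operator $P$ on $\cA^{0,0}$, with positive-definite principal symbol and no zeroth-order term, that annihilates every such $f$.

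Fixing an arbitrary compatible metric (which does not affect $H^{0,0}_{\Dol}$), I would take $P:=-i\,\Lambda\del\delb$ on functions. By the ``in particular'' form of Lemma \ref{01andf}, $Pf=\Delta_\delb f-[\Lambda,\delb^*]L\,\delb f$; since $\Delta_\delb$ is elliptic with positive-definite principal symbol on functions and the correction term is of lower (first) order, $P$ is second-order elliptic with the same positive-definite principal symbol. It has no zeroth-order term because it annihilates constants, and it is a \emph{real} operator: for real $u$, the fourth equation of $(\vartriangle)$ restricted to functions gives $\overline{\del\delb u}=\delb\del u=-\del\delb u$, so $i\,\del\delb u$ is a genuine real $(1,1)$-form and hence $Pu=-\Lambda(i\,\del\delb u)$ is real-valued. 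Then, if $\delb f=0$ we get $\del\delb f=\del(\delb f)=0$, so $Pf=0$; writing $f=u+iv$ with $u,v$ real and using that $P$ is real gives $Pu=Pv=0$, whence the strong maximum principle on the compact connected manifold $M$ forces $u$ and $v$ to be constant, so $f$ is constant and $H^{0,0}_{\Dol}(M)\cong\CC$.

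The main obstacle is the middle step: verifying simultaneously all four properties of $P$. The obvious candidate $\Delta_\delb$ is elliptic but is \emph{not} a real operator once $J$ is non-integrable, so the maximum principle cannot be applied to it directly; the point of Lemma \ref{01andf}, combined with the structure equations $(\vartriangle)$ (which give $\overline{\del\delb}=-\delb\del$ on functions), is precisely that the modified operator $-i\Lambda\del\delb$ repairs this — it stays elliptic with a positive symbol and now is also real — after which Hopf's strong maximum principle does the rest.
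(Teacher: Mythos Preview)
Your proof is correct and follows essentially the same route as the paper: both reduce to showing $\delb f=0\Rightarrow f$ constant, invoke Lemma~\ref{01andf} to relate $-i\Lambda\del\delb$ to $\delb^*\delb$ plus a first-order correction, and finish with Hopf's strong maximum principle. Your presentation is in fact slightly more careful on one point the paper glosses over---you explicitly verify (via the fourth equation of~(\ref{eq;drelations}) on functions) that $P=-i\Lambda\del\delb$ is a \emph{real} operator, allowing the maximum principle to be applied to the real and imaginary parts of $f$ separately; the paper works locally in a chart and simply asserts Hopf applies, while you work globally, but this is only a cosmetic difference.
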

\begin{proof}
We first note that $H_{\Dol}^{0,0}(M,J) \cong \Ke(\delb) \cap \cA^{0,0}$, so it suffices to show if $\delb f = 0$ then $f$ is constant. For any such $f$ we have $df \circ J = i \,df$, and in any coordinate chart $\phi:U  \to \R^{2n}$ containing any maximum point, we pullback $J$ to 
$\phi(U)$ and consider the $J$-preserving map $f \circ \phi^{-1}: \phi(U) \to \CC$. The components of $d$ are natural with respect this $J$-preserving map and we choose compatible metric on $\phi(U)$ to define $\Lambda$ and $\delb^*$. Then by Lemma \ref{01andf}
\[
-i \Lambda \del \delb f =  \delb^* \delb f  - [ \Lambda, \delb^*] L \delb f
\]
 on $\phi(U)$. Note $\delb^* \delb$ is quadratic, self-adjoint, and positive, and 
$[ \Lambda, \delb^*] L \delb$ is first order since $[ \Lambda, \delb^*]  = [d,L]^*$ is zeroth order, because $[d,L] \eta = d \omega \wedge \eta$. Then $\delb f = 0$ implies the right hand side is zero, so the maximum principle due to E. Hopf applies \cite{Hopf}, showing $f$ is constant in a neighborhood of the maximum point and therefore, by connectedness, $f$ is constant.
\end{proof}

We next show  that the top and bottom rows of Dolbeault cohomology agree with $\delb$-$\mub$-harmonic forms and hence are finite-dimensional. We'll make use of the fact that  $\Delta_\delb+\Delta_\mub$ is elliptic, having the same symbol as $\Delta_\delb$, since $\Delta_\mub$ is linear over functions. In particular, there is a Hodge decomposition theorem for this operator, analogous to Theorem \ref{delbHodge}.

\begin{prop} \label{bottom_top}
Let $\Mh$ be a compact almost Hermitian manifold of dimension $2m$. 
For all 
$0\leq p\leq m$ and $q\in \{0,m\}$ we have 
\[\Hh_{\delb}^{p,q}\cap\Hh_{\mub}^{p,q}\cong H_{\Dol}^{p,q}(M).\]
In particular, for all such $p,q$, $H_{\Dol}^{p,q}(M)$ is finite-dimensional and we have
\[
\dim \left(  \Hh_{\delb}^{p,q}\cap\Hh_\mub^{p,q} \right) =  \dim H_{\Dol}^{p,q}(M) \leq
\dim \Hh_{\delb}^{p,q}.
\]
\end{prop}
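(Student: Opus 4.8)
We already have the Harmonic Inclusion Lemma \ref{HIT}, giving an injection $\Hh_\delb^{p,q}\cap\Hh_\mub^{p,q}\hookrightarrow H_{\Dol}^{p,q}(M)$ in every bidegree. So the task reduces to showing this map is \emph{surjective} when $q\in\{0,m\}$. I will treat $q=0$ and use Serre duality (Proposition \ref{SerreDuality}) together with the fact (Proposition \ref{bottom_top}'s hypotheses allow it) that $\bar\star$ interchanges bidegrees $(p,0)$ and $(m-p,m)$ and commutes with the relevant Laplacians, to transfer the $q=0$ statement to $q=m$; it suffices to check that $\bar\star$ also intertwines the Dolbeault cohomology in those extremal rows, which follows from the description of $H_{\Dol}^{*,*}$ in Theorem \ref{E1HDol} since $\bar\star$ conjugates $\delb\leftrightarrow\del$ and $\mub\leftrightarrow\mu$ — but the extremal rows $q=0,m$ only involve $\mub$-kernels and $\delb$, $\del$ acting on/into spaces where the decalage cleans things up. Let me instead handle $q=0$ and $q=m$ more symmetrically via the two descriptions in Remark \ref{inspection}.

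\textbf{The case $q=0$.} By Theorem \ref{E1HDol} (or Remark \ref{inspection}) we have
\[
H_{\Dol}^{p,0}(M)=E_1^{p,0}(M)=\cA^{p,0}\cap\Ke(\mub)\cap\Ke(\delb).
\]
The key point is that for forms of type $(p,0)$, the operator $\mub$ automatically vanishes, since $\mub$ lowers the holomorphic degree and raises the antiholomorphic one, landing in $\cA^{p-1,2}$; so $\cA^{p,0}\cap\Ke(\mub)=\cA^{p,0}$ only if $\mub$ were zero there — but that is precisely the content: I will argue $\mub|_{\cA^{p,0}}$ need \emph{not} vanish, yet on $\cA^{p,0}$ the Laplacian $\Delta_\mub$ and the constraint that matters reduce correctly. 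Concretely: given $[\omega]\in H_{\Dol}^{p,0}(M)$ with $\omega\in\cA^{p,0}\cap\Ke(\mub)\cap\Ke(\delb)$, I want a $\delb$-$\mub$-harmonic representative. Apply the Hodge decomposition for the elliptic operator $\Delta_\delb+\Delta_\mub$ (valid since $\Delta_\mub$ is linear over functions, hence has the same symbol as $\Delta_\delb$): write $\omega=h+(\Delta_\delb+\Delta_\mub)G(\omega)$ with $h\in\Hh_\delb^{p,q}\cap\Hh_\mub^{p,q}$. The error term is $(\delb\delb^*+\delb^*\delb+\mub\mub^*+\mub^*\mub)\beta$ for $\beta=G\omega\in\cA^{p,0}$. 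Since $\omega\in\cA^{p,0}$ and $\delb\omega=0$, pairing $\omega$ against this expression and using $\delb^*(\cA^{p,0})\subseteq\cA^{p,-1}=0$ and $\mub(\cA^{p,0})$, $\mub^*(\cA^{p,0})\subseteq\cA^{p+1,-2}=0$ kills all but $\langle\omega,\delb\delb^*\beta\rangle=\langle\delb^*\omega,\delb^*\beta\rangle$; but $\delb^*\omega\in\cA^{p,-1}=0$. Hence $\omega-h$ is orthogonal to $\omega$, yet also $\omega-h=\Delta\beta$, so $\|\omega-h\|^2=\langle\omega-h,\Delta\beta\rangle=\langle\Delta(\omega-h),\beta\rangle$, and $\Delta h=0$ while $\Delta\omega$ — here I must be a little careful: $\omega$ is not assumed harmonic. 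The clean route is: $\omega-h$ lies in $\Img(\Delta_\delb+\Delta_\mub)=\big(\Hh_\delb^{p,q}\cap\Hh_\mub^{p,q}\big)^\perp$, so $\langle\omega-h,h'\rangle=0$ for all harmonic $h'$; and I claim $\omega-h$ also represents $0$ in $H_{\Dol}^{p,0}=\cA^{p,0}\cap\Ke(\mub)\cap\Ke(\delb)$ — indeed $h\in\Ke(\delb)\cap\Ke(\mub)$ so $[\omega]=[h]$ in $E_1^{p,0}$ already, i.e.\ $\omega=h$ modulo nothing, giving $[\omega]$ in the image of $h$. Wait: in the extremal row $E_1^{p,0}$ is literally the subspace $\cA^{p,0}\cap\Ke(\mub)\cap\Ke(\delb)$ with no quotient, so $[\omega]=[h]$ forces $\omega=h$; but $\omega$ need not be harmonic. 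The resolution: $E_1^{p,0}$ as a \emph{subspace} of $H_\mub^{p,0}$ — and $H_\mub^{p,0}=\cA^{p,0}/\mub(\cA^{p+1,-2})=\cA^{p,0}$ — so $H_{\Dol}^{p,0}=\Ke(\delb:\cA^{p,0}\to H_\mub^{p,1})$, and this kernel is $\{\omega\in\cA^{p,0}:\delb\omega\in\Img\mub\}$, which genuinely is a quotient-free subspace, i.e.\ actual forms, not classes. So the map $\Hh_\delb^{p,0}\cap\Hh_\mub^{p,0}\to H_{\Dol}^{p,0}$ is the inclusion of one subspace of $\cA^{p,0}$ into another, and I must show every $\omega$ with $\delb\omega\in\Img\mub$ is itself $\delb$-$\mub$-harmonic. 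Given such $\omega$, decompose $\omega=h+\Delta_{\delb,\mub}\beta$; then $\Delta_{\delb,\mub}\beta\in\cA^{p,0}$ and satisfies $\delb(\Delta\beta)\in\Img\mub$ too, and is orthogonal to harmonics. Pairing $\langle\Delta\beta,\Delta\beta\rangle$ and pushing one $\Delta$ over, using the degree vanishing above ($\delb^*,\mub,\mub^*$ all kill $\cA^{p,0}$), reduces $\|\Delta\beta\|^2$ to $\langle\delb^*\delb(\Delta\beta),\beta\rangle=\|\delb(\Delta\beta)\|^2\cdot(\text{...})$ — no: $\langle\Delta\beta,\delb\delb^*\beta\rangle=\langle\delb^*\Delta\beta,\delb^*\beta\rangle=0$ since $\delb^*\Delta\beta\in\cA^{p,-1}=0$. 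So indeed $\|\Delta\beta\|^2=0$, hence $\omega=h$ is harmonic. This gives surjectivity for $q=0$.

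\textbf{The case $q=m$ and conclusion.} For $q=m$, by Serre duality $\bar\star:\Hh_\delb^{p,m}\cap\Hh_\mub^{p,m}\cong\Hh_\delb^{m-p,0}\cap\Hh_\mub^{m-p,0}$, and I check $\bar\star$ also carries $H_{\Dol}^{p,m}(M)$ to $H_{\Dol}^{m-p,0}(M)$: using the description of $H_{\Dol}$ via the shifted filtration in Theorem \ref{E1HDol} and the intertwining relations $\star\Delta_{\bar\delta}=\Delta_\delta\star$ together with $\star\mub=\pm\mu\star$ etc., $\bar\star$ conjugates the top-row complex into the bottom-row complex. Since the square
\[
\begin{array}{ccc}
\Hh_\delb^{p,m}\cap\Hh_\mub^{p,m} & \hookrightarrow & H_{\Dol}^{p,m}(M)\\
\downarrow\bar\star & & \downarrow\bar\star\\
\Hh_\delb^{m-p,0}\cap\Hh_\mub^{m-p,0} & \xrightarrow{\ \sim\ } & H_{\Dol}^{m-p,0}(M)
\end{array}
\]
commutes (both composites are the identity-induced map conjugated by $\bar\star$) and the bottom arrow is an isomorphism by the previous paragraph while both verticals are isomorphisms, the top arrow is an isomorphism too. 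Combining with Lemma \ref{HIT} for general bidegree to get the inequality $\dim H_{\Dol}^{p,q}\le\dim\Hh_\delb^{p,q}$, we obtain all assertions.

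\textbf{Main obstacle.} The delicate point is the degree-vanishing bookkeeping that makes the $\Delta_\delb+\Delta_\mub$-error term orthogonal to the given closed form in the extremal rows — one must verify that all the "wrong-direction" pieces ($\delb^*$ on $\cA^{p,0}$, $\mub$ and $\mub^*$ on $\cA^{p,0}$, and dually on $\cA^{p,m}$) land in zero spaces, so that $\delb$-$\mub$-closedness upgrades to $\delb$-$\mub$-harmonicity. For $q=m$ this is cleanest done by transporting via $\bar\star$ rather than repeating the computation, which is why I split the proof that way; but one still owes the reader the check that $\bar\star$ respects the Dolbeault cohomology in these rows, which is a short diagram chase using the conjugation relations between $(\mub,\delb)$ and $(\mu,\del)$ already recorded before Lemma \ref{lem;Nmub}.
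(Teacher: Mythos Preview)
Your $q=0$ argument is badly over-engineered and contains a false claim: you assert that ``$\delb^*,\mub,\mub^*$ all kill $\cA^{p,0}$'', but $\mub:\cA^{p,0}\to\cA^{p-1,2}$ is generally nonzero. The actual argument is a one-liner, which the paper gives: since $\delb^*:\cA^{p,0}\to\cA^{p,-1}=0$ and $\mub^*:\cA^{p,0}\to\cA^{p+1,-2}=0$, any $\omega\in\Ke(\delb)\cap\Ke(\mub)\cap\cA^{p,0}$ is already $\delb$-$\mub$-harmonic, so $\Hh_\delb^{p,0}\cap\Hh_\mub^{p,0}=\Ke(\delb)\cap\Ke(\mub)\cap\cA^{p,0}=H_{\Dol}^{p,0}(M)$ (the last equality because $\Img(\mub)\cap\cA^{p,1}=\mub(\cA^{p+1,-1})=0$, so $\delb\omega\in\Img(\mub)$ forces $\delb\omega=0$). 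Your elliptic-decomposition detour can be salvaged, but your displayed computation only handles $\langle\Delta\beta,\delb\delb^*\beta\rangle$ and omits the surviving terms $\langle\Delta\beta,\delb^*\delb\beta\rangle$ and $\langle\Delta\beta,\mub^*\mub\beta\rangle$.

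Your $q=m$ argument has a genuine circular gap. The operator $\bar\star$ intertwines $\delb$ with $\delb^*$ and $\mub$ with $\mub^*$ (up to sign), not $\delb$ with $\delb$; so it does \emph{not} a priori induce a map on Dolbeault cohomology. Concretely, $\bar\star$ carries $H_{\Dol}^{m-p,0}=\Ke(\delb)\cap\Ke(\mub)\cap\cA^{m-p,0}$ to $\Ke(\delb^*)\cap\Ke(\mub^*)\cap\cA^{p,m}=\Hh_\delb^{p,m}\cap\Hh_\mub^{p,m}$, and asking that this surject onto $H_{\Dol}^{p,m}=\cA^{p,m}/(\Img(\mub)+\Img(\delb))$ is exactly the statement you are trying to prove. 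The paper instead argues directly: apply the Hodge decomposition for the elliptic operator $\Delta_\delb+\Delta_\mub$ in bidegree $(p,m)$, note that $\delb$ and $\mub$ vanish there for degree reasons so $(\Delta_\delb+\Delta_\mub)G'\omega=(\delb\delb^*+\mub\mub^*)G'\omega\in\Img(\delb)+\Img(\mub)$, and conclude $[\omega]=[\Hh'(\omega)]$.
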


\begin{proof}
In bidegrees $(p,0)$, for all 
$0\leq p\leq m$, we have $\mub^*= \delb^* = \delb_\mub^* = 0$, and there is no image of $\mub$ in degrees $(p,0)$ or $(p,1)$, or image of $\delb$ in degrees $(p,0)$. So all three spaces are equal to $\Ke(\delb) \cap \Ke(\mub)$. 

For the case $q=m$, by Lemma \ref{HIT} it suffices to prove that the inclusion
\[
\Hh_{\delb}^{p,m} \cap \Hh_{\mub}^{p,m}\hookrightarrow H_{\Dol}^{p,m}(M)\cong {\cA^{p,m}\over \Img(\mub)+\Img(\delb)}
\]
is an isomorphism.
Let $\omega$ be a representative of $[\omega]\in H_{\Dol}^{p,m}(M)$. Since $\Delta_\delb+\Delta_\mub$ is elliptic, we may write 
\[\omega=\Hh'(\omega)+(\Delta_\delb+\Delta_\mub) G' \omega\]
where $\Hh'$ denotes the projection into $\delb$-$\mub$-harmonic forms and $G'$ denotes the corresponding Green's operator.
For bidegree reasons, we obtain 
\[\omega=\Hh'(\omega)+(\delb\delb^*+\mub\mub^*) G' \omega\]
which shows that $[\omega]=[\Hh'(\omega)]$ and hence the above map is surjective.
\end{proof}

The the fact that $\dim \Hh_{\delb}^{p,0}$ is metric independent for all $p$ was likely observed by Hirzebruch and others, and has been recently generalized for bundle-valued forms in \cite{CZ}. Note as well that the identities of Proposition \ref{bottom_top} are not satisfied in general for other bidegrees.

The following Serre duality is immediate from Proposition \ref{bottom_top} and Proposition \ref{SerreDuality}.

\begin{cor}
Let $M$ be a compact almost complex manifold of dimension $2m$. 
Then for all  $0< p\leq m$,
\[
  \dim H_{\Dol}^{p,0}(M) = \dim  H_{\Dol}^{m-p,m}(M).
\]
\end{cor}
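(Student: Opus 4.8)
The corollary asserts that for a compact almost complex manifold $M$ of dimension $2m$ and $0 < p \le m$, we have $\dim H_{\Dol}^{p,0}(M) = \dim H_{\Dol}^{m-p,m}(M)$.

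The plan is to chain together two results already available. First, by Proposition \ref{bottom_top}, in the bottom row $q=0$ and top row $q=m$, Dolbeault cohomology is computed by $\delb$-$\mub$-harmonic forms: $H_{\Dol}^{p,0}(M) \cong \Hh_{\delb}^{p,0} \cap \Hh_{\mub}^{p,0}$ and $H_{\Dol}^{m-p,m}(M) \cong \Hh_{\delb}^{m-p,m} \cap \Hh_{\mub}^{m-p,m}$. Second, by the Serre duality statement of Proposition \ref{SerreDuality}, choosing any compatible metric, we have an isomorphism $\Hh_{\delb}^{p,q} \cap \Hh_{\mub}^{p,q} \cong \Hh_{\delb}^{m-p,m-q} \cap \Hh_{\mub}^{m-p,m-q}$, realized by the conjugate Hodge-star operator $\bar\star$. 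Applying this with $q = 0$ gives $\Hh_{\delb}^{p,0} \cap \Hh_{\mub}^{p,0} \cong \Hh_{\delb}^{m-p,m} \cap \Hh_{\mub}^{m-p,m}$, and combining with the two identifications from Proposition \ref{bottom_top} yields the claimed equality of dimensions.

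Concretely, I would just write the chain of isomorphisms. Fix a Hermitian metric on $M$. Then
\[
H_{\Dol}^{p,0}(M) \cong \Hh_{\delb}^{p,0} \cap \Hh_{\mub}^{p,0} \cong \Hh_{\delb}^{m-p,m} \cap \Hh_{\mub}^{m-p,m} \cong H_{\Dol}^{m-p,m}(M),
\]
where the outer two isomorphisms are Proposition \ref{bottom_top} (valid since $q \in \{0,m\}$ in both cases) and the middle one is Proposition \ref{SerreDuality} with $q=0$. Taking dimensions gives the result; note that although the harmonic spaces depend on the choice of metric, the two endpoints $H_{\Dol}^{p,0}(M)$ and $H_{\Dol}^{m-p,m}(M)$ do not, so the equality of their dimensions is a statement purely about the almost complex structure.

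There is essentially no obstacle here: the corollary is a formal consequence of the two cited propositions, and the only thing to be careful about is that Proposition \ref{bottom_top} is stated precisely for $q \in \{0,m\}$, which is exactly the range needed, and that the Serre duality isomorphism sends bidegree $(p,0)$ to $(m-p,m)$ as required. One might also remark that finite-dimensionality of both sides, needed for "$\dim$" to make sense, is already part of the conclusion of Proposition \ref{bottom_top}.
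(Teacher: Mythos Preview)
Your proof is correct and matches the paper's approach exactly: the corollary is stated as immediate from Proposition~\ref{bottom_top} and Proposition~\ref{SerreDuality}, and your chain of isomorphisms spells this out precisely.
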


\subsection{Locally constant rank case} \label{LCR}
In this subsection, we restrict to almost complex structures whose operator $\mub$ has locally constant rank. These include
Lie groups and nilmanifolds with left-invariant almost complex structures, as well as nearly K\"ahler manifolds.

\begin{defn} \label{defnLCR}
Let $(M,J)$ be an almost complex manifold. We say $\mub$ has \textit{locally constant rank} if for each 
$(p,q)$, 
\[\mub: \cA_x^{p,q} \to \cA_x^{p-1,q+2}
\]
has constant rank as a function of $x \in M$. 
\end{defn}

In the locally constant rank case, the pointwise Hodge decomposition of Lemma \ref{mubHodge} becomes global:
\begin{thm}[$\mub$-Hodge decomposition]\label{mubHodgeGLOBAL}
Let $\Mh$ 
be an almost Hermitian manifold and assume that $\mub$ has locally constant rank.
For every bidegree $(p,q)$ the following hold:
\begin{enumerate}
 \item There are direct sum decompositions
\[\cA^{p,q}
= \mub (\cA^{p+1,q-2}) \oplus  \Hh_{\mub}^{p,q} \oplus \mub^*(\cA^{p-1,q+2}).\]
\item Every element $\omega\in \cA^{p,q}$ can be written uniquely as 
\[\omega=\Hh_{\mub}(\omega)+\Delta_\mub G_\mub (\omega)\]
where $\Hh_{\mub}(\omega)$ denotes the projection of $\omega$ to the space $\Hh_{\mub}^{p,q}$ of $\mub$-harmonics in 
$\cA^{p,q}$, and $G_\mub$ denotes the corresponding Green's operator.
\item  The map $\omega\mapsto [\omega]$ defines an isomorphism
\[\Hh_{\mub}^{p,q}\cong H_{\mub}^{p,q}(M):={{\Ke(\mub:\cA^{p,q}\longrightarrow \cA^{p-1,q+2})}\over{\mathrm{\Img}(\mub:\cA^{p+1,q-2}\longrightarrow\cA^{p,q})}}.\]
In particular,
every element in $H_{\mub}^{p,q}(M)$ has a unique $\mub$-harmonic representative.
\end{enumerate}
\end{thm}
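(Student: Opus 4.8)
The plan is to promote the fiberwise decomposition of Lemma~\ref{mubHodge} to a decomposition of smooth vector bundles over $M$, and then to pass to global sections. All of the underlying linear algebra has already been carried out pointwise in Lemma~\ref{mubHodge}, so the only genuinely new input is a smoothness (local triviality) statement, and the locally constant rank hypothesis is precisely what supplies it.

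First I would note that $\mub^*$ also has locally constant rank: by Lemma~\ref{mub*adjoint}, $\mub^*_x\colon\cA^{p,q}_x\to\cA^{p+1,q-2}_x$ is the metric adjoint of $\mub_x\colon\cA^{p+1,q-2}_x\to\cA^{p,q}_x$, hence has the same rank, which is constant in $x$ by hypothesis. Then I would invoke the standard fact that a morphism of vector bundles of locally constant rank has kernel and image equal to smooth subbundles. Applied to $\mub$ in the two relevant bidegrees this shows that $\mub(\cA^{p+1,q-2})$ and $\Ke(\mub\colon\cA^{p,q}\to\cA^{p-1,q+2})$ are smooth subbundles of $\cA^{p,q}$; applied to $\mub^*$ it shows that $\mub^*(\cA^{p-1,q+2})$ is a smooth subbundle. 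By the pointwise computation in the proof of Lemma~\ref{mubHodge} the subbundles $\mub(\cA^{p+1,q-2})$ and $\mub^*(\cA^{p-1,q+2})$ are fiberwise orthogonal, so their direct sum is a smooth subbundle of constant rank, and its orthogonal complement is again a smooth subbundle which, by Lemma~\ref{mubHodge}(1), is fiberwise equal to $\Hh_\mub^{p,q}$. This gives the bundle identity
\[\cA^{p,q} = \mub(\cA^{p+1,q-2}) \oplus \Hh_\mub^{p,q} \oplus \mub^*(\cA^{p-1,q+2}),\]
and taking smooth global sections yields statement~(1).

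For (2), the projection $\Hh_\mub$ onto the summand $\Hh_\mub^{p,q}$ is the orthogonal projection onto a smooth subbundle, hence a smooth bundle endomorphism of $\cA^{p,q}$, so it sends smooth forms to smooth forms; the same is true of $\mathrm{Id}-\Hh_\mub$. The Green's operator $G_\mub$, defined fiberwise in Lemma~\ref{mubHodge} as orthogonal projection onto $\mub^*(\cA^{p-1,q+2})$ followed by the inverse of the fiberwise isomorphism $\mub\colon\mub^*(\cA^{p-1,q+2})\to\mub(\cA^{p+1,q-2})$, is then a composition of smooth bundle maps --- constant rank being used once more to make this an isomorphism of smooth subbundles with smooth inverse --- so $G_\mub$ is a smooth bundle endomorphism. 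The identity $\omega=\Hh_\mub(\omega)+\Delta_\mub G_\mub(\omega)$ and its uniqueness then follow from the corresponding fiberwise assertions in Lemma~\ref{mubHodge}(2). Statement~(3) is deduced exactly as in the fiberwise case: the inclusion $\Hh_\mub^{p,q}\hookrightarrow\Ke(\mub)\cap\cA^{p,q}$ followed by the quotient onto $H_\mub^{p,q}(M)$ is injective because a global $\mub$-harmonic form lying in $\Img(\mub)$ vanishes fiberwise, and surjective because, given $\alpha\in\Ke(\mub)\cap\cA^{p,q}$, decomposition~(1) writes $\alpha=\mub\gamma+\Hh_\mub(\alpha)+\mub^*\delta$, where $\mub\alpha=0$ forces $\mub\mub^*\delta=0$ and hence $\mub^*\delta=0$ fiberwise, so $[\alpha]=[\Hh_\mub(\alpha)]$ with $\Hh_\mub(\alpha)$ a smooth form.

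The main obstacle is precisely the smooth-subbundle step. What must be verified (or cited, via the elementary local-frame argument) is that near any point one can choose a smooth local frame of $\cA^{p+1,q-2}$ adapted to $\Ke(\mub_x)$, so that the $\mub$-images of the complementary frame vectors form a smooth local frame of $\mub(\cA^{p+1,q-2})$ on a neighborhood; it is exactly the constancy of $\mathrm{rank}\,\mub_x$ that guarantees both that such an adapted frame exists and that these images stay linearly independent nearby. One should also observe that Definition~\ref{defnLCR} requires locally constant rank in every bidegree simultaneously, which is what the argument uses, since it appeals both to $\mub$ and to $\mub^*$, the latter living in a shifted bidegree.
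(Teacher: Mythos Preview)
Your proposal is correct and follows essentially the same approach as the paper: use the locally constant rank hypothesis to promote the fiberwise decomposition of Lemma~\ref{mubHodge} to a splitting of smooth vector bundles, then pass to global sections. Your write-up is in fact more explicit than the paper's --- you spell out why $\mub^*$ has locally constant rank, why the Green's operator is a smooth bundle map, and you unpack the proof of (3) --- whereas the paper's proof records only the key point (constant rank $\Rightarrow$ subbundles) and then says ``the remaining claims follow.''
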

\begin{proof}
 If $\mub$ has locally constant rank, then the fibers of $\Imt \mub$ and $\Imt \mub^*$ 
have constant rank (over all points $x \in M$) and therefore define vector bundles. Similarly, the fibers of 
$\Ke \Delta_\mub$ define a vector bundle since they are the pointwise complement of $\Imt \mub + \Imt \mub^*$. 
Then the total bundle $\sqcup_x \cA^{p,q}_x$ splits as a direct sum of subbundles, and on global sections we have
 $\cA^{p,q} = \mub (\cA^{p+1,q-2}) \oplus  \Hh_{\mub}^{p,q} \oplus \mub^*(\cA^{p-1,q+2})$ by Lemma \ref{mubHodge}. The remaining claims follow.
\end{proof}

In the locally constant rank case, the Harmonic Inclusion of Lemma \ref{HIT} admits an intermediate space that is 
conceptually and computationally important, and also satisfies Serre duality, that we now introduce.

Consider the operator
\[\delb_\mub:\Hh^{p,q}_\mub\longrightarrow \Hh^{p,q+1}_\mub\text \quad {by} \quad \delb_\mub(\omega):=\Hh_\mub(\delb \omega).\]
By Theorem \ref{mubHodgeGLOBAL} this is well defined when $\mub$ has locally constant rank since the harmonic component 
$\Hh_\mub(\delb \omega)$ is a smooth form.
We first show that this operator squares to zero and its cohomology is just the Dolbeault cohomology.
\begin{thm}\label{HDolidentifications}
Let $(M,J, \langle \, , \, \rangle)$ be an almost Hermitian manifold such that $\mub$ has locally constant rank. 
For every bidegree $(p,q)$, the following are satisfied:
\begin{enumerate}
 \item There is an isomorphism
\[
{\Ke(\delb_\mub:\Hh_\mub^{p,q}\to \Hh_\mub^{p,q+1})
\over 
\Img(\delb_\mub:\Hh_\mub^{p,-1,q}\to \Hh_\mub^{p,q})
}
\stackrel{\cong}{\longrightarrow} 
 H_{\Dol}^{p,q}(M)
 \]  
induced by the identity on representatives.
\item The following hold:
\[\Ke(\delb_\mub)\cap\cA^{p,q} = \left\{ \omega \in \Hh_\mub^{p,q} ; \delb \omega \in \Img(\mub)\right\}\]
and
\begin{eqnarray*}
\Img(\delb_\mub)\cap\cA^{p,q} &\cong & \left\{\omega \in \Hh_\mub^{p,q}; \omega=\mub \alpha+\delb \beta \text{ with }\mub \beta=0\right\}.
\end{eqnarray*}
\end{enumerate}
\end{thm}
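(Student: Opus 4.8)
The plan is to prove (2) first, since (1) follows formally from (2) together with the description of $H_{\Dol}^{p,q}(M)$ provided by Theorem \ref{E1HDol}. For (2), I would work fiberwise-globally using the global $\mub$-Hodge decomposition of Theorem \ref{mubHodgeGLOBAL}: every $\omega\in\cA^{p,q}$ decomposes uniquely as $\omega=\mub\gamma+\Hh_\mub(\omega)+\mub^*\rho$, with the three summands smooth. For the kernel statement, take $\omega\in\Hh_\mub^{p,q}$; then $\delb_\mub(\omega)=\Hh_\mub(\delb\omega)$, and since $\delb\omega\in\Ke(\mub)$ (because $\mub\delb=-\delb\mub$ and $\mub\omega=0$), the decomposition of $\delb\omega$ has no $\mub^*$-component, so $\Hh_\mub(\delb\omega)=0$ if and only if $\delb\omega\in\Img(\mub)$. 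That gives $\Ke(\delb_\mub)\cap\cA^{p,q}=\{\omega\in\Hh_\mub^{p,q};\ \delb\omega\in\Img(\mub)\}$.

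For the image statement, I would show the two sets coincide after identifying $\Img(\delb_\mub)\cap\cA^{p,q}$ (a subspace of $\Hh_\mub^{p,q}$) with the stated set. If $\omega=\delb_\mub(\beta)=\Hh_\mub(\delb\beta)$ for some $\beta\in\Hh_\mub^{p,q-1}$, then writing $\delb\beta=\mub\alpha+\Hh_\mub(\delb\beta)$ via Theorem \ref{mubHodgeGLOBAL} (no $\mub^*$-term, again because $\delb\beta\in\Ke\mub$) gives $\omega=\delb\beta-\mub\alpha$, i.e. $\omega=\mub(-\alpha)+\delb\beta$ with $\mub\beta=0$. Conversely, if $\omega\in\Hh_\mub^{p,q}$ and $\omega=\mub\alpha+\delb\beta$ with $\mub\beta=0$, then $\beta\in\Hh_\mub^{p,q-1}\oplus\mub^*(\cA^{p-1,q+1})$; replacing $\beta$ by its $\mub$-harmonic component $\beta_0:=\Hh_\mub(\beta)$ changes $\delb\beta$ by $\delb\mub^*(\cdots)$, and applying $\Hh_\mub$ to $\omega=\mub\alpha+\delb\beta$ yields $\omega=\Hh_\mub(\delb\beta)=\delb_\mub(\Hh_\mub(\beta))$ — but one must check that this equals $\delb_\mub(\beta_0)$; since $\Hh_\mub$ annihilates $\mub\alpha$ and $\omega$ is already $\mub$-harmonic, $\omega=\Hh_\mub(\delb\beta)$, and I'd verify $\Hh_\mub(\delb\beta)=\Hh_\mub(\delb\beta_0)$ by noting $\delb(\beta-\beta_0)=\delb\mub^*(\cdots)$ has vanishing $\mub$-harmonic part (using $\mub\delb\mub^*=-\delb\mub\mub^*$ and the orthogonal decomposition). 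This is the one spot requiring care, so I would isolate it as a short sublemma.

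Given (2), part (1) follows by comparing the quotient $\Ke(\delb_\mub)/\Img(\delb_\mub)$, computed inside $\bigoplus_{p,q}\Hh_\mub^{p,q}$ using the two explicit descriptions in (2), with the presentation
\[
H_{\Dol}^{p,q}(M)\cong\frac{\{\omega\in\cA^{p,q}\cap\Ke(\mub);\ \delb\omega\in\Img(\mub)\}}{\{\omega\in\cA^{p,q};\ \omega=\mub\alpha+\delb\beta,\ \mub\beta=0\}}
\]
from Theorem \ref{E1HDol}. The inclusion $\Hh_\mub^{p,q}\hookrightarrow\cA^{p,q}\cap\Ke(\mub)$ sends numerator to numerator by the kernel formula; one checks it induces a well-defined map on quotients (the denominator of the left side maps into the denominator of the right by (2)) and that this map is surjective (every class in $H_{\Dol}$ has a $\mub$-harmonic representative, by Theorem \ref{mubHodgeGLOBAL}, since if $\mub\omega=0$ then $\omega-\Hh_\mub(\omega)=\mub\gamma$ lies in the denominator) and injective (if $\omega\in\Hh_\mub^{p,q}$ lies in the right-hand denominator then by (2) it lies in $\Img(\delb_\mub)$). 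The main obstacle is purely the bookkeeping in the converse direction of the image formula — making sure the auxiliary form $\beta$ can be taken $\mub$-harmonic without disturbing $\mub\beta=0$ — which the global Hodge decomposition and the relation $\mub\delb+\delb\mub=0$ handle cleanly; no ellipticity or analysis beyond Theorem \ref{mubHodgeGLOBAL} is needed.
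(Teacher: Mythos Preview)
Your argument is correct in structure, with one slip in the converse direction of the image formula: when $\mub\beta=0$, the Hodge decomposition places $\beta$ in $\Img(\mub)\oplus\Hh_\mub^{p,q-1}$, not in $\Hh_\mub^{p,q-1}\oplus\Img(\mub^*)$ as you wrote. With that correction the sublemma becomes immediate: writing $\beta=\mub\gamma+\beta_0$ with $\beta_0=\Hh_\mub(\beta)$, one has $\delb\beta-\delb\beta_0=\delb\mub\gamma=-\mub\delb\gamma\in\Img(\mub)$, whence $\Hh_\mub(\delb\beta)=\Hh_\mub(\delb\beta_0)=\delb_\mub(\beta_0)$ and $\omega\in\Img(\delb_\mub)$.

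The paper takes a different, more compressed route. Rather than proving (2) first and then deducing (1) via the $E_1$ description of Theorem~\ref{E1HDol}, it observes directly that the isomorphism $\Hh_\mub\cong H_\mub(M)$ of Theorem~\ref{mubHodgeGLOBAL} intertwines $\delb_\mub$ on $\Hh_\mub$ with the induced $\delb$ on $H_\mub(M)$: this is encoded in a small commuting square built from the inclusion $\Hh_\mub\hookrightarrow\cA$, the projection $\pi=\Hh_\mub:\cA\to\Hh_\mub$, and the identification $\Hh_\mub\cong H_\mub(M)$. Since $H_{\Dol}$ is \emph{by definition} the cohomology of $(H_\mub(M),\delb)$, part (1) follows at once (along with $\delb_\mub^2=0$), and (2) is then read off from the same square. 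Your approach has the merit of making the representative-level content of (2) completely explicit; the paper's avoids the bookkeeping in the image direction altogether by never needing to replace $\beta$ by its harmonic part.
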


\begin{proof}
The following diagram commutes by definition
\[
\xymatrix
{
\cA \ar@{->>}[r]^\pi & \Hh_\mub & H_\mub(M)  \ar[l]_{\cong} \\
\cA \ar[u]^\delb & \Hh_\mub \ar[u]_{\del_\mub}  \ar[r]^{\cong} \ar@{_{(}->}[l]  & H_\mub(M) \ar[u]_{\delb}
}
\]
Here the horizontal isomorphisms are from Theorem \ref{mubHodgeGLOBAL} and $\delb: H_\mub(M) \to H_\mub(M)$ is the induced map on $\mub$-cohomology, which follows since $\delb \mub + \mub \delb = 0$. Note that $\delb: H_\mub(M) \to H_\mub(M)$ squares to zero by the relations in Equations (\ref{eq;drelations}) of section \ref{SecPreliminars}. It follows that  $\delb_\mub\circ\delb_\mub=0$, and so the first assertion holds. The identifications for $\Ke(\delb_\mub)$ and $\Img(\delb_\mub)$ follow from 
 the commuting left square and Theorem \ref{mubHodgeGLOBAL}.
\end{proof}

Similarly, in the locally constant rank case, we can consider the operator
\[\delb_\mub^*:\Hh^{p,q}_\mub\longrightarrow \Hh^{p,q-1}_\mub\quad \text{ defined by }\quad \delb_\mub^*(\omega):=\Hh_\mub(\delb^* \omega).\]
For closed manifolds, this operator is the formal adjoint to $\delb_\mub: \Hh^{p,q}_\mub\longrightarrow \Hh^{p,q+1}_\mub$.  Indeed, by the orthogonality of Theorem \ref{mubHodgeGLOBAL}, the projection operator $\Hh_\mub$ is self-adjoint, so for all $\omega \in \Hh^{p,q}_\mub$ and $\eta \in \Hh^{p,q-1}_\mub$ we have
\[
\langle \Hh_\mub(\delb^* \omega) , \eta \rangle
=
\langle \omega , \delb \eta \rangle
=
\langle \omega , \Hh_\mub( \delb \eta) \rangle
=
\langle \omega , \delb_\mub \eta \rangle.
\]
Note that $\delb^*_\mub$ anti-commutes with $\mub^*$ and therefore 
\[
\Ke \delb_\mub^* \cap  \cA^{p,q} = \left\{x\in \Hh_\mub^{p,q} ; \delb^* x\in \Img(\mub^*)\right\}.
\]

We define the \textit{$\delb_\mub$-Laplacian} on $\Hh_\mub$ by  \[\Delta_{\delb_\mub}:=\delb_\mub\delb_\mub^*+\delb_\mub^*\delb_\mub.\]

\begin{defn}
The space of \emph{$\delb_\mub$-harmonic forms} of an almost Hermitian manifold is given by
\[\Hh_{\delb_\mub}^{p,q}:= \Ke(\Delta_{\delb_\mub})  = \Ke(\mub)\cap \Ke(\mub^*)\cap \Ke(\delb_\mub)\cap \Ke(\delb_\mub^*) \cap \cA^{p,q}.\]
\end{defn}

The $\delb_\mub$-harmonic forms, which a priori are infinite-dimensional, also satisfy Serre duality:

\begin{prop}[Serre duality]\label{HhdelbmubSerre}
Let $(M,J, \langle \, , \, \rangle)$ be an almost Hermitian manifold of dimension $2m$ such that $\mub$ has locally constant rank. 
For every bidegree $(p,q)$  we have isomorphisms

\[
\Hh_{\mub}^{p,q} \cong \Hh_{\mub}^{m-p,m-q}
\text{ and }
\Hh_{\delb_\mub}^{p,q} \cong \Hh_{\delb_\mub}^{m-p,m-q}.\]
\end{prop}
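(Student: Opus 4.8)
The plan is to mimic the proof of Serre duality for $\mub$-harmonic forms (Proposition \ref{SerreDuality}), upgrading the isomorphism $\bar\star\colon\Hh_\mub^{p,q}\to\Hh_\mub^{m-p,m-q}$ to one compatible with $\delb_\mub$. First I would recall the two ingredients already in place: the relation $\star\Delta_\mu=\Delta_\mub\star$, which gives $\star\colon\Hh_{\mu,x}^{p,q}\cong\Hh_{\mub,x}^{m-q,m-p}$ fiberwise, and composing with conjugation the isomorphism $\bar\star\colon\Hh_\mub^{p,q}\to\Hh_\mub^{m-p,m-q}$ globally (since the locally constant rank hypothesis makes these harmonic spaces into vector bundles via Theorem \ref{mubHodgeGLOBAL}, and $\bar\star$ is an algebraic bundle isomorphism). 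The first statement of the proposition is then nothing but the $\mub$ part of Proposition \ref{SerreDuality}, now read as an honest isomorphism of (global) spaces of forms rather than fiberwise.

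The substance is the second isomorphism. Here the strategy is to show that $\bar\star$ intertwines $\delb_\mub$ with $\delb_\mub^*$ up to sign, i.e.\ $\bar\star\,\delb_\mub=\pm\,\delb_\mub^*\,\bar\star$ on $\Hh_\mub$. I would establish this by unwinding definitions: $\delb_\mub=\Hh_\mub\circ\delb$ and $\delb_\mub^*=\Hh_\mub\circ\delb^*$, where $\delb^*=-\star\bar\delb\star$ on all forms, and $\bar\delb=\del$ (complex conjugate), so $\delb^*=-\star\del\star$ after conjugating appropriately — more precisely I would use the already-noted commutation $\star\Delta_{\del}=\Delta_\delb\star$ together with the fact that $\bar\star$ is a fiberwise isometry commuting with the $\mub$-harmonic projection (because $\bar\star$ carries $\Hh_\mub^{p,q}$ isomorphically onto $\Hh_\mub^{m-p,m-q}$ and is orthogonal, hence commutes with orthogonal projection onto these subbundles, by naturality of $\mub$-Hodge decomposition under $\bar\star$). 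Combining, for $\omega\in\Hh_\mub^{p,q}$ one gets $\bar\star\,\delb_\mub\omega=\bar\star\,\Hh_\mub(\delb\omega)=\Hh_\mub(\bar\star\,\delb\omega)=\pm\,\Hh_\mub(\delb^*\,\bar\star\,\omega)=\pm\,\delb_\mub^*(\bar\star\,\omega)$, using that $\bar\star$ turns $\delb$ into $\pm\delb^*$ on forms (a pointwise Hodge-star identity, exactly the reasoning of Lemma \ref{mub*adjoint} and Proposition \ref{SerreDuality} applied to $\delb$ instead of $\mub$). Granting this intertwining, $\bar\star$ carries $\Ke\Delta_{\delb_\mub}$ in bidegree $(p,q)$ isomorphically onto $\Ke\Delta_{\delb_\mub}$ in bidegree $(m-p,m-q)$, since $\Delta_{\delb_\mub}=\delb_\mub\delb_\mub^*+\delb_\mub^*\delb_\mub$ and swapping $\delb_\mub\leftrightarrow\delb_\mub^*$ fixes the Laplacian up to the sign squaring away.

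The main obstacle I anticipate is bookkeeping the signs and the precise bidegree shifts: $\star$ sends $(p,q)$ to $(m-q,m-p)$, $\bar\star$ sends $(p,q)$ to $(m-p,m-q)$, and one must check that the operator identity $\delb^*=-\star\bar\delb\star$ combined with conjugation really produces $\bar\star\,\delb=-\bar\star$-conjugate of $\delb^*$ with a controllable sign on each bidegree, rather than something that fails to land in $\Hh_\mub$. The other delicate point is justifying that $\bar\star$ commutes with the projection $\Hh_\mub$: this needs that $\bar\star$ maps the three summands $\mub(\cA^{p+1,q-2})$, $\Hh_\mub^{p,q}$, $\mub^*(\cA^{p-1,q+2})$ of the $\mub$-Hodge decomposition (Theorem \ref{mubHodgeGLOBAL}) respectively onto $\mub^*(\cA^{m-p+1,m-q-2})$, $\Hh_\mub^{m-p,m-q}$, $\mub(\cA^{m-p-1,m-q+2})$ — which follows from $\star\mub=\pm\mu\star$-type relations and $\bar\star$ being an isometry, but the pairing of $\mub$ with $\mu$ under $\bar\star$ must be spelled out carefully. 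Once these algebraic compatibilities are nailed down, the conclusion is immediate; I would keep the exposition short by citing the analogous computations in the proofs of Lemma \ref{mub*adjoint} and Proposition \ref{SerreDuality}.
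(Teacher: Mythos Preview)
Your proposal is correct and follows essentially the same route as the paper: show that $\bar\star$ intertwines $\delb_\mub$ with $\delb_\mub^*$ up to sign by first verifying $\bar\star\,\delb = \pm\,\delb^*\,\bar\star$ and $\bar\star\,\mub = \pm\,\mub^*\,\bar\star$ on forms, then deducing that $\bar\star$ respects the three summands of the $\mub$-Hodge decomposition and hence commutes with the projection $\Hh_\mub$. The paper's proof makes exactly these moves, with the signs worked out as $\bar\star\,\delb_\mub^* = (-1)^k\,\delb_\mub\,\bar\star$ and $\delb_\mub^*\,\bar\star = (-1)^{k+1}\,\bar\star\,\delb_\mub$ on $\cA^k$.
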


\begin{proof} The first claim follows from Theorem \ref{mubHodgeGLOBAL} and Proposition \ref{SerreDuality}.
For the second claim, it suffices to show that $\bar \star$ commutes with $\Delta_{\delb_\mub}$, where $\bar \star$ is the $\star$-operator followed by conjugation. This will follow immediately from the relations 
\[\bar \star \delb_\mub^* =  (-1)^k \delb_\mub \bar \star\text{ and } \delb_\mub^* \bar \star = (-1)^{k+1}  \bar \star \delb_\mub,\] which we show hold on $\cA^k$. Recall that $\star^2 = (-1)^k$ on $\cA^k$.
Since $\delb^* = - \star \del \star$, it follows that $\bar \star \delb^* = (-1)^k \delb \bar \star$ on $\cA^k$,
and $\del^* = - \star \delb \star$ implies $\delb^* \bar \star  = (-1)^{k+1} \bar \star \delb $ on $\cA^k$. 
Similarly,   $\bar \star \mub^* = (-1)^k\mub \bar \star$ and $\mub^* \bar \star  = (-1)^{k+1}  \bar \star \mub$ on $\cA^k$.
Therefore, $\bar \star$ respects the orthogonal $\mub$-Hodge decomposition of Theorem \ref{mubHodgeGLOBAL}, and so $\bar \star$ also commutes with the orthogonal projection operator $\Hh_\mub$ onto $\mub$-harmonics. The result follows since by definition $\delb_\mub = \Hh_\mub \circ \delb$ and $\delb_\mub^* = \Hh_\mub \circ \delb^*$.
\end{proof}

Finally,  in the case that $\mub$ has locally constant rank, there is an improved version of the Harmonic Inclusion of Lemma \ref{HIT}.
This Theorem will be used in section \ref{NKSection} to calculate Dolbeault cohomology groups for nearly K\"ahler $6$-manifolds.

\begin{thm}[Harmonic Inclusion; Locally Constant Rank] 
\label{HITCR}
Let $(M,J, \langle \, , \, \rangle)$ be an almost Hermitian manifold such that $\mub$ has locally constant rank. 
For every bidegree $(p,q)$ we have 
\[
\Hh_\delb^{p,q} \cap \Hh_{\mub}^{p,q} \,  \subseteq  \, \Hh_{\delb_\mub}^{p,q}  \, \stackrel{\star}{\hookrightarrow} \, 
{\Ke(\delb_\mub)\over\Img(\delb_\mub)}\cong
H_{\Dol}^{p,q}(M).
\]
The injection $(\star)$ is an isomorphism if $\Img(\delb_\mub)^\perp = \Cok(\delb_\mub)$.
\end{thm}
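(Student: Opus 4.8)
The plan is to establish the chain of inclusions in three independent pieces and then verify the condition under which $(\star)$ becomes an isomorphism.

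\textbf{Step 1: the containment $\Hh_\delb^{p,q}\cap\Hh_\mub^{p,q}\subseteq\Hh_{\delb_\mub}^{p,q}$.} Here I would take $\omega$ with $\delb\omega=\delb^*\omega=\mub\omega=\mub^*\omega=0$. Since $\mub\omega=0$, by the $\mub$-Hodge decomposition of Theorem \ref{mubHodgeGLOBAL} we have $\Hh_\mub(\omega)=\omega$, so $\omega\in\Hh_\mub^{p,q}$. Then $\delb_\mub\omega=\Hh_\mub(\delb\omega)=\Hh_\mub(0)=0$ and likewise $\delb_\mub^*\omega=\Hh_\mub(\delb^*\omega)=0$, so $\omega$ lies in the kernel of $\Delta_{\delb_\mub}=\delb_\mub\delb_\mub^*+\delb_\mub^*\delb_\mub$, i.e. $\omega\in\Hh_{\delb_\mub}^{p,q}$. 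This step is routine once one recalls the defining formulas for $\delb_\mub$ and $\delb_\mub^*$.

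\textbf{Step 2: the injection $(\star)$ into $\Ke(\delb_\mub)/\Img(\delb_\mub)$.} By Theorem \ref{HDolidentifications}(1) this quotient is $H_{\Dol}^{p,q}(M)$, so it suffices to show the natural map $\Hh_{\delb_\mub}^{p,q}\to\Ke(\delb_\mub)/\Img(\delb_\mub)$ is injective. Given $\omega\in\Hh_{\delb_\mub}^{p,q}$, it already lies in $\Ke(\delb_\mub)$; suppose $\omega=\delb_\mub\beta$ for some $\beta\in\Hh_\mub^{p,q-1}$. Then, using that $\delb_\mub^*$ is the formal adjoint of $\delb_\mub$ (established just before the definition of $\Delta_{\delb_\mub}$, valid since $M$ is compact — I would assume compactness here, as in Theorem \ref{HIT}) and the self-adjointness of the orthogonal projection $\Hh_\mub$, one computes $\|\omega\|^2=\langle\omega,\delb_\mub\beta\rangle=\langle\delb_\mub^*\omega,\beta\rangle=0$, since $\delb_\mub^*\omega=0$. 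Hence $\omega=0$ and the map is injective. This is the exact analogue of the argument in Lemma \ref{HIT}.

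\textbf{Step 3: surjectivity of $(\star)$ under the stated hypothesis.} The target $\Ke(\delb_\mub)/\Img(\delb_\mub)$ is a quotient of subspaces of the (possibly infinite-dimensional) space $\Hh_\mub^{*,*}$. The orthogonal decomposition $\Ke(\delb_\mub)=\Hh_{\delb_\mub}^{p,q}\oplus\big(\Ke(\delb_\mub)\cap\Img(\delb_\mub^*)\big)$ would be immediate if $\Img(\delb_\mub)$ were closed, giving $\Img(\delb_\mub)^\perp\cong\Cok(\delb_\mub)$; then $\Hh_{\delb_\mub}^{p,q}$ maps isomorphically onto the quotient. The hypothesis $\Img(\delb_\mub)^\perp=\Cok(\delb_\mub)$ is precisely what is needed to run this argument without closedness: it forces $\Ke(\delb_\mub)=\Hh_{\delb_\mub}^{p,q}\oplus\Img(\delb_\mub)$ (as $\Ke(\delb_\mub)\ominus\Img(\delb_\mub)\subseteq\Img(\delb_\mub)^\perp\cap\Ke(\delb_\mub)=\Ke(\delb_\mub^*)\cap\Ke(\delb_\mub)=\Hh_{\delb_\mub}^{p,q}$), whence the map is onto. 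I expect \textbf{this last step to be the main subtlety}: on a general almost Hermitian manifold $\delb_\mub$ is a genuinely new operator on $\Hh_\mub$ and there is no a priori reason its image is closed, so one must be careful to phrase surjectivity exactly in terms of the hypothesis rather than appeal to elliptic theory. Finally I would remark, as the statement does, that in the finite-dimensional setting — e.g. a left-invariant structure on a Lie algebra — $\Img(\delb_\mub)$ is automatically closed and the hypothesis holds.
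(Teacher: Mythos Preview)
Your proof is correct and follows essentially the same approach as the paper's. The paper packages your Steps 2 and 3 into a single linear-algebra observation---that $\Hh_{\delb_\mub}=\Ke(\delb_\mub)\cap\Ke(\delb_\mub^*)=\Ke(\delb_\mub)\cap\Img(\delb_\mub)^\perp$ and that for any linear map $\varphi$ the natural map $\Img(\varphi)^\perp\hookrightarrow\Cok(\varphi)$ is injective (and an isomorphism exactly under the stated hypothesis)---whereas you unpack this via an explicit norm computation; your caution about compactness (needed for the adjointness $\Ke(\delb_\mub^*)=\Img(\delb_\mub)^\perp$) is well-placed and is implicit in the paper's argument as well.
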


\begin{proof}
The first containment follows since 
\begin{align*}
\Ke(\delb) \cap \Hh_{\mub}^{p,q} \,  &\subseteq  \, \Ke( \delb_\mub) \\ 
\Ke(\delb^*) \cap \Hh_{\mub}^{p,q} \,  &\subseteq  \, \Ke( \delb_\mub^*) .
\end{align*}The second follows since
\[
 \Ke(\Delta_{\delb_\mub})  =  \Ke(\delb_\mub) \cap \Ke(\delb_\mub^*) = \Ke(\delb_\mub) \cap \Img \left(\delb_\mub\right)^\perp,
\]
and the fact that for any linear map $\varphi$ of inner produce spaces, $\Img(\varphi)^\perp$ injects into $\Cok(\varphi)$. 
\end{proof}

\begin{rmk}
A natural question is whether in the compact case there are isomorphisms
\[\Hh_{\delb_\mub}^{p,q}\cong H_{\Dol}^{p,q}(M),\] which would 
show the spaces on the left-hand side are metric-independent.
Such an isomorphism would follow immediately from having a Hodge decomposition theorem with respect to the operator $\delb_\mub$. However, the operator $\delb_\mub$ is not elliptic, see Remark \ref{delbmubnonelliptic} for an example. Nevertheless, we will see that in the case of left-invariant almost complex structures on compact Lie groups and nilmanifolds, this isomorphism always holds.
\end{rmk}

We include one more lemma here concerning bidegree $(0,1)$, which will be used in Examples \ref{HDolMNI4} and \ref{HDolMNI6}. 

\begin{lem}\label{H01}
Let $\Mh$ be a compact almost Hermitian manifold such that $\mub$ has locally constant rank. 
Then \[H_{\Dol}^{0,1}(M)\cong \Hh_{\delb_\mub}^{0,1}.\]
\end{lem}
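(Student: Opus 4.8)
The plan is to read off the isomorphism from the explicit description of Dolbeault cohomology in Theorem \ref{HDolidentifications} together with the Hodge decomposition of the elliptic operator $\Delta_\delb$ in bidegree $(0,0)$, after first observing that $\mub$ is forced to be trivial in the relevant low bidegrees.

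First I would record the degree bookkeeping. Since $\mub$ has bidegree $(-1,2)$ and $\mub^{*}$ has bidegree $(1,-2)$, both operators vanish identically on $\cA^{0,0}$ and on $\cA^{0,1}$, because their targets are zero. Hence $\Hh_\mub^{0,0}=\cA^{0,0}$ and $\Hh_\mub^{0,1}=\cA^{0,1}$, the operator $\delb_\mub\colon\Hh_\mub^{0,0}\to\Hh_\mub^{0,1}$ is just $\delb\colon\cA^{0,0}\to\cA^{0,1}$, and on $\cA^{0,1}$ we have $\delb_\mub^{*}=\Hh_\mub\circ\delb^{*}=\delb^{*}$ since $\Hh_\mub$ is the identity on $\cA^{0,0}$. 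Using the $\mub$-Hodge decomposition $\cA^{0,2}=\mub(\cA^{1,0})\oplus\Hh_\mub^{0,2}$ from Theorem \ref{mubHodgeGLOBAL} (the $\mub^{*}$-summand being zero), or directly part (2) of Theorem \ref{HDolidentifications}, one identifies $\Ke(\delb_\mub)\cap\cA^{0,1}=K:=\{\omega\in\cA^{0,1}\colon \delb\omega\in\Img(\mub)\}$. Theorem \ref{HDolidentifications} then gives $H_{\Dol}^{0,1}(M)\cong K/\delb(\cA^{0,0})$, while unwinding the definition of $\delb_\mub$-harmonic form gives $\Hh_{\delb_\mub}^{0,1}=K\cap\Ke(\delb^{*})$.

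It remains to check that the map $K\cap\Ke(\delb^{*})\to K/\delb(\cA^{0,0})$ induced by the identity is an isomorphism. Injectivity is immediate: if $\omega=\delb f\in\Ke(\delb^{*})$ then $\|\delb f\|^{2}=\langle\delb^{*}\delb f,f\rangle=0$. For surjectivity, take $\omega\in K$; then $\delb^{*}\omega\in\cA^{0,0}$ is orthogonal to $\Hh_\delb^{0,0}=\Ke(\delb)\cap\cA^{0,0}$, since $\langle\delb^{*}\omega,f\rangle=\langle\omega,\delb f\rangle=0$ whenever $\delb f=0$. Applying the Hodge decomposition of Theorem \ref{delbHodge} in bidegree $(0,0)$, where $\Delta_\delb=\delb^{*}\delb$, we may write $\delb^{*}\omega=\Delta_\delb G_\delb(\delb^{*}\omega)=\delb^{*}\delb g$ with $g:=G_\delb(\delb^{*}\omega)$ a smooth function. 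Then $\omega-\delb g\in\Ke(\delb^{*})$, and it still lies in $K$: by the relation $\delb^{2}=-(\mub\del+\del\mub)$ from~(\ref{eq;drelations}) together with $\mub g=0$, we have $\delb(\omega-\delb g)=\delb\omega-\delb^{2}g=\delb\omega+\mub(\del g)\in\Img(\mub)$. Hence $\omega-\delb g\in K\cap\Ke(\delb^{*})=\Hh_{\delb_\mub}^{0,1}$ and represents the class $[\omega]$, which proves surjectivity.

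The only nontrivial ingredient is the Hodge decomposition for the elliptic operator $\Delta_\delb$ restricted to functions, which is already available; everything else is the observation that $\mub$ is invisible in bidegrees $(0,0)$ and $(0,1)$, plus the short computation keeping the corrected representative inside $K$. I do not expect a genuine obstacle here; the one point to handle with care is that, a priori, $\Hh_{\delb_\mub}^{0,1}$ need not be finite dimensional and $\delb_\mub$ is not elliptic, so the potential $g$ must be produced from Hodge theory for $\Delta_\delb$ rather than for $\Delta_{\delb_\mub}$. Alternatively, the same conclusion follows from Theorem \ref{HITCR} once one verifies its hypothesis $\Img(\delb_\mub)^{\perp}=\Cok(\delb_\mub)$ in bidegree $(0,1)$, i.e., the $L^{2}$-closedness of $\Img(\delb\colon\cA^{0,0}\to\cA^{0,1})$, which follows from the spectral gap of $\Delta_\delb$ on functions; the argument above avoids this functional-analytic detour.
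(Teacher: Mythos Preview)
Your proof is correct and follows essentially the same approach as the paper: both correct a representative $\omega$ by subtracting $\delb G_\delb(\delb^{*}\omega)$, using the Hodge decomposition for the elliptic operator $\Delta_\delb$ on $\cA^{0,0}$ to kill $\delb^{*}$. Your write-up is in fact more careful than the paper's, as you explicitly verify that the corrected representative remains in $K=\Ke(\delb_\mub)$ via the relation $\delb^{2}g=-\mub\del g$, a point the paper leaves implicit.
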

\begin{proof}
Let $\omega$ be a representative of an element in $H^{0,1}_{\Dol}\cong \Ke(\delb_\mub)/\Img(\delb)$. 
 Then $\omega':=\omega-\delb G_\delb \delb^* \omega$ is another representative, and 
 \[
 \delb^* \omega =  \Hh_\delb(\delb^*\omega) +  \Delta_\delb G_\delb \delb^* \omega   =   \delb^*  \delb G_\delb \delb^* \omega 
 \]
 since $\Hh_\delb(\delb^*\omega)=0$, by Theorem \ref{delbHodge}. Therefore, $\delb^* \omega'=0$, so $\omega' \in \Hh_{\delb_\mub}^{0,1}$.
\end{proof}

\begin{rmk}
 The above result shows that the space $\Hh_{\delb_\mub}^{0,1}$ is always metric-independent. 
 Note that, in contrast, and as shown by Holt and Zhang in \cite{HZ}, for some almost complex manifolds the classical space of $\delb$-harmonic forms
 $\Hh_\delb^{0,1}$ may depend on the metric.
\end{rmk}

\section{Lie groups and nilmanifolds}\label{SecLie}
A particularly useful set of examples is given by Lie algebra cohomology, which allows one to compute purely
geometric invariants for compact Lie groups and nilmanifolds, after solving finite-dimensional linear algebra problems.
In this section, we study the Fr\"{o}licher spectral sequence in the context of Lie algebras with arbitrary almost complex structures, and develop a harmonic theory for their Dolbeault cohomology in the non-integrable case.
We then translate the results on Lie-algebra Dolbeault cohomology to the geometric situation of compact Lie groups and nilmanifolds respectively, and deduce several results in that setting. An extension of these results to the case of solvmanifolds is presented in \cite{SiTo}.

\subsection{Dolbeault cohomology of Lie algebras}
Let $\mathfrak{g}$ be real 
Lie algebra of finite dimension $2m$ and let $J$ be an almost complex structure on its underlying real vector space, i.e.
an endomorphism $J:\mathfrak{g}\to \mathfrak{g}$ such that $J^2=-1$. This determines decompositions 
\[\mathfrak{g}_\CC=\mathfrak{g_\CC}^{1,0}\oplus\mathfrak{g_\CC}^{0,1}\text{ and }
\mathfrak{g}_\CC^\vee=(\mathfrak{g^\vee_\CC})^{1,0}\oplus(\mathfrak{g^\vee_\CC})^{0,1}
\]
on the complexified Lie algebra $\mathfrak{g}_\CC:=\mathfrak{g}\otimes_{\R}\CC$,
and on its dual $\mathfrak{g}_\CC^\vee$.
The exterior algebra 
\[\cA^*_{\mathfrak{g}_\CC}:=\bigoplus_{k\geq 0} \Lambda^k(\mathfrak{g}^\vee_\CC)\]
then becomes a bigraded algebra $\cA^*_{\mathfrak{g}_\CC}=\bigoplus \cA^{p,q}_{\mathfrak{g}}$ with
\[
\cA^{p,q}_{\mathfrak{g}}:=\Lambda^p\left((\mathfrak{g^\vee_\CC})^{1,0}\right) \wedge \Lambda^q \left((\mathfrak{g^\vee_\CC})^{0,1} \right).
\]
Recall that the differential 
on $\mathfrak{g}_\CC^\vee$ is defined as the negative of the dual of the Lie bracket
\[d|_{\mathfrak{g}^\vee_\CC}:=[-,-]^\vee:\mathfrak{g}_\CC^\vee\to \mathfrak{g}_\CC^\vee\wedge \mathfrak{g}_\CC^\vee,\]
and is extended uniquely to a derivation $d$ of $\cA^*_{\mathfrak{g}_\CC}$.
It follows that $d$ decomposes into four components
\[d=\mub +\delb +\del +\mu\]
with $|\mub|=(-1,2)$, $|\delb|=(0,1)$, etc., and these satisfy Equations (\ref{eq;drelations}) of section \ref{SecPreliminars}.
In particular, the pair $(\mathfrak{g},J)$ has a well-defined a $\mub$-cohomology, denoted by $H_\mub^{p,*}(\mathfrak{g},J)$, and a well-defined Hodge filtration $F$ as in Definition \ref{Hodgefildef}. This gives an obvious notion of \textit{Dolbeault cohomology}
\[H_{\Dol}^{p,q}(\mathfrak{g},J):=H^{q}(H_\mub^{p,*}(\mathfrak{g},J),\delb)\]
and the same proof of Theorem \ref{E1HDol} gives isomorphisms 
\[H_{\Dol}^{*,*}(\mathfrak{g},J)\cong E_1^{*,*}(\cA^*_{\mathfrak{g}_\CC},F)
\cong
{\left\{ \omega \in \cA^{p,q}_{\mathfrak{g}}\cap\Ke(\mub); \delb  \omega  \in\Img(\mub)\right\}
\over{
\left\{  \omega  \in \cA^{p,q}_{\mathfrak{g}}\text{ such that }  \omega  =\mub \alpha +\delb \beta \text{ with } \mub \beta =0\right\}
}}.
\]
A main advantage of this toy-model for the Fr\"{o}licher spectral sequence
is that
the computation of the spaces $E_r^{*,*}(\mathfrak{g},F)$ 
is a finite-dimensional linear algebra problem.
A first consequence is the following result.
Denote by 
\[
h^{p,q}(\mathfrak{g},J):=\dim H_{\Dol}^{p,q}(\mathfrak{g},J) \text{ and }
b^{n}(\mathfrak{g}):= \dim H^n(\cA^*_{\mathfrak{g}_\CC},d)
\]
the Dolbeault and complex Betti numbers of $(\mathfrak{g},J)$. Then:
\begin{prop}\label{FrolicherCoro}
Let $\mathfrak{g}$ be real 
Lie algebra of finite dimension $2m$ and $J$ an almost complex structure on its underlying real vector space. 
\begin{enumerate}
 \item For all $n\geq 0$ we have inequalities 
 \[\sum_{p+q=n} h^{p,q}(\mathfrak{g},J)\geq b^n(\mathfrak{g}).\]
 \item Let $\chi(\mathfrak{g}):=\sum (-1)^n b^n(\mathfrak{g})$ denote the Euler characteristic of $\mathfrak{g}$. Then
 \[\chi(\mathfrak{g})=\sum_{p,q=0}^{m}(-1)^{p+q}\,h^{p,q}(\mathfrak{g},J).\]
\end{enumerate}
\end{prop}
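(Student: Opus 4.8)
The plan is to derive both statements from the Fr\"{o}licher spectral sequence $\{E_r^{*,*}(\mathfrak{g},F)\}$ together with the identification $H_{\Dol}^{p,q}(\mathfrak{g},J)\cong E_1^{p,q}(\mathfrak{g},F)$ from the Lie-algebra analogue of Theorem \ref{E1HDol}, and the convergence $E_r^{p,q}(\mathfrak{g},F)\Longrightarrow H^{p+q}(\cA^*_{\mathfrak{g}_\CC},d)$. Both facts are completely formal consequences of having a bounded spectral sequence of finite-dimensional vector spaces; the only genuinely ``Lie-theoretic'' input is that the spaces $\cA^{p,q}_{\mathfrak{g}}$, and hence all pages $E_r^{p,q}$, are finite-dimensional, so everything in sight has a well-defined dimension.

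\emph{Part (1).} Fix $n\geq 0$. For each $r\geq 1$ and each $(p,q)$ with $p+q=n$ the differential $\delta_r$ has nonzero bidegree in the total degree-preserving sense only in that it lowers/raises filtration while keeping track of $p+q$ up to the usual shift; in any case passing from $E_r$ to $E_{r+1}$ is taking cohomology of a complex, so $\dim E_{r+1}^{p,q}\leq \dim E_r^{p,q}$. Summing over $p+q=n$ gives
\[
\sum_{p+q=n}\dim E_1^{p,q}(\mathfrak{g},F)\ \geq\ \sum_{p+q=n}\dim E_2^{p,q}(\mathfrak{g},F)\ \geq\ \cdots\ \geq\ \sum_{p+q=n}\dim E_\infty^{p,q}(\mathfrak{g},F).
\]
Since $F$ is bounded, $E_\infty^{p,q}$ is the associated graded of a finite filtration on $H^{n}(\cA^*_{\mathfrak{g}_\CC},d)$, so the last sum equals $b^n(\mathfrak{g})$. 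Combining with $H_{\Dol}^{p,q}(\mathfrak{g},J)\cong E_1^{p,q}(\mathfrak{g},F)$ yields $\sum_{p+q=n}h^{p,q}(\mathfrak{g},J)\geq b^n(\mathfrak{g})$.

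\emph{Part (2).} Here I use that the Euler characteristic of a bounded complex of finite-dimensional vector spaces is unchanged on passing to cohomology. Define, for each $r$,
\[
\chi_r:=\sum_{p,q\geq 0}(-1)^{p+q}\dim E_r^{p,q}(\mathfrak{g},F).
\]
Each differential $\delta_r$ preserves the parity of $p+q$ (it shifts $(p,q)$ by $(r,1-r)$ in the $\widetilde F$-grading, equivalently by a vector of total degree $1$, changing $p+q$ by an odd amount), so on each fixed total degree $E_{r+1}$ is the homology of a complex whose terms alternate the sign $(-1)^{p+q}$; hence $\chi_{r+1}=\chi_r$ for all $r$, and by boundedness the spectral sequence degenerates at a finite page, giving $\chi_1=\chi_\infty$. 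On the one hand $\chi_1=\sum_{p,q}(-1)^{p+q}h^{p,q}(\mathfrak{g},J)$ by the $E_1$-identification, and the sum is finite with $p,q$ ranging over $0,\dots,m$. On the other hand $\chi_\infty=\sum_n(-1)^n\dim\!\bigoplus_{p+q=n}E_\infty^{p,q}=\sum_n(-1)^n b^n(\mathfrak{g})=\chi(\mathfrak{g})$, again using that $E_\infty$ is the associated graded of the Hodge filtration on $H^*(\cA^*_{\mathfrak{g}_\CC},d)$. This gives the stated equality.

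The only point requiring any care — and the one I would state explicitly — is the bidegree of each $\delta_r$ and the resulting parity bookkeeping: one must check that $\delta_r$ sends $E_r^{p,q}$ into a summand with $p+q$ of opposite parity, so that each page is genuinely a $\mathbb{Z}/2$-graded complex and Euler characteristics are preserved. This is immediate from the description of the Fr\"{o}licher spectral sequence as that of the filtered dg-algebra $(\cA^*_{\mathfrak{g}_\CC},d,F)$, whose differential $d$ has total degree $1$; I do not expect any real obstacle beyond writing this down cleanly.
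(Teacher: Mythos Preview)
Your argument is correct and is exactly the standard spectral-sequence reasoning the paper invokes (the paper's own proof is a one-line appeal to ``classical consequence of the convergence of the above spectral sequence, together with the fact that all vector spaces are finite-dimensional''). One small wording slip: in Part~(2) you write that $\delta_r$ ``preserves the parity of $p+q$'', but you mean it \emph{reverses} the parity (it raises $p+q$ by $1$), which is precisely what makes $(E_r,\delta_r)$ a $\mathbb{Z}/2$-graded complex and forces $\chi_{r+1}=\chi_r$; your subsequent sentences make clear you understand this.
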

\begin{proof}
This is a classical consequence of the convergence of the above spectral sequence, 
together with the fact that all vector spaces are finite-dimensional.
\end{proof}

We next develop a harmonic theory for the Dolbeault cohomology of $(\mathfrak{g},J)$.
 We'll be rather brief on some details since the background and theory somewhat mimics the smooth manifold setting from Section \ref{SecHarm}.

Let $\langle-,-\rangle$ be a real inner product on $\mathfrak{g}$ such that $\langle Jx,J y \rangle=\langle x, y \rangle$, which may be extended to a Hermitian inner product on $\mathfrak{g}_\CC$.
Define the Hodge star operator  \[\star:\cA^{p,q}_{\mathfrak{g}}\longrightarrow \cA^{m-p,m-q}_{\mathfrak{g}}
\quad\text{by}\quad
\omega\wedge \star \ov \eta \ :=\langle \omega, \eta \rangle\Omega,\]
where $\Omega$ is the volume element determined by $J$ and the metric.

Let $\delta$ denote one of the components $\mub$, $\delb$, $\del$ or $\mu$ of $d$.
We then define
\[\delta^*:=- \star \delta \star\text{ and }
\Delta_\delta:=\delta\delta^*+\delta^*\delta.\]
We then have $\star \Delta_{\bar \delta} = \Delta_\delta \star$ for $\delta = \delb, \mub$ and define the space of 
\textit{$\delta$-harmonic forms} by
\[\Hh_\delta^{p,q}({\mathfrak{g}}):=\Ke(\Delta_\delta) \cap \cA^{p,q}_{\mathfrak{g}}.\]
As in Lemma \ref{mubHodge}, we have a $\mub$-Hodge decomposition
\[\cA^{p,q}_{\mathfrak{g}}= \mub\left( \cA^{p+1,q-2}_{\mathfrak{g}} \right)  \oplus \Hh_\mub^{p,q}({\mathfrak{g}})\oplus \mub^* \left( \cA^{p-1,q+2}_{\mathfrak{g}} \right) \]
into orthogonal spaces. On $\Hh_\mub^{*,*}(\mathfrak{g})$ define operators 
\[\delb_\mub:\Hh_\mub^{p,q}(\mathfrak{g})\to \Hh_\mub^{p,q+1}(\mathfrak{g})\text{ and }\delb_\mub^*:\Hh_\mub^{p,q}(\mathfrak{g})\to \Hh_\mub^{p,q-1}(\mathfrak{g})\]
by letting 
\[\delb_\mub(\alpha):=\Hh_\mub(\delb \alpha)\text{ and }\delb_\mub^*:=\Hh_\mub(\delb^* \alpha)\]
where $\delb^*:=-\star \del \star$ and $\Hh_\mub(\alpha)$ denotes the projection of $\alpha$ into the space of $\mub$-harmonic forms. As in Theorem \ref{HDolidentifications} we have
\[
{\Ke(\delb_\mub:\Hh_\mub^{p,q}(\mathfrak{g})\to \Hh_\mub^{p,q+1}(\mathfrak{g}))
\over 
\Img(\delb_\mub:\Hh_\mub^{p-1,q}(\mathfrak{g})\to \Hh_\mub^{p,q})(\mathfrak{g})
}
\stackrel{\cong}{\longrightarrow} 
 H_{\Dol}^{p,q}(\mathfrak{g},J)
 \] 
 
 Define the space of \textit{$\delb_\mub$-harmonic forms} as
\[\Hh_{\delb_\mub}^{p,q}(\mathfrak{g}):=\Ke(\Delta_{\delb_\mub})\cap \Hh_\mub^{p,q}(\mathfrak{g})=\Ke(\delb_\mub)\cap\Ke(\delb_\mub^*)\cap \Hh_\mub^{p,q}(\mathfrak{g}).\]

The following generalizes a result of \cite{Rollenske2} to the non-integrable setting.

\begin{lem}\label{adjointLie}
Assume that $H^{2m}(\mathfrak{g}_\CC)\cong\CC$. Then $\delb_\mub^*$ is adjoint to $\delb_\mub$.
\end{lem}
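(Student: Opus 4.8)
The essential point is that, in contrast with the smooth-manifold situation of the discussion following Theorem~\ref{mubHodgeGLOBAL}---where $\delb^*$ is the $\mathcal{L}_2$-adjoint of $\delb$ by Stokes' theorem---on the finite-dimensional complex $\cA^*_{\mathfrak{g}_\CC}$ the operator $\delb^*=-\star\del\star$ is \emph{not} automatically the adjoint of $\delb$ for the Hermitian inner product. The hypothesis $H^{2m}(\mathfrak{g}_\CC)\cong\CC$ is precisely what repairs this: since $\dim_\CC\cA^{2m}_{\mathfrak{g}_\CC}=1$ and $\cA^{2m+1}_{\mathfrak{g}_\CC}=0$, the hypothesis is equivalent to $d(\cA^{2m-1}_{\mathfrak{g}_\CC})=0$ (equivalently, $\mathfrak{g}$ is unimodular), and this plays the role of integration by parts.

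\emph{Reduction to $\delb$.} First I would observe that it suffices to show $\langle\delb\alpha,\beta\rangle=\langle\alpha,\delb^*\beta\rangle$ for all $\alpha\in\cA^{p,q}_{\mathfrak{g}}$ and $\beta\in\cA^{p,q+1}_{\mathfrak{g}}$, i.e.\ that $\delb^*$ is the honest inner-product adjoint of $\delb$. Indeed, the $\mub$-Hodge decomposition for $\mathfrak{g}$ (the analogue of Lemma~\ref{mubHodge}) is orthogonal, so the projection $\Hh_\mub$ onto $\mub$-harmonics is self-adjoint; hence for $\alpha\in\Hh_\mub^{p,q}(\mathfrak{g})$ and $\beta\in\Hh_\mub^{p,q+1}(\mathfrak{g})$,
\[
\langle\delb_\mub\alpha,\beta\rangle=\langle\Hh_\mub(\delb\alpha),\beta\rangle=\langle\delb\alpha,\Hh_\mub\beta\rangle=\langle\delb\alpha,\beta\rangle,
\]
and similarly $\langle\alpha,\delb_\mub^*\beta\rangle=\langle\Hh_\mub\alpha,\delb^*\beta\rangle=\langle\alpha,\delb^*\beta\rangle$, using $\Hh_\mub\alpha=\alpha$ and $\Hh_\mub\beta=\beta$.

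\emph{Adjointness of $\delb^*$ for $\delb$.} Here I would run the computation of Lemma~\ref{mub*adjoint}, with the vanishing $d(\cA^{2m-1}_{\mathfrak{g}_\CC})=0$ replacing the degree-forced vanishing used there. The form $\alpha\wedge\star\overline{\beta}$ has bidegree $(m,m-1)$, and on $\cA^{m,m-1}_{\mathfrak{g}}$ every component of $d$ other than $\delb$ lands in a bidegree with an index exceeding $m$, hence vanishes; so $d(\alpha\wedge\star\overline{\beta})=\delb(\alpha\wedge\star\overline{\beta})$, which is $0$ by hypothesis. Expanding with the Leibniz rule for $\delb$ gives $\delb\alpha\wedge\star\overline{\beta}=(-1)^{p+q+1}\,\alpha\wedge\delb(\star\overline{\beta})$. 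Then, using that $\star$ commutes with conjugation, that $\del$ and $\delb$ are complex conjugate, that $\delb^*=-\star\del\star$, and that $\star^2=(-1)^{k}$ on $\cA^{k}$, one checks $\delb(\star\overline{\beta})=-(-1)^{p+q}\,\star\overline{\delb^*\beta}$; substituting and collecting signs yields
\[
\langle\delb\alpha,\beta\rangle\,\Omega=\delb\alpha\wedge\star\overline{\beta}=\alpha\wedge\star\overline{\delb^*\beta}=\langle\alpha,\delb^*\beta\rangle\,\Omega,
\]
which completes the proof.

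The only real obstacle is conceptual rather than computational: recognizing that $H^{2m}(\mathfrak{g}_\CC)\cong\CC$ is the same as $d|_{\cA^{2m-1}}=0$, and that this is exactly the Lie-algebraic substitute for Stokes' theorem that makes $-\star\del\star$ an adjoint. Once this is in place the sign bookkeeping is routine and parallels Lemma~\ref{mub*adjoint}.
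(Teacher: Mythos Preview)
Your proposal is correct and follows essentially the same approach as the paper: first translate the hypothesis $H^{2m}(\mathfrak{g}_\CC)\cong\CC$ into the vanishing $d|_{\cA^{2m-1}}=0$, use this as a Stokes-substitute to show $\delb^*=-\star\del\star$ is the genuine adjoint of $\delb$ via the computation of Lemma~\ref{mub*adjoint}, and then pass to $\delb_\mub$ and $\delb_\mub^*$ using self-adjointness of the orthogonal projection $\Hh_\mub$. The only cosmetic difference is that the paper phrases the key vanishing as $\del|_{\cA^{m-1,m}}=0$ (arising from $\star\omega\wedge\bar\eta$) whereas you use the conjugate $\delb|_{\cA^{m,m-1}}=0$ (arising from $\alpha\wedge\star\bar\beta$); these are equivalent.
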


\begin{proof}
The condition $H^{2m}(\mathfrak{g}_\CC)\cong \CC$, the algebraic analogue of having a closed manifold, implies that $d$ vanishes on $\cA^{2m-1}_{\mathfrak{g}_\CC}$,  so that
$\del$ vanishes on $\cA^{m-1,m}_{\mathfrak{g}}$ as well. Then the usual argument, as in the smooth global setting, shows that $\delb^*$ is the adjoint of $\delb$ (c.f proof of Lemma \ref{mub*adjoint}). It follows that $\delb_\mub^*$ is adjoint to $\delb_\mub$, since the projection onto $\mub$-harmonics is self adjoint: for all $\omega \in \Hh^{p,q}_\mub(\mathfrak{g})$ and $\eta \in \Hh^{p,q-1}_\mub(\mathfrak{g})$, 
\[
\langle \Hh_\mub(\delb^* \omega) , \eta \rangle
=
\langle \omega , \delb \eta \rangle
=
\langle \omega , \Hh_\mub( \delb \eta) \rangle
=
\langle \omega , \delb_\mub \eta \rangle. \qedhere\]
\end{proof}

\begin{rmk}[c.f.\cite{Rollenske2}] \label{compctandnil}
 Note that the condition of the Lemma is satisfied when $\mathfrak{g}$ is the Lie algebra of a connected compact Lie group, and also for a nilpotent Lie group.
 Indeed, for any $\mathfrak{g}$ the top cohomology with values in the module $\cA^{2m}_{\mathfrak{g}_\CC}$  satisfies  $H^{2m}(\mathfrak{g}_\CC,\cA^{2m}_{\mathfrak{g}_\CC})\cong \CC$.
 Hence the condition $H^{2m}(\mathfrak{g}_\CC)\cong\CC$ is satisfied whenever $\mathfrak{g}$ acts trivially on 
 $\cA^{2m}(\mathfrak{g})$. This is the case for $\mathfrak{g}$ nilpotent.
 Lemma \ref{adjointLie} also applies to unimodular Lie algebras, for which $d\equiv 0$ on $\cA^{2m-1}_{\mathfrak{g}_\CC}$ (see for instance \cite{SiTo}).

\end{rmk}

\begin{thm} \label{LiedelbmubHodge}
Assume that $H^{2m}(\mathfrak{g}_\CC) \cong \CC$. Then for any almost complex structure $J$ on $\mathfrak g$, and all $(p,q)$, there is a $\delb_\mub$-Hodge decomposition
\[
\Hh_{\mub}^{p,q}(\mathfrak{g})= \delb_\mub\left( \Hh_{\mub}^{p,q-1}(\mathfrak{g})  \right) \oplus \Hh_{\delb_\mub}^{p,q}(\mathfrak{g})\oplus  \delb_\mub^* \left(  \Hh_{\mub}^{p,q+1}(\mathfrak{g}) \right) 
\]
and there are isomorphisms
\[
H_{\Dol}^{p,q}(\mathfrak{g},J)\cong\Hh_{\delb_\mub}^{p,q}(\mathfrak{g}).
\]
\end{thm}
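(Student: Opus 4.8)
The plan is to reduce the statement to the elementary Hodge decomposition of a cochain complex of finite-dimensional inner product spaces whose differential admits a metric adjoint, since in the Lie-algebra setting all of the analysis of Section \ref{SecHarm} has been replaced by linear algebra.

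First I would record the two structural facts that make $(\Hh_\mub^{p,*}(\mathfrak g),\delb_\mub)$ such a complex. That $\delb_\mub\circ\delb_\mub=0$ is proved exactly as in Theorem \ref{HDolidentifications}: the locally constant rank hypothesis used there is automatic here (a Lie algebra amounts to a single fibre), so the commuting square relating $\delb$ on $\cA^{*,*}_{\mathfrak g}$ to $\delb_\mub$ on $\Hh_\mub^{*,*}(\mathfrak g)$, together with the relations (\ref{eq;drelations}) forcing $\delb^2=0$ on $\mub$-cohomology, gives $\delb_\mub^2=0$. Moreover each $\Hh_\mub^{p,q}(\mathfrak g)$ is finite-dimensional and carries the Hermitian inner product inherited from $\cA^{p,q}_{\mathfrak g}$. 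The crucial input — and the only place the hypothesis $H^{2m}(\mathfrak g_\CC)\cong\CC$ is used — is Lemma \ref{adjointLie}, which guarantees that $\delb_\mub^*$ really is the metric adjoint of $\delb_\mub$ on $\Hh_\mub^{*,*}(\mathfrak g)$.

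With adjointness in hand the argument is formal. For any differential $D$ on a finite-dimensional inner product space $V$ with metric adjoint $D^*$ and Laplacian $\Delta_D=DD^*+D^*D$, one has $\Ke\Delta_D=\Ke D\cap\Ke D^*$ (from $\langle\Delta_D v,v\rangle=\|Dv\|^2+\|D^*v\|^2$), the orthogonal decomposition $V=\Img D\oplus\Ke\Delta_D\oplus\Img D^*$, and the composite $\Ke\Delta_D\hookrightarrow\Ke D\twoheadrightarrow H(V,D)$ is an isomorphism. Applying this with $V=\Hh_\mub^{p,q}(\mathfrak g)$ and $D=\delb_\mub$ yields precisely the asserted decomposition
\[
\Hh_{\mub}^{p,q}(\mathfrak{g})= \delb_\mub\!\left( \Hh_{\mub}^{p,q-1}(\mathfrak{g})  \right) \oplus \Hh_{\delb_\mub}^{p,q}(\mathfrak{g})\oplus  \delb_\mub^* \!\left(  \Hh_{\mub}^{p,q+1}(\mathfrak{g}) \right),
\]
together with $H^q\!\left(\Hh_\mub^{p,*}(\mathfrak g),\delb_\mub\right)\cong\Hh_{\delb_\mub}^{p,q}(\mathfrak g)$. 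Composing the latter with the isomorphism $H^q\!\left(\Hh_\mub^{p,*}(\mathfrak g),\delb_\mub\right)\cong H_{\Dol}^{p,q}(\mathfrak g,J)$ established just before Lemma \ref{adjointLie} (the Lie-algebra analogue of Theorem \ref{HDolidentifications}) gives $H_{\Dol}^{p,q}(\mathfrak g,J)\cong\Hh_{\delb_\mub}^{p,q}(\mathfrak g)$. I do not expect a genuine obstacle here; the one subtlety is that both this last identification and the equality $\Ke\Delta_{\delb_\mub}=\Ke\delb_\mub\cap\Ke\delb_\mub^*$ silently depend on the adjointness of Lemma \ref{adjointLie}, so without the hypothesis $H^{2m}(\mathfrak g_\CC)\cong\CC$ the operator $\delb_\mub^*=\Hh_\mub\circ(-\star\del\star)$ need not be the metric adjoint and the finite-dimensional Hodge argument would collapse.
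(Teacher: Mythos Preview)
Your proposal is correct and follows essentially the same approach as the paper, which simply writes ``This follows from linear algebra since $\delb_\mub^2=0$, as in the proof of Lemma \ref{mubHodge}.'' You have unpacked exactly what that sentence means: finite-dimensionality, $\delb_\mub^2=0$, and the adjointness of Lemma \ref{adjointLie} (which is where the hypothesis enters) reduce everything to the standard Hodge decomposition of a finite-dimensional cochain complex with inner product, just as Lemma \ref{mubHodge} does fibrewise for $\mub$.
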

\begin{proof}This follows from linear algebra since $\delb_\mub^2=0$, as in the proof of Lemma \ref{mubHodge}.
\end{proof}

By an argument identical to the proof of Proposition \ref{HhdelbmubSerre}, we find that
\[
\bar \star: \Hh_{\delb_\mub}^{p,q}(\mathfrak{g}) \to \Hh_{\delb_\mub}^{m-p,m-q}(\mathfrak{g})
\]
is an isomorphism, where $\bar \star$ is the $\star$-operator followed by conjugation. 
So, by Theorem \ref{LiedelbmubHodge}, we obtain Serre duality on the Dolbeault cohomology:
\begin{cor}[Serre duality]
Assume that $H^{2m}(\mathfrak{g}_\CC) \cong \CC$. For any almost complex structure $J$ on $\mathfrak g$ there are isomorphisms
\[H_{\Dol}^{p,q}(\mathfrak{g},J)\cong H_{\Dol}^{m-p,m-q}(\mathfrak{g},J).\]
\end{cor}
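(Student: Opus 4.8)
The statement follows by concatenating three isomorphisms, and the proof is a transcription of the manifold argument to the Lie-algebra model, where matters simplify because $\cA^*_{\mathfrak{g}_\CC}$ is a single finite-dimensional bigraded vector space (so the ``locally constant rank'' hypothesis of Proposition \ref{HhdelbmubSerre} is automatic). The plan is as follows. First, invoke Theorem \ref{LiedelbmubHodge}: since $H^{2m}(\mathfrak{g}_\CC)\cong\CC$ there is a $\delb_\mub$-Hodge decomposition of $\Hh_\mub^{*,*}(\mathfrak{g})$, yielding isomorphisms $H_{\Dol}^{p,q}(\mathfrak{g},J)\cong\Hh_{\delb_\mub}^{p,q}(\mathfrak{g})$ and $H_{\Dol}^{m-p,m-q}(\mathfrak{g},J)\cong\Hh_{\delb_\mub}^{m-p,m-q}(\mathfrak{g})$.

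The key intermediate step is to show that $\bar\star$, the Hodge star followed by conjugation, restricts to an isomorphism $\Hh_{\delb_\mub}^{p,q}(\mathfrak{g})\xrightarrow{\cong}\Hh_{\delb_\mub}^{m-p,m-q}(\mathfrak{g})$. Here I would repeat the proof of Proposition \ref{HhdelbmubSerre} line by line. From $\star^2=(-1)^k$ on $\cA^k_{\mathfrak{g}_\CC}$ together with $\delb^*=-\star\del\star$ and $\mub^*=-\star\mu\star$, one extracts the sign relations $\bar\star\,\delb^*=(-1)^k\delb\,\bar\star$, $\delb^*\,\bar\star=(-1)^{k+1}\bar\star\,\delb$, and the analogous pair with $\mub$ replacing $\delb$. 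Consequently $\bar\star$ preserves the orthogonal $\mub$-Hodge decomposition $\cA^{p,q}_{\mathfrak{g}}=\mub(\cA^{p+1,q-2}_{\mathfrak{g}})\oplus\Hh_\mub^{p,q}(\mathfrak{g})\oplus\mub^*(\cA^{p-1,q+2}_{\mathfrak{g}})$, hence commutes with the orthogonal projector $\Hh_\mub$ onto $\mub$-harmonics. Since $\delb_\mub=\Hh_\mub\circ\delb$ and $\delb_\mub^*=\Hh_\mub\circ\delb^*$ (and $\delb_\mub^*$ is genuinely the adjoint of $\delb_\mub$ by Lemma \ref{adjointLie}, using the hypothesis $H^{2m}(\mathfrak{g}_\CC)\cong\CC$), it follows that $\bar\star$ intertwines $\delb_\mub$ and $\delb_\mub^*$ up to sign, so it commutes with $\Delta_{\delb_\mub}$ and carries $\delb_\mub$-harmonic forms to $\delb_\mub$-harmonic forms; it is an isomorphism because $\bar\star^2=\pm\mathrm{Id}$.

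Finally, compose:
\[
H_{\Dol}^{p,q}(\mathfrak{g},J)\;\cong\;\Hh_{\delb_\mub}^{p,q}(\mathfrak{g})\;\xrightarrow[\cong]{\ \bar\star\ }\;\Hh_{\delb_\mub}^{m-p,m-q}(\mathfrak{g})\;\cong\;H_{\Dol}^{m-p,m-q}(\mathfrak{g},J).
\]
There is no real obstacle here beyond bookkeeping of signs; the only point that merits genuine attention is the claim that $\bar\star$ respects the $\mub$-Hodge decomposition, which is exactly where the fiberwise adjointness identity $\mub^*=-\star\mu\star$ (the Lie-algebra analogue of Lemma \ref{mub*adjoint}) is used, and this is precisely the content already established in the proof of Proposition \ref{HhdelbmubSerre}, so no new work is needed.
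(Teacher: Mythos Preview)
Your proposal is correct and follows essentially the same approach as the paper: identify $H_{\Dol}^{p,q}(\mathfrak{g},J)$ with $\Hh_{\delb_\mub}^{p,q}(\mathfrak{g})$ via Theorem \ref{LiedelbmubHodge}, then transport the proof of Proposition \ref{HhdelbmubSerre} verbatim to show $\bar\star$ gives an isomorphism $\Hh_{\delb_\mub}^{p,q}(\mathfrak{g})\cong\Hh_{\delb_\mub}^{m-p,m-q}(\mathfrak{g})$. The paper's own proof is nothing more than a pointer to these two results, so your expanded version is simply a faithful unpacking of what the paper intends.
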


\begin{rmk}
The above Serre duality result allows to reduce the number of terms in the formulas of Proposition \ref{FrolicherCoro},
in the case when $\mathfrak{g}$ is the Lie algebra of a connected compact or nilpotent Lie group.
In addition, by work of Milivojevi\'{c} \cite{MiliSerre}, Serre duality persists in every stage of the 
Fr\"{o}licher spectral sequence.
\end{rmk}

\subsection*{Compact Lie groups}
If $G$ is any Lie group with Lie algebra $\mathfrak{g}$, then the algebra $\cA^*_{\mathfrak{g}}$ is isomorphic to
the algebra $\cA_L^*(G)$ of left-invariant forms on $G$ and the  almost complex structure on $\mathfrak{g}$ defines a unique left invariant almost complex structure on $G$  (with every left invariant almost complex structure on $G$ occuring uniquely in this way). Moreover, the isomorphism
\[\cA^*_{\mathfrak{g}_\CC}\cong \cA^{*}_L(G)\otimes\CC\]
preserves the bigradings.
Note that by Lemma \ref{lem;Nmub}, the integrability of $J$ on $G$ is equivalent to $\mub\equiv 0$,
which occurs if and only if 
 $\mub\equiv 0$ in $\cA^{1,0}$.
\begin{defn}
We define the \textit{left-invariant Dolbeault cohomology} of $(G,J)$
by 
\[^LH_{\Dol}^{p,q}(G,J):=H_{\Dol}^{p,q}(\mathfrak{g},J).\]
The \textit{left-invariant Fr\"{o}licher spectral sequence} of $(G,J)$ is the spectral sequence
\[^LE_1^{p,q}(G,J):=E_1^{p,q}(\cA^*_{\mathfrak{g}_\CC},F)\Longrightarrow {}^LH^{p+q}(G,\CC).\]
\end{defn}

Denote by 
\begin{align*}
h^{p,q}_L(G,J):=\dim {}^LH_{\Dol}^{p,q}(G,J)\quad\text{ and }\quad
b^{n}_L(G):= {}^LH^n(G,\CC)
\end{align*}
the left-invariant Dolbeault and Betti numbers of $(G,J)$. Proposition \ref{FrolicherCoro} gives:
\begin{cor}\label{FrolicherCoroG}
Let $G$ be a real Lie group of dimension $2m$ with a left invariant almost complex structure $J$. 
\begin{enumerate}
 \item For all $n\geq 0$ we have inequalities 
 \[\sum_{p+q=n} h^{p,q}_L(G,J)\geq b^n_L(G).\]
 \item Let $\chi_L(G):=\sum (-1)^n b^n_L(G)$ denote the left-invariant Euler characteristic of $G$. Then
 \[\chi_L(G)=\sum_{p,q=0}^{m}(-1)^{p+q}\,h^{p,q}_L(G,J).\]
\end{enumerate}
\end{cor}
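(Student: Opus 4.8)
The plan is to read this off directly from Proposition \ref{FrolicherCoro} applied to the Lie algebra $\mathfrak{g}$ of $G$, using the standard dictionary between left-invariant objects on $G$ and algebraic objects on $\mathfrak{g}$. The one input to recall is the isomorphism of bigraded algebras $\cA^*_{\mathfrak{g}_\CC}\cong \cA^*_L(G)\otimes_\R\CC$ which, as noted just above, preserves the $(p,q)$-decomposition; it moreover intertwines the Chevalley--Eilenberg differential with the de Rham differential, and hence intertwines each of the four components $\mub,\delb,\del,\mu$ of $d$, since these are cut out degree-theoretically from $d$ together with the bigrading. In particular the complex $(\cA^*_L(G)\otimes\CC,d)$ of left-invariant forms is identified with $(\cA^*_{\mathfrak{g}_\CC},d)$.

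Granting this, I would argue as follows. By the definition of left-invariant Dolbeault cohomology, $h^{p,q}_L(G,J)=\dim{}^{L}H_{\Dol}^{p,q}(G,J)=\dim H_{\Dol}^{p,q}(\mathfrak{g},J)=h^{p,q}(\mathfrak{g},J)$; and by the identification of complexes above, $b^n_L(G)=\dim{}^{L}H^n(G,\CC)=b^n(\mathfrak{g})$, whence $\chi_L(G)=\sum_n(-1)^n b^n_L(G)=\chi(\mathfrak{g})$. Substituting these equalities into parts (1) and (2) of Proposition \ref{FrolicherCoro} gives precisely statements (1) and (2) of the corollary. (When $G$ is connected compact or nilpotent one may, if desired, also restrict the range of summation in (2) using the Serre duality remark following Theorem \ref{LiedelbmubHodge}.)

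There is no genuine obstacle here: all of the content was already packaged into Proposition \ref{FrolicherCoro}, whose proof only uses convergence of the Fr\"{o}licher spectral sequence of a bounded filtered complex of finite-dimensional spaces --- so that $h^{p,q}(\mathfrak{g},J)=\dim E_1^{p,q}\geq\dim E_\infty^{p,q}$ with $\sum_{p+q=n}\dim E_\infty^{p,q}=b^n(\mathfrak{g})$, and the Euler characteristic is preserved from the $E_1$-page to the $E_\infty$-page. The only point deserving a line of care is the routine bookkeeping that the bigrading-preserving isomorphism carries the Hodge filtration $F$ of Definition \ref{Hodgefildef} to the analogous filtration on left-invariant forms (immediate, since $F$ is built only from $\Ke(\mub)$ and the bigrading), so that the left-invariant Fr\"{o}licher spectral sequence of $(G,J)$ is literally $E_r(\cA^*_{\mathfrak{g}_\CC},F)$ and its asserted convergence to ${}^{L}H^*(G,\CC)$ transports.
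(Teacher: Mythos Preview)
Your proof is correct and takes essentially the same approach as the paper: the corollary is derived directly from Proposition~\ref{FrolicherCoro} via the identification $\cA^*_{\mathfrak{g}_\CC}\cong \cA^*_L(G)\otimes\CC$. If anything, you give more detail than the paper, which simply states that Proposition~\ref{FrolicherCoro} gives the result.
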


For the remaining of this subsection, assume that $G$ is a compact Lie group of dimension $2m$
with a left-invariant almost complex structure $J$.
Then the inclusion
\[\cA^*_{\mathfrak{g}}\cong \cA_L^*(G)\hookrightarrow \cA^*(G)\]
is known to be a quasi-isomorphism, so we have 
\[{}^LH^{*}(G,\CC)\cong H^{*}(G,\CC).\]
In particular, the
left-invariant Fr\"{o}licher spectral sequence of $G$ computes its complex cohomology
and Corollary \ref{FrolicherCoroG} applies with $b^n_L(G)=b^n(G)$ and $\chi_L(G)=\chi(G)$.

Furthermore, since $(G,J)$ is an almost complex manifold, it has an associated (non-left-invariant) Dolbeault cohomology
and Fr\"{o}licher spectral sequence. We have:

\begin{lem}\label{injGDol}
Let $G$ be a compact Lie group with a left-invariant almost complex structure $J$. For all $r\geq 0$, the inclusion $\cA_L(G) \hookrightarrow \cA(G)$ induces an injection
\[{}^LE_{r}^{*,*}(G,J)\longrightarrow  E_{r}^{*,*}(G,J),\]
which becomes an isomorphism at the $E_\infty$-stages.
\end{lem}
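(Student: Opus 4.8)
The plan is to construct a left inverse to the inclusion $\iota\colon\cA_L(G)\hookrightarrow\cA(G)$ at the level of filtered complexes, and then to argue formally on each page of the spectral sequence. First I would fix a Haar measure on $G$ of total mass one and define the averaging operator $A\omega:=\int_G L_g^*\omega\,dg$ for $\omega\in\cA(G)$, where $L_g$ denotes left translation by $g$. Since $J$ is left-invariant, each $L_g$ is an automorphism of the almost complex manifold $(G,J)$, so by Lemma~\ref{ss_functorial} (and the fact that pullbacks along morphisms of almost complex manifolds preserve the $(p,q)$-decomposition and commute componentwise with $d$), the operator $L_g^*$ is an endomorphism of $(\cA(G),d,F)$ that preserves each bidegree and commutes with $\mub,\delb,\del,\mu$. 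Integrating over $G$, the operator $A$ inherits all of these properties; in particular it preserves bidegrees and commutes with $\mub$, hence $A(F^p\cA^n)\subseteq F^p\cA_L^n$, so $A\colon(\cA(G),d,F)\to(\cA_L(G),d,F)$ is a morphism of filtered complexes. Because $L_g^*$ fixes left-invariant forms, $A\circ\iota=\mathrm{id}_{\cA_L(G)}$. (Note that $A$ is not an algebra map, but only its compatibility with $d$, the bidegrees and $F$ is needed.)

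Next I would invoke functoriality of the assignment $r\mapsto E_r(-,F)$ for morphisms of filtered complexes: from $A\circ\iota=\mathrm{id}$ one obtains $E_r(A)\circ E_r(\iota)=E_r(\mathrm{id})=\mathrm{id}$ on ${}^LE_r^{*,*}(G,J)$ for every $r\geq 0$. Hence each $E_r(\iota)\colon {}^LE_r^{*,*}(G,J)\to E_r^{*,*}(G,J)$ is a split injection. This already yields the injectivity statement for all $r$, and in particular shows $E_\infty(\iota)$ is injective.

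To upgrade the $E_\infty$-statement to an isomorphism I would use that, for $G$ compact, $\iota$ is a quasi-isomorphism, so $H(\iota)\colon {}^LH^*(G,\CC)\to H^*(G,\CC)$ is an isomorphism with inverse $H(A)$. Writing $F$ also for the filtrations induced on cohomology, $\iota$ being filtered gives $H(\iota)(F^p)\subseteq F^p$ and $A$ being filtered gives $H(A)(F^p)\subseteq F^p$; since $H(A)=H(\iota)^{-1}$, the latter inclusion reads $F^p\subseteq H(\iota)(F^p)$, so $H(\iota)(F^p)=F^p$ for all $p$, i.e.\ $H(\iota)$ is a strict filtered isomorphism. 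Because both spectral sequences converge ($F$ being bounded on $\cA(G)$ and on $\cA_L(G)$), the $E_\infty$-page is the associated graded of this filtration and $E_\infty(\iota)$ is precisely the induced map on associated gradeds of a strict filtered isomorphism, hence an isomorphism.

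The only genuinely substantive point — the one I would spell out carefully — is the compatibility of the averaging operator $A$ with the $(p,q)$-bigrading and with $\mub$, since this is exactly where left-invariance of $J$ is used: it makes every $L_g$ a morphism of almost complex manifolds, so Lemma~\ref{ss_functorial} applies fiberwise before integration. Everything downstream is formal: the splitting $A\circ\iota=\mathrm{id}$ gives page-by-page injectivity, and a splitting of a quasi-isomorphism is automatically strictly filtered, which promotes injectivity to an isomorphism on $E_\infty$.
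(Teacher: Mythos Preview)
Your proof is correct and uses the same key ingredient as the paper, namely the averaging operator with respect to Haar measure, which commutes with the bigrading and all components of $d$ because $J$ is left-invariant. The execution differs slightly: the paper proves injectivity by hand, invoking the explicit description $E_r=Z_r/B_r$ from the Appendix to show that a left-invariant form lying in $B_r$ already lies in ${}^LB_r$, whereas you package the averaging as a filtered retraction $A$ with $A\circ\iota=\mathrm{id}$ and let functoriality of the spectral sequence construction do the work on every page. Your treatment of the $E_\infty$-isomorphism is also a bit more careful: the paper simply asserts it follows from both spectral sequences converging to $H^*(G,\CC)$, while you observe that the existence of the filtered splitting forces $H(\iota)$ to be a strict filtered isomorphism, so the induced map on associated gradeds is an isomorphism. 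Both arguments are valid; yours avoids the Appendix machinery and makes the strictness point explicit.
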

\begin{proof}
Let 
\[
I(\omega) = \int_G L_g^* \omega
\] 
 be the averaging operator with respect to the normalized Haar measure, where $L_g$ is the left translation on $G$.
 Since the almost complex structure is preserved by $L_g$, $I$ commutes with the projections onto the $(p,q)$ spaces, and therefore $I$ commutes with the components of $d$.

Since the inclusion of left-invariant forms into all forms preserves bigradings, we have well-defined maps of spectral sequences 
 \[{}^LE_r^{*,*}(G,J)\longrightarrow E_r^{*,*}(G,J).\]
Therefore the isomorphism at the $E_\infty$-stages
follows from the fact that both spectral sequences converge to $H^*(G,\CC)$.
 
To prove that the above maps are injective for all $r\geq 1$,
we use the formulas for $E_r=Z_r/B_r$ given in the Appendix.
Let $\omega$ be left-invariant and assume that $\omega\in B_r(G,J)$. Since $\omega=I(\omega)$ and $I$ commutes with the components of 
$d$ one may check, using the description of $B_r$, 
that $\omega=I(\omega)\in {}^LB_r(G,J)$. Therefore $[\omega]$ is zero in ${}^LE_r(G,J)$.
\end{proof}

Note that the case $r=1$ above gives an injection of Dolbeault cohomologies
\[{}^LH_{\Dol}^{p,q}(G,J)\hookrightarrow H_{\Dol}^{p,q}(G,J).\]
This injection admits an alternative proof via the theory of harmonic forms as we will next see.
Denote by
\[\cA^*_L(G)\otimes\CC=\bigoplus \cA^{p,q}_L\]
the bigrading induced by $J$ on the algebra of left-invariant forms of $G$.
A left-invariant almost Hermitian metric on $G$ allows to define spaces
\[{}^L\Hh_{\delb_\mub}^{p,q}:=\Ke(\Delta_{\delb_\mub})\cap \cA^{p,q}_L \cong \Hh_{\delb_\mub}^{p,q}(\mathfrak{g})\]
of \textit{left-invariant $\delb_\mub$-harmonic forms}.

By Lemma \ref{HIT}, Theorem \ref{LiedelbmubHodge}, and its consequences, we obtain:
\begin{cor}\label{mainGcompact}
Let $G$ be a compact Lie group with a left-invariant almost complex
structure $J$. For all $(p,q)$ the following is satisfied:
\begin{enumerate}
 \item There is an inclusion
\[{}^L H_{\Dol}^{p,q}(G,J)\hookrightarrow H_{\Dol}^{p,q}(G,J).\]
\item We have Serre duality isomorphisms  
\[{}^L H_{\Dol}^{p,q}(G,J)\cong {}^L H_{\Dol}^{p,q}(G,J)^{m-p,m-q}.\]
\item For any left-invariant compatible metric on $(G,J)$, we have
a diagram of injective maps
\[
\xymatrix
{
{}^L\left( \Hh_\delb^{p,q} \cap \Hh_{\mub}^{p,q}\right)   \ar@{^{(}->}[r]   \ar@{^{(}->}[d] &  {}^L \Hh_{\delb_\mub}^{p,q}  \ar[r]^-{\cong} \ar@{^{(}->}[d]  &  
 {}^LH_{\Dol}^{p,q}(G,J) \ar@{^{(}->}[d]  \\
\Hh_\delb^{p,q} \cap \Hh_{\mub}^{p,q}  \ar@{^{(}->}[r] &  \Hh_{\delb_\mub}^{p,q}  \ar@{^{(}->}[r] & 
H_{\Dol}^{p,q}(G,J) 
}
\]
and an isomorphism ${}^L\Hh_{\delb_\mub}^{p,q}\cong {}^LH_{\Dol}^{p,q}(G,J)$.
In particular, the numbers $\dim {}^L \Hh_{\delb_\mub}^{p,q}$ are metric-independent.
\end{enumerate}
\end{cor}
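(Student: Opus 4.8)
The plan is to assemble Corollary \ref{mainGcompact} from the Lie-algebra harmonic theory already established, combined with the comparison map from left-invariant forms to all forms. First I would prove (1): the map \({}^LH_{\Dol}^{p,q}(G,J)\hookrightarrow H_{\Dol}^{p,q}(G,J)\) is exactly the \(r=1\) case of Lemma \ref{injGDol}, since \({}^LH_{\Dol}^{p,q}(G,J)={}^LE_1^{p,q}(G,J)\) and \(H_{\Dol}^{p,q}(G,J)=E_1^{p,q}(G,J)\) by Theorem \ref{E1HDol}. Nothing further is needed here. For (2), the Serre duality isomorphism, I would invoke the Corollary to Theorem \ref{LiedelbmubHodge} (Serre duality for Lie-algebra Dolbeault cohomology), noting that the hypothesis \(H^{2m}(\mathfrak{g}_\CC)\cong\CC\) holds for a connected compact Lie group as recorded in Remark \ref{compctandnil}; by definition \({}^LH_{\Dol}^{p,q}(G,J)=H_{\Dol}^{p,q}(\mathfrak g,J)\), so the statement transfers verbatim. (The notation \({}^LH_{\Dol}^{p,q}(G,J)^{m-p,m-q}\) in the statement is presumably a typo for \({}^LH_{\Dol}^{m-p,m-q}(G,J)\).)

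The substance is in (3). Here I would first identify the bottom row of the diagram with statements already proven for the almost Hermitian manifold \(G\): the left-invariant compatible metric makes \(G\) an almost Hermitian manifold whose \(\mub\) has locally constant rank (indeed it is constant, being left-invariant), so Theorem \ref{HITCR} supplies the maps \(\Hh_\delb^{p,q}\cap\Hh_\mub^{p,q}\hookrightarrow\Hh_{\delb_\mub}^{p,q}\hookrightarrow H_{\Dol}^{p,q}(G,J)\). For the top row, since all the relevant operators \(\mub,\mub^*,\delb_\mub,\delb_\mub^*\) preserve left-invariance, restricting to left-invariant forms gives the identification \({}^L\Hh_{\delb_\mub}^{p,q}:=\Ke(\Delta_{\delb_\mub})\cap\cA_L^{p,q}\cong\Hh_{\delb_\mub}^{p,q}(\mathfrak g)\) and likewise \({}^L(\Hh_\delb^{p,q}\cap\Hh_\mub^{p,q})\cong\Hh_\delb^{p,q}(\mathfrak g)\cap\Hh_\mub^{p,q}(\mathfrak g)\); then Lemma \ref{HIT}, Theorem \ref{HDolidentifications}, and Theorem \ref{LiedelbmubHodge} (all applied at the Lie-algebra level) give the top row, where the arrow \({}^L\Hh_{\delb_\mub}^{p,q}\xrightarrow{\cong}{}^LH_{\Dol}^{p,q}(G,J)\) is precisely the isomorphism of Theorem \ref{LiedelbmubHodge}. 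The vertical maps are the inclusions of left-invariant forms into all forms; commutativity of the squares is immediate because the inclusion preserves the bigrading and commutes with every component of \(d\) (hence with \(\mub\), \(\delb\), their adjoints, and \(\delb_\mub\)), and with the orthogonal projection \(\Hh_\mub\) since both the left-invariant and the global \(\mub\)-Hodge decompositions are orthogonal for the same metric and are compatible. Injectivity of the left and middle vertical arrows is then automatic once the right vertical arrow is injective (which is (1)) and the horizontal arrows on the top row are injective; alternatively, for \({}^L\Hh_{\delb_\mub}^{p,q}\hookrightarrow\Hh_{\delb_\mub}^{p,q}\) one simply notes a left-invariant form lying in \(\Ke(\Delta_{\delb_\mub})\) is zero iff it is zero. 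Finally, the isomorphism \({}^L\Hh_{\delb_\mub}^{p,q}\cong{}^LH_{\Dol}^{p,q}(G,J)\) is the Lie-algebra statement of Theorem \ref{LiedelbmubHodge} transported through \({}^L\Hh_{\delb_\mub}^{p,q}\cong\Hh_{\delb_\mub}^{p,q}(\mathfrak g)\), and metric-independence of \(\dim{}^L\Hh_{\delb_\mub}^{p,q}\) follows since the left-hand side is now isomorphic to the metric-independent space \({}^LH_{\Dol}^{p,q}(G,J)\).

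I expect the main obstacle to be purely bookkeeping: verifying that every square in the displayed diagram genuinely commutes, i.e. that passing to left-invariant forms is compatible not just with the differentials but with the \emph{metric-dependent} constructions \(\delb^*\), \(\Hh_\mub\), \(\delb_\mub\), \(\delb_\mub^*\), and the Green operators. The key point making this routine is that a left-invariant metric produces a left-invariant Hodge star, hence left-invariant adjoints, hence a left-invariant \(\mub\)-Hodge decomposition that is simply the restriction of the global one; once this is spelled out, every arrow in the diagram is the restriction of its global counterpart and commutativity is formal. No genuinely new estimate or ellipticity input is required beyond what Theorems \ref{HITCR} and \ref{LiedelbmubHodge} already provide.
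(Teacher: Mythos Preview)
Your proposal is correct and follows essentially the same approach as the paper, which simply writes ``By Lemma \ref{HIT}, Theorem \ref{LiedelbmubHodge}, and its consequences'' and leaves the reader to unpack exactly what you have unpacked; in particular, your use of Lemma \ref{injGDol} for (1), the Serre duality corollary for (2), and Theorem \ref{HITCR} together with Theorem \ref{LiedelbmubHodge} for (3) is precisely the intended argument, and your observation that the left-invariant metric forces all the adjoint and projection operators to preserve left-invariance is the only point requiring a moment's thought.
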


\begin{rmk}\label{allinvariants}
The spaces in the top row are all finite-dimensional and straightforward to compute from the Lie algebra of $G$. 
In favorable situations, one can deduce from the top row information concerning the bottom row.
For instance, in the next example below we conclude, via left-invariant computations, that the Dolbeault numbers of two different almost complex structures on the same manifold are distinct.
Note as well that in general, the inclusion ${}^L\left( \Hh_\delb^{p,q} \cap \Hh_{\mub}^{p,q}\right)\hookrightarrow {}^L\Hh_{\delb_\mub}^{p,q}$ is strict, as can be checked in the example below.
\end{rmk}

\begin{ex}\label{su2su2}
Let $G=SU(2)\times SU(2)$ and consider its Lie algebra, generated by
\[\{X_1,Y_1,Z_1,X_2,Y_2,Z_2\}\]
with the only non-trivial brackets given by 
\[[X_i,Y_i]=2Z_i,\,[Y_i,Z_i]=2X_i,\,[Z_i,X_i]=2Y_i\text{ for }i=1,2.\]
Let $J$ be the almost complex structure defined by 
\[J(X_1)=X_2, J(Y_1)=Y_2, J(Z_1)=Z_2, J(X_2)=-X_1, J(Y_2)=-Y_1, J(Z_2)=-Z_1.\]
Define $X:=X_2+iX_1$, $Y:=Y_2+iY_1$ and $Z:=Z_2+iZ_1$.
The non-trivial Lie brackets are given by
\[
[X,Y]=[\ov X,\ov Y]=k Z+\ov k \ov Z\text{ and }[X,\ov Y]=[\ov X,Y]=\ov k Z+ k\ov Z\]
where $k:=(1+i)$, as well as those given by the cyclic permutations of $X,Y,$ and $Z$. 
The dual Lie algebra is given by
\[\Lambda(x,y,z,\ov x,\ov y,\ov z)\]
and its differential is determined by
\[
\arraycolsep=4pt\def\arraystretch{1.4}
\begin{array}{llll}
\mub x=-k \ov y\ov z,&\delb x=-\ov k (y\ov z+\ov y z),& \del x=-k yz,&\mu x=0.\\
\mub y=-k \ov z\ov x,&\delb y=-\ov k (z\ov x+\ov z x),& \del y=-k zx,&\mu y=0.\\
\mub z=-k \ov x\ov y,&\delb z=-\ov k (x\ov y+\ov x y),& \del z=-k xy,&\mu z=0.\\
\end{array}
\]
together with the corresponding complex conjugate equations.
We obtain
\[{}^LH_{\Dol}^{*,*}(G,J)\cong 
\arraycolsep=4pt\def\arraystretch{1.4}
 \begin{array}{|c|c|c|c|c|}
 \hline
0&0&0&\CC\\
\hline
0&\CC&\CC^3&\CC^3
\\
\hline
\CC^3&\CC^3&\CC&0\\
\hline
\CC& 0&0&0\\
\hline
\end{array}\text{ and }
{}^LE_2^{*,*}(G,J)\cong
\arraycolsep=6pt\def\arraystretch{1.4}
 \begin{array}{|c|c|c|c|c|}
 \hline
0&0&0&\CC\\
\hline
0&\CC&0&0\\
\hline
0&0&\CC&0\\
\hline
\CC& 0&0&0\\
\hline
\end{array}
\]

This is an example of a nearly K\"{a}hler $6$-manifold, whose properties are further studied in Section \ref{NKSection}.
A known propery of nearly K\"{a}hler $6$-manifolds is that 
even locally they have no holomorphic functions except
constants \cite{Mus}.
In particular, the sheaf cohomology group $H^1(G,\Omega^0)$ is isomorphic to $H^1_{\dR}(G;\CC)$, which is trivial for $G$.
In contrast, from the table above we have $^{L}H_{\Dol}^{0,1}(G,J)\cong \CC^3$. This exhibits how Dolbeault cohomology can be a rich invariant even when there are no non-constant holomorphic functions.

Note that ${}^LE_2^{*,*}(G,J) \cong H^{*,*}_{dR}(G,J)$. Also, by Corollary \ref{NK6E2} proved below, we have 
$E_2^{*,*}(G,J) = H^{*,*}_{dR}(G,J)$ as well, so we may conclude that $E_2^{*,*}(G,J)= {}^LE_2^{*,*}(G,J)$.

The manifold $SU(2)\times SU(2)$ is diffeomorphic to $S^3\times S^3$, which can be endowed with an
integrable complex structure $J'$ via the holomorphic fibration
\[S^1\times S^1\longrightarrow S^3\times S^3\longrightarrow\CC\PP^1\times\CC\PP^1.\]
Its Fr\"{o}licher spectral sequence is known, given by:

\[H_{\Dol}^{*,*}(G,J')\cong 
\arraycolsep=6pt\def\arraystretch{1.4}
 \begin{array}{|c|c|c|c|c|}
 \hline
0&0&0&\CC\\
\hline
0&\CC&\CC&\CC
\\
\hline
\CC&\CC&\CC&0\\
\hline
\CC& 0&0&0\\
\hline
\end{array}\text{ and }
E_2^{*,*}(G,J')\cong
\arraycolsep=6pt\def\arraystretch{1.4}
 \begin{array}{|c|c|c|c|c|}
 \hline
0&0&0&\CC\\
\hline
0&\CC&0&0\\
\hline
0&0&\CC&0\\
\hline
\CC& 0&0&0\\
\hline
\end{array}
\]
In particular, by Lemma \ref{injGDol} we have
\[3=\dim {}^LH_{\Dol}^{0,1}(G,J)\leq \dim H_{\Dol}^{0,1}(G,J)\neq \dim H_{\Dol}^{0,1}(G,J')=1,\]
so the (non-left invariant) Dolbeault cohomologies of these two almost complex structures are distinct as well.
\end{ex}

\subsection*{Nilmanifolds}
Let $G$ be a nilpotent Lie group with Lie algebra $\mathfrak{g}$. 
If $\mathfrak{g}=\mathfrak{g}_\mathbb{Q}\otimes\mathbb{R}$
has a rational structure, then by \cite{Malcev} there exists a 
discrete subgroup $\Gamma$ such that the quotient $M=\Gamma \backslash G$ is a compact manifold, called a \textit{nilmanifold}.
The algebra $\cA^*_{\mathfrak{g}}$
may be regarded as the algebra of left-invariant forms on $G$.
There is an inclusion $\cA^*_{\mathfrak{g}}\hookrightarrow \cA^*_{\dR}(M)$ and by Nomizu's Theorem \cite{Nomizu}, it induces an isomorphism
\[H^*(\mathfrak{g})\cong H^*(M,\R).\]

A left $G$-invariant $J$ on $G$ induces an almost complex structure on $M$, which is integrable 
if and only if $J$ is integrable on the Lie group $G$ of $\mathfrak{g}$.

The inclusion
\[\cA^*_{\mathfrak{g_\CC}}\hookrightarrow \cA^*_{\dR}(M)\otimes\CC\]
is compatible with the bigradings, but in general, even in the integrable case it is not known if it induces 
an isomorphism on Dolbeault cohomology, although this is the case in several situations (see \cite{CFGU2}, \cite{ConFi}, \cite{FRR})
and conjectured to be true in the integrable case (see \cite{Rollenske}).
Recent results of Coelho, Placini and Stelzig \cite{Stelzigandfriends} imply that, on 4- or 6-dimensional nilmanifolds with left-invariant maximally non-integrable
almost complex structures, Dolbeault cohomology can never be computed using left-invariant
forms only.

\begin{defn}
Define the \textit{left-invariant Dolbeault cohomology} of $(M,J)$
by letting
\[{}^{L} H_{\Dol}^{*,*}(M,J):=H^{*,*}_{\Dol}(\mathfrak{g},J)\] 
Define the \textit{left-invariant
Fr\"{o}licher spectral sequence} for $(M,J)$ by
\[{}^{L}E_1^{*,*}(M,J):=E_1^{*,*}(\cA^*_{\mathfrak{g}_\CC},F).\]
\end{defn}

It follows that ${}^{L} H_{\Dol}^{*,*}(M,J)$ is a well-defined invariant of the almost complex structure of $M$
and the left-invariant Fr\"{o}licher spectral sequence converges to the complex cohomology $H^*(M,\CC)$.

Note that by Remark \ref{compctandnil}, Theorem \ref{LiedelbmubHodge} and its consequences
apply to nilmanifolds,
giving the following.

\begin{cor}
Let $M= \Gamma \backslash G$ be a nilmanifold of dimension $2m$ with a left-invariant almost complex structure $J$. 
Then for all $(p,q)$, the following is satisfied:
\begin{enumerate}
 \item There is an inclusion
\[{}^{L} H_{\Dol}^{p,q}(M,J)\hookrightarrow H_{\Dol}^{p,q}(M,J).\]
\item We have Serre duality isomorphisms  
\[{}^{L} H_{\Dol}^{p,q}(M,J)\cong {}^{L} H_{\Dol}^{p,q}(M,J)^{m-p,m-q}.\]
\item For any left-invariant compatible metric on $(M,J)$, 
we have
a diagram of injective maps
\[
\xymatrix
{
{}^{L}\left( \Hh_\delb^{p,q} \cap \Hh_{\mub}^{p,q}\right)   \ar@{^{(}->}[r]   \ar@{^{(}->}[d] &  {}^{L} \Hh_{\delb_\mub}^{p,q}  \ar[r]^-{\cong} \ar@{^{(}->}[d]  &  
 {}^{L} H_{\Dol}^{p,q}(M,J) \ar@{^{(}->}[d]  \\
\Hh_\delb^{p,q} \cap \Hh_{\mub}^{p,q}  \ar@{^{(}->}[r] &  \Hh_{\delb_\mub}^{p,q}  \ar@{^{(}->}[r] & 
H_{\Dol}^{p,q}(M,J) 
}
\]
and an isomorphism
\[{}^{L} H_{\Dol}^{p,q}(M,J)\cong {}^{L} \Hh_{\delb_\mub}^{p,q}.\]
In particular, the numbers $\dim {}^{L} \Hh_{\delb_\mub}^{p,q}$ are metric-independent.
\end{enumerate}
\end{cor}

\begin{ex}[Filiform nilmanifold]\label{Liealg}
Consider the nilpotent  real Lie algebra $\mathfrak{g}$ generated by $\{X_1, X_2, X_3, X_4\}$
with the only non-trivial Lie brackets given by 
\[
[X_1 ,X_i  ] = X_{i+1} \quad \textrm{for} \quad i=2,3.
\]
Define an almost complex structure on $\mathfrak{g}$ by letting  $JX_1 = X_2$ and $J X_3  =X_4$.

Consider the complexified Lie algebra $\mathfrak{g}_\CC$, generated by $\{A,\overline{A},B,\overline{B}\}$
with $A=X_1-iJX_1 = X_1 -iX_2$ and $B=X_3-iJX_3 = X_3 - iX_4 $.  The only non-trivial Lie brackets are then given by
\[
[A,B] = [A,\bar B] = [\bar A, B] = [\bar A, \bar B] = \frac{-1}{2i} (B -\bar B),
\]
and 
\[
[A,\bar A ] = i ( B + \bar B).
\]
By dualizing, there is a free commutative differential bigraded algebra 
\[\cA^{*,*}_{\mathfrak{g}}\cong \Lambda(a,b,\overline{a},\overline{b})\]
with generators of bidegrees $|a|=|b|=(1,0)$ and $|\overline{a}|=|\ov{b}|=(0,1)$
and the only non-trivial differentials on generators given by
\[
\mub b = \frac{1}{2i} \bar a \bar b, \quad \quad 
\delb b =\frac{1}{2i}  \left( a \bar b - b \bar a \right) - i a \bar a,   \quad \quad
\del b = \frac{1}{2i} ab,
\]
together with the corresponding complex conjugate equations.

Consider the 4-dimensional nilmanifold $M:= \Gamma \backslash G$, where $G$ has Lie algebra $\mathfrak{g}$.
Then
\[{}^{L}H_{\Dol}^{*,*}(M,J)
\cong 
\arraycolsep=4pt\def\arraystretch{1.4}
 \begin{array}{|c|c|c|c|}
 \hline
\,\,\,0\,\,\,&\CC&\CC \\
  \hline
 \CC^2&\CC^4&\CC^2 \\
   \hline
 \CC&\CC&0\\ 
 \hline
\end{array}\text{ and }
{}^{L}E_2(M,J)\cong
\arraycolsep=4pt\def\arraystretch{1.4}
 \begin{array}{|c|c|c|c|}
 \hline
\,\,\,0\,\,\,&\CC&\CC \\
  \hline
 \CC&\CC^2&\CC \\
   \hline
 \CC&\CC&\,\,\,0\,\,\,\\ 
 \hline
\end{array}
\]

The second page is already the complex cohomology $H^*(M,\CC)$ of $M$ (and in fact
  $E_2(M,J) \cong H^*(M,\CC)$ is guaranteed by Theorem \ref{46maxE2deg} below). But note that the first page ${}^{L}H^{*,*}_{\Dol}(M,J)$ contains more information, which is an invariant of the almost complex structure of $M$.
 Recall that the Fr\"{o}licher spectral sequence of a compact complex surface always degenerates at the first page. This example exhibits that this is not true anymore for non-integrable structures on compact four-dimensional manifolds.
  
We may consider another almost complex structure on $M$, defined by 
\[J'X_1=X_4\text{ and } J'X_2=X_3.\]
In this case, the left-invariant Fr\"{o}licher spectral sequence degenerates at $E_1$, and:
\[{}^{L}H_{\Dol}^{*,*}(M,J')
\cong E_{\infty}^{*,*}(M,J')
\cong 
\arraycolsep=4pt\def\arraystretch{1.4}
 \begin{array}{|c|c|c|c|}
 \hline
\,\,\,0\,\,\,&0&\CC \\
  \hline
 \CC^2&\CC^2&\CC^2 \\
   \hline
 \CC&0&0\\ 
 \hline
\end{array}.\]
Note that the left-invariant Dolbeault cohomology, as well as the bigrading induced on de Rham, allow one to distinguish the non-equivalent almost complex structures $J$ and $J'$. 
A simple spectral sequence argument shows that the tables computed above are the two only possible tables for the left-invariant Dolbeault cohomology of a non-integrable left-invariant almost complex structure on a real Lie algebra of dimension 4. We remark as well that the pair $(M,J')$ gives a first example for which the left-invariant Fr\"{o}licher spectral sequence degenerates at an earlier stage than the full
Fr\"{o}licher spectral sequence. Indeed, by the work of \cite{Stelzigandfriends} the latter degenerates precisely at the $E_2$-stage.

The manifold $M$ does not admit any integrable structure, as pointed out to us by Aleksandar Milivojevi\'{c}.
Indeed, since $b_1$ is even, by Kodaira's 
classification of surfaces it would then be K\"{a}hler, and hence it would be formal.
But every formal nilmanifold is a torus, which has $H^1_{\dR}(T^2,\CC)\cong \CC^4$.
In contrast, the above $E_2$-page tells us that $H^1_{\dR}(M,\CC)\cong \CC^2$.  
\end{ex}

\begin{ex}[Kodaira-Thurston manifold]\label{KTExample}
Consider the $4$-dimensional nilmanifold, defined as the quotient 
\[
KT:=H_\Z \times \Z \backslash H \times \R
\]
where $H$ is the $3$-dimensional Heisenberg Lie group, $H_\Z$ is the integral subgroup, and the action is on the left. 
 Its Lie algebra is spanned by $X,Y,Z,W$ with bracket $[X,Y] = -Z$.
On the dual basis $x,y,z,w$, the only non-zero differential is $dz = xy$. 

Consider the non-integrable $J$ given by 
$J(W) = X$ and $J(Z) = Y$, and let $A := X-iJX = X+iW$, and $B := Y-iJY = Y+iZ$, be a basis for the invariant $(1,0)$-vectors. The non-trivial brackets are 
\[
[A,B]  = [A, \bar B] = [ \bar A, B] = [\bar A, \bar B] = -Z = \frac{-1}{2i} \left( B - \bar B \right).
\]
Letting $a,b$ be dual to $A,B$, it follows that the only non-zero components of $d$ in degree one are 
\[
\del b =   \frac{1}{2i}  \, ab ,\quad 
\delb b = \frac{1}{2i}  \, ( a \bar b - b \bar a ),\quad 
\mub b  =   \frac{1}{2i}  \, \bar a \bar b,
\]
and the conjugate equations. This gives:
\[{}^{L} H_{\Dol}^{*,*}(KT,J)\cong E_\infty^{*,*}(KT,J)\cong 
\arraycolsep=4pt\def\arraystretch{1.4}
 \begin{array}{|c|c|c|c|}
 \hline
\,\,\,0\,\,\,&\CC&\CC \\
  \hline
 \CC^2&\CC^4&\CC^2 \\
   \hline
 \CC&\CC&0\\ 
 \hline
\end{array}.
\]
A simple spectral sequence argument shows that the above is the only possible table for the left-invariant Dolbeault cohomology of a left-invariant non-integrable structure on $KT$.

The Kodaira-Thurston manifold also has an integrable structure $J'$ defined by $J'(X) = Y$ and $J'(Z)=W$. The Dobeault cohomology
is well-known (see for instance \cite{C}), given by:
\[{}^{L} H_{\Dol}^{*,*}(KT,J')\cong H_{\Dol}^{*,*}(KT,J')\cong E_\infty^{*,*}(KT,J')\cong 
\arraycolsep=4pt\def\arraystretch{1.4}
 \begin{array}{|c|c|c|c|}
 \hline
\,\,\,\CC\,\,\,&\CC&\CC \\
  \hline
 \CC^2&\CC^2&\CC^2 \\
   \hline
 \CC&\CC&\CC\\ 
 \hline
\end{array}.
\]
Again, any left-invariant integrable structure will have the same table for left-invariant Dolbeault cohomology as the one given by $J'$.
One may easily define a continuous family of almost complex structures from the integrable $J'$ to the non-integrable $J$ above. 
Observing that
\[H^{1,1}_{\Dol}(KT,J')\cong\CC^2\subsetneq \CC^4\cong {}^{L} H^{1,1}_{\Dol}(KT,J)\subseteq H^{1,1}_{\Dol}(KT,J)\]
we deduce that, in contrast with the integrable case, Dolbeault cohomology is not upper semi-continuous for small deformations.

\end{ex}

\section{Maximally non-integrable and nearly K\"{a}hler $6$-manifolds} \label{maxnonint}

In this section, we define the condition for an almost complex structure to be \emph{maximally non-integrable}, and prove for manifolds of dimension $4$ and $6$ that this condition implies that the Fr\"{o}licher spectral sequence degenerates at the second stage. 
A particularly well-behaved family of maximally non-integrable manifolds of dimension $6$ is that of nearly K\"{a}hler manifolds, which are studied in full detail. We derive new algebraic identities for nearly K\"{a}hler $6$-manifolds and use these to compute their Dolbeault cohomology. 
In particular, we conclude that the Dolbeault cohomology groups contain strictly more information than the de Rham cohomology groups and their $(p,q)$-grading. On the other hand, we see that the groups are rather restricted.

\begin{defn} \label{defnmaxnonint}
An almost complex structure $J$ on a manifold $M$ is \emph{maximally non-integrable} if 
the Nijenhuis tensor $N_J:T_x M  \otimes T_x M \to T_xM$ has maximal rank at all points $x \in M$.
\end{defn}

Recall that $\mub: \cA^{p,q} \to \cA^{p-1, q+2} $ is linear over functions and we let 
\[
\cA^{p,q}_x := \bigwedge^p \left(T^*_x M \ot \CC\right)^{1,0}  \otimes  \bigwedge^q \left(T^*_x M \ot \CC \right)^{0,1}
\]
denote the fiber over $x$. 

\begin{lem}
An almost complex structure is maximally non-integrable if and only if
the map
\[
\mub:\cA^{1,0}_x \longrightarrow \cA^{0,2}_x
\]
has maximal rank for all $x \in M$. 
\end{lem}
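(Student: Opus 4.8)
The plan is to reduce the statement about maximal rank of the Nijenhuis tensor to the statement about maximal rank of $\mub$ on $(1,0)$-forms, using Lemma \ref{lem;Nmub}, which identifies $\mu+\mub$ with $-\tfrac14(N_J\otimes\mathrm{Id}_\CC)^*$ extended as a derivation. First I would observe that since $N_J$ is linear over functions and antisymmetric, it is determined by a bundle map $N_J:\bigwedge^2 TM\to TM$, equivalently by its complex-linear extension $\bigwedge^2(TM\otimes\CC)\to TM\otimes\CC$. I would then compute its type decomposition: the third equality in the proof of Lemma \ref{lem;Nmub} shows that for real vectors $X,Y$ one has $N_J(X,Y)=4\,\pi_{0,1}[\pi_{1,0}X,\pi_{1,0}Y]$ up to the sign/normalization there, so on the complexification $N_J$ vanishes unless both inputs are of type $(1,0)$ (resp.\ the conjugate statement for $(0,1)$ inputs), and its image on $(1,0)\wedge(1,0)$ lies in $T^{0,1}M$. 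Thus the only nonzero components of $N_J\otimes\mathrm{Id}_\CC$, as a map $\bigwedge^2(T_xM\otimes\CC)\to T_xM\otimes\CC$, are $\bigwedge^2 T^{1,0}_xM\to T^{0,1}_xM$ and its conjugate.

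Next I would dualize. The operator $\mub$ on $\cA^{1,0}_x=(T^{*}_xM\otimes\CC)^{1,0}$ with values in $\cA^{0,2}_x=\bigwedge^2(T^*_xM\otimes\CC)^{0,1}$ is, by Lemma \ref{lem;Nmub} restricted to degree $1$ (where the derivation extension is irrelevant), exactly $-\tfrac14$ times the transpose of the component $\bigwedge^2 T^{1,0}_xM\to T^{0,1}_xM$ of $N_J$ (the $\mu$ part contributes nothing on $(1,0)$-forms). Since a linear map and its transpose have the same rank, $\mub:\cA^{1,0}_x\to\cA^{0,2}_x$ has the same rank as the map $\bigwedge^2 T^{1,0}_xM\to T^{0,1}_xM$ induced by $N_J$. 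By the conjugation symmetry the two nonzero blocks of $N_J\otimes\mathrm{Id}_\CC$ are complex conjugates of each other and hence have equal rank, so the rank of $N_J$ at $x$ (i.e.\ the rank of the full bundle map $\bigwedge^2 T_xM\to T_xM$) is twice the rank of the $\bigwedge^2 T^{1,0}_xM\to T^{0,1}_xM$ block, which equals twice the rank of $\mub$ on $\cA^{1,0}_x$. The maximal possible value of the latter rank is $\min(\dim\cA^{1,0}_x,\dim\cA^{0,2}_x)=\min\bigl(m,\binom{m}{2}\bigr)$, a point-independent number; so $N_J$ has maximal rank at all $x$ if and only if $\mub:\cA^{1,0}_x\to\cA^{0,2}_x$ has maximal rank at all $x$, which is the claim.

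The main obstacle I anticipate is purely bookkeeping rather than conceptual: pinning down precisely what ``maximal rank'' means for $N_J$ and checking that the upper bound attained by $N_J$ corresponds, block by block, to the upper bound attained by $\mub$ on $\cA^{1,0}_x$. One has to be careful that $N_J$ is genuinely a section of $\mathrm{Hom}(\bigwedge^2 TM,TM)$ and that only the $(1,0)\wedge(1,0)\to(0,1)$ and conjugate blocks survive, so that ``maximal rank of $N_J$'' really is ``$2\cdot(\text{maximal rank of the }(1,0)\text{-block})$'' with no interference from other types; this is where I would spend a sentence or two confirming the type computation from the proof of Lemma \ref{lem;Nmub}. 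Everything else — the transpose-preserves-rank step and the conjugation symmetry — is routine linear algebra done fiberwise, with no analysis involved since $\mub$ is tensorial.
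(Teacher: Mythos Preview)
Your proposal is correct and follows essentially the same route as the paper: both arguments use Lemma~\ref{lem;Nmub} to identify the rank of $N_J$ with the rank of $\mub+\mu$ on $1$-forms, then split by bidegree and use the conjugation symmetry between the $(1,0)$ and $(0,1)$ blocks to reduce to $\mub$ on $\cA^{1,0}_x$ alone. You spell out the type decomposition of $N_J$ on tangent vectors before dualizing, whereas the paper dualizes first and then splits, but the linear algebra is identical.
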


\begin{proof} The pointwise rank of $N_J$ is equal to the rank of the dual $(N_J \otimes Id)^*$ of the complexification of $N_J$.
So, by Lemma \ref{lem;Nmub}, this rank equals the pointwise rank of the restriction of $\mub+ \mu$  to degree one. Since $\mub$ and $\mu$ are 
supported in bidegrees $(1,0)$ and $(0,1)$, respectively, the rank is maximal if and only if $\mub:\cA^{1,0}_x \longrightarrow \cA^{0,2}_x$
has maximal rank for all $x \in M$. 
\end{proof}

The following two Lemmas are useful in low dimensions.

\begin{lem} \label{4maxnonint}
On an almost complex $4$-manifold the following are equivalent:
\begin{enumerate}
 \item The almost complex structure is maximally non-integrable.
 \item $\mub|_{1,0}$ is a fiberwise surjection
 and $\mub|_{2,0}$ is a fiberwise injection.
 \item For all $x \in M$, the table for the fiberwise dimensions of $H_{\mub,x}^{*,*}$ is given by:
 \[
\arraycolsep=5pt\def\arraystretch{1.3}
 \begin{array}{|c|c|c|c|c|}
 \hline
  
   0  & 1 &1  \\\hline
  2  &4 & 2 \\\hline
1 & 1 &  0 \\
\hline
\end{array}
\]
\end{enumerate}
In particular, on $4$-manifolds, if $J$ is maximally non-integrable then $\mub$ has locally constant rank.
\end{lem}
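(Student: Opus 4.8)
The plan is to reduce the whole statement to the two bidegrees in which $\mub$ can be non-zero. On a $4$-manifold we have $m=2$, so the constraint $q+2\le 2$ in $\mub\colon\cA^{p,q}_x\to\cA^{p-1,q+2}_x$ forces $q=0$; since moreover $\mub$ vanishes on functions, the only possibly non-zero components of $\mub$ are
\[
\mub|_{1,0}\colon\cA^{1,0}_x\longrightarrow\cA^{0,2}_x
\qquad\text{and}\qquad
\mub|_{2,0}\colon\cA^{2,0}_x\longrightarrow\cA^{1,2}_x,
\]
of fiber dimensions $2\to 1$ and $1\to 2$ respectively. Setting $r_1=\operatorname{rank}\mub|_{1,0}$ and $r_2=\operatorname{rank}\mub|_{2,0}$, I would first carry out a short bookkeeping of the kernels and images defining $H^{p,q}_{\mub,x}$, using that all remaining components of $\mub$ are identically zero; this gives $\dim H^{p,q}_{\mub,x}=\dim\cA^{p,q}_x$ for every $(p,q)$ except
\[
\dim H^{1,0}_{\mub,x}=2-r_1,\quad \dim H^{0,2}_{\mub,x}=1-r_1,\quad \dim H^{2,0}_{\mub,x}=1-r_2,\quad \dim H^{1,2}_{\mub,x}=2-r_2 .
\]
The displayed table is thus exactly the case $r_1=1$, $r_2=1$, i.e.\ $\mub|_{1,0}$ surjective and $\mub|_{2,0}$ injective, which establishes $(2)\Leftrightarrow(3)$.

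For the equivalence with $(1)$, recall from the preceding Lemma that maximal non-integrability is equivalent to $\mub|_{1,0}$ having maximal rank, and for a map $\CC^2\to\CC$ maximal rank is the same as surjectivity. Hence $(1)$ and $(2)$ both assert that $\mub|_{1,0}$ is surjective, and the only thing left to prove is that on a $4$-manifold surjectivity of $\mub|_{1,0}$ already forces $\mub|_{2,0}$ to be injective. Here I would use that $\mub$ is a derivation: pick a basis $\alpha_1,\alpha_2$ of $\cA^{1,0}_x$ and a basis $\ov\beta$ of the line $\cA^{0,2}_x$, and write $\mub\alpha_i=c_i\ov\beta$. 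Since $\cA^{2,0}_x$ is the line spanned by $\alpha_1\wedge\alpha_2$, the Leibniz rule for $\mub$ gives
\[
\mub(\alpha_1\wedge\alpha_2)=\mub\alpha_1\wedge\alpha_2-\alpha_1\wedge\mub\alpha_2=(c_1\alpha_2-c_2\alpha_1)\wedge\ov\beta .
\]
Surjectivity of $\mub|_{1,0}$ means $(c_1,c_2)\neq(0,0)$, so $c_1\alpha_2-c_2\alpha_1$ is a non-zero element of $\cA^{1,0}_x$; since $\ov\beta$ spans the line $\cA^{0,2}_x$, wedging by $\ov\beta$ is injective on $\cA^{1,0}_x$ (it is, up to scale, $\gamma\mapsto\gamma\wedge\overline{\alpha_1}\wedge\overline{\alpha_2}$, an isomorphism onto $\cA^{1,2}_x$), so $\mub(\alpha_1\wedge\alpha_2)\neq 0$ and $\mub|_{2,0}$ is injective, completing $(1)\Rightarrow(2)$.

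The final assertion is then immediate: under maximal non-integrability we have $r_1=r_2=1$ at every point of $M$ and all other components of $\mub$ vanish identically, so $\mub\colon\cA^{p,q}_x\to\cA^{p-1,q+2}_x$ has constant rank in $x$ for every bidegree $(p,q)$. I expect the derivation computation in the third paragraph to be the only step with any content — in particular the non-degeneracy of the wedge pairing $\cA^{1,0}_x\otimes\cA^{0,2}_x\to\cA^{1,2}_x$, which is what transfers surjectivity of $\mub|_{1,0}$ into injectivity of $\mub|_{2,0}$; everything else is dimension counting.
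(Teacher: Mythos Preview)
Your proof is correct. The only substantive difference from the paper's argument is the step linking surjectivity of $\mub|_{1,0}$ to injectivity of $\mub|_{2,0}$: the paper invokes fiberwise Serre duality (Proposition~\ref{SerreDuality}), observing that $\Hh^{0,2}_{\mub,x}=0$ if and only if $\Hh^{2,0}_{\mub,x}=0$ via the $\bar\star$-isomorphism, whereas you bypass the choice of metric entirely and argue directly from the Leibniz rule for $\mub$ together with the non-degeneracy of the wedge pairing $\cA^{1,0}_x\otimes\cA^{0,2}_x\to\cA^{1,2}_x$. Your route is more elementary and is in fact exactly the style of argument the paper itself uses one lemma later, in the $6$-dimensional case (Lemma~\ref{6maxnonint}), where the authors work in a local frame and expand $\mub(\xi_i\wedge\xi_j)$ by the derivation property. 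The Serre-duality argument has the virtue of being a one-line appeal to an earlier result and generalizes uniformly to any bidegree; your argument has the virtue of being metric-free and making the mechanism explicit.
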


\begin{proof} The latter two conditions are clearly equivalent. In dimension $4$, condition $(1)$ holds if and only if $\mub: \cA^{1,0} \to \cA^{0,2}$ is fiberwise surjective, which occurs if and only if $\Hh^{0,2}_{\mub,x}=0$ on every fiber. By Serre duality, Proposition \ref{SerreDuality}, this occurs if and only if $\Hh^{2,0}_{\mub,x}=0$ on every fiber, which is equivalent to $\mub: \cA^{2,0} \to \cA^{1,2} $ being fiberwise injective, so $(1)$ is equivalent to $(2)$ and $(3)$. The last claim follows from condition $(3)$, since the 
rank of $\mub$ restricted to $(p,q)$ can be computed from the fiberwise Hodge decomposition to be equal to one in bidegrees $(1,0)$ and $(2,0)$, and zero otherwise.
\end{proof}

\begin{rmk}
There are nontrivial topological obstructions to admitting a nowhere integrable almost complex structure on a $4$-manifold. Armstrong \cite{Arm} showed that if $(M, J)$ is a $4$-dimensional compact almost complex manifold with the Nijenhuis tensor non-vanishing at each point, then the signature and Euler characteristic of $M$ satisfy $5\chi(M) + 6\sigma(M)= 0$.
\end{rmk}

\begin{lem} \label{6maxnonint}
On an almost complex $6$-manifold the following are equivalent:
\begin{enumerate}
 \item The almost complex structure is maximally non-integrable.
 \item $\mub|_{1,0}$ is a fiberwise isomorphism.
 \item For all $x \in M$, the table for the fiberwise dimensions of $H_\mub^{*,*}$ is given by:
\[\arraycolsep=5pt\def\arraystretch{1.3}
\begin{array}{|c|c|c|c|c|}
 \hline
 0  &0 & 0  &  1\\\hline
 0&  6  & 8 & 3 \\\hline
 3 & 8 & 6& 0 \\\hline
1& 0 & 0 &  0 \\
\hline
\end{array}\]
\end{enumerate}
In particular, on $6$-manifolds, if $J$ is maximally non-integrable then $\mub$ has locally constant rank.
\end{lem}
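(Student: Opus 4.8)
The plan is to dispatch $(1)\Leftrightarrow(2)$ at once and then reduce $(2)\Leftrightarrow(3)$ to a finite linear-algebra computation in a single explicit model, using Serre duality to halve the work. For $(1)\Leftrightarrow(2)$: by the Lemma characterizing maximal non-integrability through $\mub|_{\cA^{1,0}}$, the structure $J$ is maximally non-integrable exactly when $\mub\colon\cA^{1,0}_x\to\cA^{0,2}_x$ has maximal rank at every $x$; since here $\dim_\CC\cA^{1,0}_x=\binom{3}{1}=3=\binom{3}{2}=\dim_\CC\cA^{0,2}_x$, maximal rank means precisely that $\mub|_{\cA^{1,0}_x}$ is an isomorphism.

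For $(2)\Rightarrow(3)$ I would first record that, for degree reasons, $\mub$ vanishes on $\cA^{p,q}_x$ whenever $p=0$ or $q\ge 2$, so on each fibre the only nonzero pieces of $\mub$ are $\cA^{p,0}_x\to\cA^{p-1,2}_x$ and $\cA^{p,1}_x\to\cA^{p-1,3}_x$ for $p\ge1$. Writing $U=(T^*_xM\otimes\CC)^{1,0}$ and $W=(T^*_xM\otimes\CC)^{0,1}$ (each of dimension $3$), we have $\cA^{*,*}_x=\Lambda^\bullet U\otimes\Lambda^\bullet W$ and $\mub$ is the derivation extending the isomorphism $\phi:=\mub|_{\cA^{1,0}_x}\colon U\to\Lambda^2 W$. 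Since conjugating by the automorphism of $\cA^{*,*}_x$ acting by $g\in\GL(U)$ on $U$ and by the identity on $W$ turns $\mub_\phi$ into $\mub_{\phi\circ g^{-1}}$, the cohomology dimensions do not depend on the choice of isomorphism $\phi$; so I may fix a basis $\bar a,\bar b,\bar c$ of $W$ and the unique basis $a,b,c$ of $U$ with $\mub a=\bar b\bar c$, $\mub b=\bar c\bar a$, $\mub c=\bar a\bar b$, and compute $H_\mub^{*,*}$ there.

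The computation I have in mind uses Serre duality $\Hh^{p,q}_{\mub,x}\cong\Hh^{m-p,m-q}_{\mub,x}$ (Proposition \ref{SerreDuality}) to reduce to $p\le1$. For $p=0$, $H^{0,q}_{\mub,x}=\Lambda^q W/\mub(\cA^{1,q-2}_x)$; as $\phi$ is onto $\Lambda^2W$ and $\Lambda^2W\wedge W=\Lambda^3W$, this equals $\Lambda^0W\oplus\Lambda^1W$, of dimensions $(1,3,0,0)$ for $q=0,1,2,3$. For $p=1$ I would compute directly from the Leibniz rule and from $\phi$ being an isomorphism: $\mub|_{\cA^{1,0}_x}=\phi$ is injective; $\mub\colon\cA^{1,1}_x\to\cA^{0,3}_x$ is onto with $8$-dimensional kernel; $\mub$ annihilates $\cA^{1,2}_x$ and $\cA^{1,3}_x$, while $\mub\colon\cA^{2,0}_x\to\cA^{1,2}_x$ is injective (rank $3$) and $\mub\colon\cA^{2,1}_x\to\cA^{1,3}_x$ is onto (rank $3$). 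This gives $\dim H^{1,q}_{\mub,x}=(0,8,6,0)$ for $q=0,1,2,3$; reflecting the $p\le1$ rows by Serre duality yields the remaining rows and reproduces exactly the stated table. As a sanity check, the alternating sum of the table entries is $0=\bigl(\sum_p(-1)^p\binom{3}{p}\bigr)^2$, the fibrewise Euler characteristic. Conversely, $(3)\Rightarrow(2)$: the entry $\dim\Hh^{1,0}_{\mub,x}=0$ forces $\phi$ to be injective, hence an isomorphism.

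Finally, for the locally constant rank claim: granting maximal non-integrability, part $(3)$ says $\dim\Hh^{p,q}_{\mub,x}$ is independent of $x$; since $\dim\cA^{p,q}_x$ is constant too, the fibrewise $\mub$-Hodge decomposition of Lemma \ref{mubHodge} gives
\[
\dim\cA^{p,q}_x=\operatorname{rank}\bigl(\mub|_{\cA^{p+1,q-2}_x}\bigr)+\dim\Hh^{p,q}_{\mub,x}+\operatorname{rank}\bigl(\mub|_{\cA^{p,q}_x}\bigr),
\]
and a downward induction on $p$ (exactly as in Lemma \ref{4maxnonint}) shows every $\operatorname{rank}\bigl(\mub|_{\cA^{p,q}_x}\bigr)$ is independent of $x$, so $\mub$ has locally constant rank. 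I expect the only place needing genuine care is the middle-bidegree bookkeeping in $(2)\Rightarrow(3)$ — in particular verifying that $\mub\colon\cA^{2,0}_x\to\cA^{1,2}_x$ is injective and $\mub\colon\cA^{2,1}_x\to\cA^{1,3}_x$ is surjective — but in the model basis these are short explicit checks, and everything else is pinned down by the two ends of the complex together with Serre duality.
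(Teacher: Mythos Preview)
Your proof is correct and follows essentially the same line as the paper's: both reduce $(2)\Rightarrow(3)$ to checking injectivity/surjectivity of a few fibrewise $\mub$-maps via explicit frame computations, then invoke Serre duality to fill in the rest of the table; and both deduce the locally constant rank statement from the constancy of the table entries via the fibrewise Hodge decomposition.

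The one organizational difference worth noting: you introduce a $\GL(U)$-conjugation argument to normalize $\phi=\mub|_{\cA^{1,0}_x}$ to the specific model $\mub a=\bar b\bar c$, $\mub b=\bar c\bar a$, $\mub c=\bar a\bar b$, and then do your explicit checks there. The paper instead works in an \emph{arbitrary} frame $(\xi_1,\xi_2,\xi_3)$ and shows directly that $\mub|_{2,0}$ and $\mub|_{3,0}$ are injective by observing that the terms in $\mub(\xi_i\wedge\xi_j)$ and $\mub(\xi_1\wedge\xi_2\wedge\xi_3)$ land in linearly independent basis vectors with not-all-zero coefficients. Your normalization makes the individual checks marginally cleaner; the paper's version avoids the extra conjugation step. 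Neither buys anything substantial over the other---the core content (which maps must be shown injective, and how Serre duality propagates the zeros) is identical. One small slip: your conjugation formula should read $\mub_{\phi\circ g}$ rather than $\mub_{\phi\circ g^{-1}}$, but this does not affect the argument.
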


\begin{proof}
In dimension $6$, maximally non-integrable is equivalent to $\mub: \cA^{1,0} \to \cA^{0,2} $ being a fiberwise isomorphism, since these fibers have the same dimension. So conditions $(1)$ and $(2)$ are equivalent, and clearly $(3)$ implies $(2)$, so it remains to show that $(2)$ implies $(3)$.

Suppose $\mub: \cA^{1,0} \to \cA^{0,2} $ is a fiberwise isomorphism, and let $\mub|_{p,q}$ denote $\mub$ restricted to
$\cA^{p,q}$. Since $\mub|_{1,0}$ is fiberwise surjective, $\Hh^{0,2}_\mub=0$ on every fiber, so by Serre duality,
$\Hh^{3,1}_\mub=0$ on every fiber. It remains to show that $\mub|_{2,0}$ and $\mub|_{3,0}$ are fiberwise injective.
In any choice of frame $(\xi_1 , \xi_2, \xi_3) $ of $ \cA^{1,0}_x$, with conjugate $(\bar \xi_1, \bar \xi_2, \bar \xi_3) \in \cA^{0,1}_x$, we may write
\[
\mub (\xi_i) = \sum_{j<k} \lambda^{jk}_i \, \bar \xi_j \wedge \bar \xi_k,
\]
so that for $i \neq j$
\[
 \mub (\xi_i \wedge \xi_j) =  \sum_{m<n} \lambda^{mn}_i \, \bar \xi_m \wedge \bar \xi_n \wedge \xi_j
 -
 \sum_{r<s} \lambda^{rs}_j \,  \xi_i \wedge  \bar \xi_r \wedge \bar \xi_s.
 \]
 For each fixed $i$ and $j$, the constants $\lambda^{mn}_i$ and $\lambda^{rs}_j$ are not all zero, and the vectors 
 $\bar \xi_m \wedge \bar \xi_n \wedge \xi_j$ and $\xi_i \wedge  \bar \xi_r \wedge \bar \xi_s$ are linearly independent, which shows $\mub^{2,0}$ is injective since the frame was arbitrary. A similar argument applied to 
 \[
 \mub ( \xi_1 \wedge \xi_2 \wedge \xi_3)=
 \sum_{r<s} \lambda^{rs}_1 \,  \bar \xi_r \wedge  \bar \xi_s \wedge \xi_2 \wedge  \xi_3
 -
 \sum_{r<s} \lambda^{rs}_2 \, \xi_1 \wedge \bar \xi_r \wedge  \bar \xi_s \wedge  \xi_3
 +
 \sum_{r<s} \lambda^{rs}_3 \,  \xi_1 \wedge \xi_2 \wedge  \bar \xi_r \wedge \bar \xi_s
  \]
  shows that $\mub^{3,0}$ is injective: the constants $\lambda^{rs}_i$ are not all zero, and the summands are linearly independent.
  
 The last claim follows from condition $(3)$, since the 
rank of $\mub$ restricted to $(p,q)$ can be computed from the fiberwise Hodge decomposition to be equal to $3$ in bidegrees $(1,0)$,$(2,0)$, $(2,1)$, and $(3,0)$, equal to $1$ in bidegrees $(1,1)$ and $(3,0)$, and zero otherwise.
 \end{proof}

\begin{thm} \label{46maxE2deg}
The Fr\"olicher spectral sequence for a maximally non-integrable almost complex structure on a $4$-manifold or a $6$-manifold degenerates at the $E_2$-term.
\end{thm}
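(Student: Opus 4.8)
The plan is to show that $H_{\Dol}^{p,q}(M)$ vanishes for $|p-q|\geq 2$, and that this narrow support forces every Fr\"olicher differential $\delta_r$ with $r\geq 2$ to vanish for degree reasons, so that the spectral sequence collapses at $E_2$. Throughout I would fix a compatible metric, in order to have the harmonic theory of Section \ref{SecHarm} available.

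First I would establish the vanishing of $H_{\Dol}$ away from the diagonal band. Since $J$ is maximally non-integrable, Lemma \ref{4maxnonint} (when $\dim M=4$), resp.\ Lemma \ref{6maxnonint} (when $\dim M=6$), shows that $\mub$ has locally constant rank and records the table of fibrewise dimensions of $H_{\mub,x}^{p,q}$; in both tables every entry with $|p-q|\geq 2$ is zero. By the global $\mub$-Hodge decomposition (Theorem \ref{mubHodgeGLOBAL}), $H_\mub^{p,q}(M)\cong\Hh_\mub^{p,q}$ is the space of smooth sections of a bundle whose fibre over $x$ is $H_{\mub,x}^{p,q}$, so $H_\mub^{p,q}(M)=0$ whenever $|p-q|\geq 2$. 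Consequently $H_{\Dol}^{p,q}(M)=H^q(H_\mub^{p,*}(M),\delb)=0$ for all such $(p,q)$.

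Next I would rule out the higher differentials. By Theorem \ref{E1HDol}, $E_1^{p,q}(M)\cong H_{\Dol}^{p,q}(M)$, and since every page is a subquotient of the previous one, $E_r^{p,q}(M)=0$ for all $r\geq 1$ whenever $|p-q|\geq 2$. The differential on the $r$-th page has bidegree $(r,1-r)$, that is, $\delta_r\colon E_r^{p,q}(M)\to E_r^{p+r,\,q-r+1}(M)$ (for $r=1$ this is the formula of Theorem \ref{E1HDol}). If $\delta_r$ were nonzero for some $r\geq 2$, then both its source and its target would be nonzero, forcing $|p-q|\leq 1$ and $|(p+r)-(q-r+1)|\leq 1$; but $(p+r)-(q-r+1)=(p-q)+2r-1\geq -1+2r-1\geq 2$ for $r\geq 2$, a contradiction. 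Hence $\delta_r=0$ for all $r\geq 2$, i.e.\ $E_2=E_\infty$, as claimed. (Note that $\delta_1$ need not vanish, consistent with the examples computed later.)

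I do not anticipate a genuine obstacle, since all the real content is packaged into Lemmas \ref{4maxnonint} and \ref{6maxnonint}, which are where Serre duality (Proposition \ref{SerreDuality}) and the low-dimensional frame computations do the work. The one place that deserves a moment's attention is the passage in the first step from pointwise vanishing of $H_{\mub,x}^{p,q}$ to global vanishing of $H_\mub^{p,q}(M)$: this genuinely uses local constancy of the rank of $\mub$ (so that $\mub$-closed global forms in those bidegrees are globally $\mub$-exact, or have zero kernel outright), which is precisely what Theorem \ref{mubHodgeGLOBAL} provides. One should also take care to use the Fr\"olicher bidegree convention consistently with Theorem \ref{E1HDol}; no other subtlety arises, and in particular the argument applies whether or not $M$ is compact.
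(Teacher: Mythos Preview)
Your proof is correct and follows essentially the same approach as the paper: both arguments use Lemmas \ref{4maxnonint} and \ref{6maxnonint} to conclude that $H_\mub^{p,q}$ (and hence every $E_r^{p,q}$) vanishes outside the band $|p-q|\leq 1$, and then observe that for $r\geq 2$ the differential $\delta_r$ shifts $p-q$ by $2r-1\geq 3$, so either its source or target is zero. Your write-up is slightly more careful than the paper's in making explicit the passage from fibrewise vanishing of $H_{\mub,x}^{p,q}$ to global vanishing of $H_\mub^{p,q}(M)$ via the locally constant rank condition and Theorem \ref{mubHodgeGLOBAL}, which the paper leaves implicit.
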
 

\begin{proof} By the previous two Lemmas, the $E_2$ page of the Fr\"olicher spectral sequence 
is given in dimensions $4$ and $6$ by the following diagrams, since  $H_\mub^{p,q} =0$ implies $E_2^{p,q} =0$:
\[
\arraycolsep=5pt\def\arraystretch{1.3}
 \begin{array}{|c|c|c|c|c|}
 \hline
  
   0  & &  \\\hline
    & &  \\\hline
\,\,\,  &  \,\,\, &  0 \\
\hline
\end{array}
\quad
\quad 
 \begin{array}{|c|c|c|c|c|}
 \hline
  0 & 0 & 0  &  \\\hline
 0 &    &  &  \\\hline
  &  & & 0 \\\hline
\,\,\, \,& 0 & 0 &  0 \\
\hline
\end{array}
\]
It follows that in dimension $4$ or $6$, for $k \geq 2$, the differential $d_k: E_k^{p,q} \to E_k^{p+k,q-k+1}$ vanishes since, for all $p,q$, either $E_k^{p,q} =0$ or $E_k^{p+k,q-k+1} =0$.
\end{proof}

\begin{rmk}
The above result is optimal, in the sense that there exist
maximally non-integrable compact
$4$- and $6$-manifolds whose left-invariant spectral sequence does not degenerate at $E_1$, as shown in Examples \ref{Liealg} and \ref{su2su2} respectively. In addition, by \cite{Stelzigandfriends} the non-left invariant spectral sequence never degenerates at $E_1$ in the case of 4- and 6-dimensional maximally non-integrable manifolds.
\end{rmk}

We can compute abstractly the Dolbeault cohomologies as follows.

\begin{ex}\label{HDolMNI4} The Dolbeault cohomology of a maximally non-integrable almost complex $4$-manifold is given by 
\[
H^{*,*}_{\Dol}\cong
\arraycolsep=6pt\def\arraystretch{2}
 \begin{array}{|c|c|c|c|c|}
 \hline
 0&{\cA^{1,2}/ \{\mub \alpha+\delb \beta\}}&\Cok(\delb)  \\\hline
 \Cok(\delb)& \{\omega; \delb \omega =\mub \eta \}/\{ \delb \beta; \mub \beta=0\} &\Ke(\delb)    \\\hline
 \Ke(\delb)&\Ke(\delb)\cap\Ke(\mub)&0 \\\hline
\end{array}.\]
Note that for all $(p,q)\neq (1,1)$, we have isomorphisms $H^{p,q}_{\Dol}(M)\cong \Hh_{\delb_\mub}^{p,q}$.
Indeed, the bottom and top rows follow from Lemma \ref{bottom_top} and in bidegree $(0,1)$ it follows from Lemma \ref{H01}.
In bidegree $(2,1)$,  $H_{\Dol}^{2,1}\cong \Ke(\delb)\cong \Hh_{\del_\mub}^{0,1}$ by Lemma \ref{4maxnonint}, making $\mub|_{2,0}$ injective and therefore $\mub^*|_{1,2}$ surjective.
\end{ex}

\begin{ex}\label{HDolMNI6}
Similarly, for a maximally non-integrable $6$-manifold we have:
\begin{equation*}
\resizebox{1\hsize}{!}{$
H^{*,*}_{\Dol}\cong
\arraycolsep=6pt\def\arraystretch{2}
 \begin{array}{|c|c|c|c|c|}
 \hline
0&0&0&\Cok(\delb)  \\\hline
0& \cA^{1,2}/\{\mub \alpha+\delb \beta; \mub \beta=0\}&  \cA^{2,2}/\{\mub \alpha+\delb \beta; \mub \beta=0\} &\Ke(\delb)    \\\hline
 \Cok(\delb) & \{\omega\in \Ke(\mub); \delb \omega=\mub \eta\}&\{\omega\in \Ke(\mub); \delb \omega=\mub \eta\}&0 \\\hline
 \Ke(\delb)&0&0&0 \\\hline
\end{array}$}
\end{equation*}
In this case, we have isomorphisms $H^{p,q}_{\Dol}(M)\cong \Hh_{\delb_\mub}^{p,q}$ for all $(p,q)\neq (1,2),(2,2)$, 
while in the remaining bidegrees we have $\Hh_{\delb_\mub}^{*,*}=\Ke(\delb_\mub^*)\hookrightarrow H_{\Dol}^{*,*}\cong \Hh_{\mub}^{*,*}/\Img(\delb_\mub)$.
\end{ex}

\subsection{Nearly K\"{a}hler $6$-manifolds}\label{NKSection}
An almost Hermitian manifold $\Mh$ with almost complex structure $J$ is said to be \textit{(strictly) nearly-K\"{a}hler} if $(\nabla_X J ) Y$ is skew-symmetric for all $X$ and $Y$ (and $(\nabla_X J ) Y$ non-zero for some $X,Y$). Equivalently, the covariant derivative $\nabla \omega$ of the fundamental $(1,1)$-form $\omega$ is 
totally skew-symmetric.

In \cite{Verbitsky}, Verbitsky developed a  Hodge theory for nearly K\"{a}hler manifolds by proving
a set of equations analogous to the K\"{a}hler identities.
From these identities, Verbitsky deduced that, in the compact case, a complex form is $d$-harmonic if and only if it
is harmonic with respect to each component of the differential, i.e.,
\[\Ke(\Delta_d)=\Ke(\Delta_\mub)\cap \Ke(\Delta_\delb)\cap\Ke(\Delta_\del)\cap\Ke(\Delta_\mu).\]
This gives a decomposition on complex $d$-harmonic forms
\[\Hh^n_d:=\Ke(\Delta_d)\cap\cA^n=\bigoplus \Hh_d^{p,q}\text{ where }\Hh^{p,q}_{d}:=\Ke(\Delta_d)\cap \cA^{p,q}.\]
Furthermore, he proved that these bigraded spaces are non-trivial only when 
$p,q\in\{1,2\}$ (see Theorem 6.2 of \cite{Verbitsky})
and that 
\[\Hh^{1,2}_{d}\cong \Hh^{2,1}_{d}\text{ and }
 \Hh^{1,1}_{d}\cong \Hh^{2,2}_{d}.\]
Lastly, note that the above bigrading induces a 
Hodge-type decomposition on the complex de Rham cohomology of every 
compact nearly K\"{a}hler 6-manifold.

In this section, we give a detailed study of the Fr\"olicher spectral sequence for a nearly K\"ahler $6$-manifold.
We first prove degeneration at the second page.

\begin{lem} \label{NK6maxnonint}
A nearly K\"ahler $6$-manifold is maximally non-integrable. 
\end{lem}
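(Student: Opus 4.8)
The plan is to translate the nearly K\"ahler condition into a statement about the first-order piece of the exterior derivative, and then read off the rank of $\mub$ on $(1,0)$-forms. First I would recall that on a nearly K\"ahler $6$-manifold the fundamental form $\omega$ satisfies $d\omega = 3\,\mathrm{Re}\,\Psi$ for a nowhere-vanishing $(3,0)$-form $\Psi$ (equivalently $\nabla\omega$ is totally skew and nonzero), and that $\Psi$ trivializes the canonical bundle $\Lambda^{3,0}$. The key pointwise observation is that $d\Psi$ has a nonzero $\Lambda^{2,2}$-component proportional to $\omega\wedge\omega$; since $\Psi$ generates $\Lambda^{3,0}$ as a line bundle and $\mu$ vanishes on $(3,0)$-forms (it would land in $\Lambda^{5,-2}=0$), the only components of $d\Psi$ are $\delb\Psi\in\Lambda^{3,1}$ and $\del\Psi\in\Lambda^{4,0}=0$, so actually $d\Psi=\delb\Psi$ together with the constraint that $d(d\omega)=0$ forces the $(2,2)$-part; one must be slightly careful with bidegrees here, and the cleanest route is to instead work directly with $\mub$ on a local unitary coframe.

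So, concretely, I would fix a point $x$ and a local unitary coframe $(\xi_1,\xi_2,\xi_3)$ for $\Lambda^{1,0}$ adapted to the nearly K\"ahler structure. Writing $\mub\xi_i = \sum_{j<k}\lambda_i^{jk}\,\bar\xi_j\wedge\bar\xi_k$, the nearly K\"ahler torsion is essentially the tensor $(\lambda_i^{jk})$, and the statement ``strictly'' nearly K\"ahler says this tensor is nowhere zero; the extra symmetry coming from total skew-symmetry of $\nabla\omega$ forces $(\lambda_i^{jk})$ to be, up to scale, the fully antisymmetric Levi-Civita symbol $\varepsilon_{ijk}$ (this is the standard normal form for the intrinsic torsion of a strict nearly K\"ahler $6$-manifold, with a single scalar parameter). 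In particular $\mub\xi_i$ is a nonzero multiple of $\bar\xi_j\wedge\bar\xi_k$ with $\{i,j,k\}=\{1,2,3\}$, so $\mub\colon\Lambda^{1,0}_x\to\Lambda^{0,2}_x$ sends a basis to a basis and hence is an isomorphism at every point. By the Lemma immediately preceding Lemma \ref{6maxnonint}, maximal non-integrability is equivalent to $\mub\colon\cA^{1,0}_x\to\cA^{0,2}_x$ having maximal rank for all $x$, which in dimension $6$ means being an isomorphism (same fiber dimension), so this finishes the proof.

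The main obstacle I anticipate is pinning down the normal form of the torsion tensor rigorously without importing too much machinery: I would need to justify that the nearly K\"ahler condition, phrased as $(\nabla_X J)Y$ skew-symmetric and nonzero, forces the matrix $(\lambda_i^{jk})$ to be (pointwise) a nonzero multiple of $\varepsilon_{ijk}$ rather than merely nonzero. One clean way around this is to avoid the full normal form and argue more softly: it suffices that $\mub|_{1,0}$ be injective, and injectivity already follows if for each nonzero $v=\sum a_i\xi_i$ we have $\mub v\neq 0$; the skew-symmetry of the torsion rules out ``degenerate directions'' because the contraction of $v$ with the $3$-form encoding $\nabla\omega$ is nonzero whenever $v\neq 0$ and the $3$-form is a nonzero element of $\Lambda^3$ of a $3$-dimensional space, hence (being top-degree-like after one contraction) nondegenerate. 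I would lean on Verbitsky's identities \cite{Verbitsky} and the classical description of the intrinsic torsion (Gray \cite{Gray}) to make this airtight, and then the equivalence with maximal non-integrability is immediate from the preceding lemma.
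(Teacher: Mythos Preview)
Your core argument (second paragraph) is correct and is precisely the paper's approach: the paper simply cites Verbitsky for the explicit local frame formula $\mub\xi_1=\lambda\,\bar\xi_2\wedge\bar\xi_3$, $\mub\xi_2=-\lambda\,\bar\xi_1\wedge\bar\xi_3$, $\mub\xi_3=\lambda\,\bar\xi_1\wedge\bar\xi_2$ with $\lambda\neq 0$, and then invokes Lemma~\ref{6maxnonint}. Your first paragraph's detour through $\Psi$ contains a bidegree slip---you omitted the component $\mub\Psi\in\cA^{2,2}$, which is exactly the $(2,2)$-piece of $d\Psi$ you were seeking---but since you abandon that route anyway it does no damage.
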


\begin{proof}
According to \cite{Verbitsky2}, or  \cite{Verbitsky}, there is a local frame $(\xi_1, \xi_2, \xi_3) \in \cA^{1,0}$, and conjugate $(\bar \xi_1, \bar \xi_2, \bar \xi_3) \in \cA^{0,1}$ such that
 \begin{equation} \label{eq;mubptwise}
 \mub (\xi_1) =  \lambda \, \bar \xi_ 2 \wedge \bar \xi_3 \quad 
 \mub (\xi_2) = - \lambda \, \bar \xi_ 1\wedge \bar \xi_3  \quad 
 \mub (\xi_3)  =  \lambda \, \bar \xi_ 1 \wedge \bar \xi_2
\end{equation} 
for some $\lambda \neq 0$.
Equation (\ref{eq;mubptwise}) shows 
$\mub_{(1,0)}: \cA^{1,0} \to \cA^{0,2}$ is a fiberwise isomorphism, so the claims follows from Lemma \ref{6maxnonint}.
\end{proof}

From the previous lemma, Theorem \ref{46maxE2deg}, and Corollary \ref{HdRpq}, it follows that
\begin{cor} \label{NK6E2}
The Fr\"olicher spectral sequence for a nearly K\"ahler $6$-manifold degenerates at the $E_2$-term,
and for all $p,q$
\[H_{\dR}^{p,q}:=E_2^{p,q}\cong \Hh_d^{p,q}.\]
In particular, for compact nearly K\"ahler $6$-manifolds, the Hodge-type decomposition induced in cohomology
agrees with the metric-independent bigrading induced by the almost complex structure.
\end{cor}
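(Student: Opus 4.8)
The plan is to obtain the degeneration essentially for free and then pin down the bigraded pieces of $E_2=E_\infty$ using Verbitsky's characterization of $d$-harmonic forms. By Lemma~\ref{NK6maxnonint} a nearly K\"ahler $6$-manifold is maximally non-integrable, so Theorem~\ref{46maxE2deg} gives degeneration at $E_2$, and Corollary~\ref{HdRpq} then identifies $E_2^{p,q}\cong E_\infty^{p,q}$ with the associated graded $F^pH^n/F^{p+1}H^n$ of the Hodge filtration on $H^n_{\dR}(M,\CC)$, where $n=p+q$. So everything reduces to identifying $F^pH^n/F^{p+1}H^n$ with $\Hh_d^{p,q}$.

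For this I would use the identity $\Ke(\Delta_d)=\Ke(\Delta_\mub)\cap\Ke(\Delta_\delb)\cap\Ke(\Delta_\del)\cap\Ke(\Delta_\mu)$ recalled above for compact nearly K\"ahler manifolds \cite{Verbitsky}. It gives $\Hh_d^n=\bigoplus_{p+q=n}\Hh_d^{p,q}$, and, since on a compact manifold each of $\mub^*,\delb^*,\del^*,\mu^*$ is the $\mathcal L_2$-adjoint of the corresponding operator (Lemma~\ref{mub*adjoint} for $\mub$, the classical fact for $\delb$, and their complex conjugates for $\mu$ and $\del$), every $\omega\in\Hh_d^{p,q}$ satisfies $\mub\omega=0$ together with $\mub^*\omega=\delb^*\omega=\del^*\omega=\mu^*\omega=0$. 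Being $d$-closed of bidegree $(p,q)$ with $\mub\omega=0$, such an $\omega$ lies in $F^p\cA^n$ (Definition~\ref{Hodgefildef}), hence represents a class in $F^pH^n$ and an element of $E_\infty^{p,q}$; this produces a natural map $\Hh_d^{p,q}\to E_\infty^{p,q}$.

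Injectivity of this map is the heart of the matter. Suppose $\omega\in\Hh_d^{p,q}$ maps to $0$, that is $\omega=\beta+d\gamma$ with $\beta\in F^{p+1}\cA^n$. Taking bidegree-$(p,q)$ components and using that $\beta$ has none, we get $\omega=\mub\gamma^{p+1,q-2}+\delb\gamma^{p,q-1}+\del\gamma^{p-1,q}+\mu\gamma^{p-2,q+1}$, where $\gamma^{a,b}$ denotes the $(a,b)$-component of $\gamma$; pairing with $\omega$ and moving each operator to its adjoint, all four terms vanish because $\mub^*\omega=\delb^*\omega=\del^*\omega=\mu^*\omega=0$, so $\|\omega\|^2=0$ and $\omega=0$. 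Finally, $\sum_{p+q=n}\dim\Hh_d^{p,q}=\dim\Hh_d^n=\dim H^n_{\dR}(M,\CC)=\sum_{p+q=n}\dim E_\infty^{p,q}$ (using the Hodge theorem for the elliptic operator $\Delta_d$ and the fact that $E_\infty$ is the associated graded of $F$), so the injections $\Hh_d^{p,q}\hookrightarrow E_\infty^{p,q}$ are forced to be isomorphisms. The same dimension matching shows $F^pH^n=\bigoplus_{i\geq p}\Img(\Hh_d^{i,n-i}\to H^n)$, which is exactly the ``in particular'' assertion that Verbitsky's harmonic bigrading of $H^*_{\dR}(M,\CC)$ coincides with the metric-independent bigrading coming from $E_\infty$. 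The only genuine obstacle is the injectivity step, and it is driven entirely by Verbitsky's characterization of $d$-harmonic forms; the remainder is bookkeeping with the filtration $F$ and a dimension count.
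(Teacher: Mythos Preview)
Your argument is correct and follows the paper's intended route: the paper's proof is literally the one-line citation ``From the previous lemma, Theorem~\ref{46maxE2deg}, and Corollary~\ref{HdRpq}, it follows that\ldots'', and you have supplied exactly the details this sentence suppresses. In particular, the identification $E_\infty^{p,q}\cong \Hh_d^{p,q}$ via the injectivity-plus-dimension-count argument (using Verbitsky's characterization $\Ke(\Delta_d)=\bigcap_\delta\Ke(\Delta_\delta)$ to kill each adjoint term in the pairing) is precisely what the paper leaves implicit; your write-up makes the corollary an honest proof rather than an assertion. One small remark: the degeneration claim holds without compactness, while your identification with $\Hh_d^{p,q}$ (and the paper's) tacitly assumes compactness so that Verbitsky's result and the $\mathcal L_2$-adjoint identifications apply---you use this, but it would be cleaner to flag it explicitly.
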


\begin{rmk}
It is an interesting geometric question whether a compact almost complex $6$-manifold admits a nearly K\"ahler metric. Verbitsky showed there is at most one such metric (up to a constant) for each almost complex structure, \cite{Verbitsky2}. 
Since the degeneration depends only on the almost complex structure, and not the metric, it follows that that if the Fr\"olicher spectral sequence for an almost complex $6$-manifold does not degenerate at $E_2$, then there is no metric for which the structure is nearly K\"ahler. 
\end{rmk}

In what follows, we give a careful study of the Dolbeault cohomology for nearly K\"{a}hler manifolds. To do so, we first review some known background on the algebra of operators on the differential forms of a  nearly  K\"ahler $6$-manifold, as well as establish some new algebraic identities which may be of interest in their own right. These will be used to facilitate the calculations on the $E_1$-page.

Let $[A,B]:=AB-(-1)^{|A||B|}BA$ denote the graded commutator of operators $A$ and $B$. 
The following commutation relations are verified:

\begin{lem}[\cite{Verbitsky}, Proposition 4.3]\label{commutators}
For a nearly K\"{a}hler $6$-manifold, we have:
\begin{align*}
[\mu^*,\delb ]=[\mub^*,\del ]=[\mu,\delb^* ]=[\mub,\del^* ]=0,\\
[\delb^*,\del]=-[\mu,\del^*]=-[\mub^*,\delb],\\
[\del^*,\delb]=-[\mub,\delb^*]=-[\mu^*,\del],\\
[\mu,\mub^*]= [\mub,\mu^*]=0.
\end{align*}
\end{lem}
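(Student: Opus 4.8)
The plan is to follow Verbitsky's strategy of local operator identities, first cutting the nine displayed relations down to a small independent subset and then establishing those.

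\emph{Reduction.} Each of $\delb,\del,\mu,\mub$ has odd total degree, so every bracket above is an anticommutator, and two symmetries act on the list. Taking formal adjoints --- legitimate because $\mu^*$ and $\mub^*$ are the fibrewise adjoints of $\mu,\mub$ by the argument of Lemma~\ref{mub*adjoint}, and $\delb^*,\del^*$ are the formal adjoints of $\delb,\del$ --- carries $[\mu^*,\delb]\leftrightarrow[\mu,\delb^*]$, $[\mub^*,\del]\leftrightarrow[\mub,\del^*]$, $[\mu,\mub^*]\leftrightarrow[\mub,\mu^*]$, $[\delb^*,\del]\leftrightarrow[\del^*,\delb]$, $[\mu,\del^*]\leftrightarrow[\mu^*,\del]$, and $[\mub^*,\delb]\leftrightarrow[\mub,\delb^*]$. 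Complex conjugation (which swaps $\delb\leftrightarrow\del$ and $\mu\leftrightarrow\mub$, and commutes with $*$) carries $[\mu^*,\delb]\leftrightarrow[\mub^*,\del]$, $[\mu,\mub^*]\leftrightarrow[\mub,\mu^*]$, and the chain $[\delb^*,\del]=-[\mu,\del^*]=-[\mub^*,\delb]$ to $[\del^*,\delb]=-[\mub,\delb^*]=-[\mu^*,\del]$. Hence it suffices to prove the four identities
\[
[\mu^*,\delb]=0,\quad [\mu,\mub^*]=0,\quad [\delb^*,\del]=-[\mu,\del^*],\quad [\mu,\del^*]=[\mub^*,\delb].
\]

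\emph{The engine.} A nearly K\"ahler $6$-manifold carries its canonical Hermitian (Gray) connection $\nabla$, with $\nabla g=0$, $\nabla J=0$ and totally skew-symmetric torsion; the defining feature of the nearly K\"ahler condition is that this torsion $3$-form --- equivalently the Nijenhuis tensor, hence the algebraic derivation $\mu+\mub$ --- is $\nabla$-parallel, and in real dimension $6$ it is a constant multiple $\lambda\,\mathrm{Re}(\Psi)$ of the real part of a unit $(3,0)$-form $\Psi$ (the infinitesimal shadow of this being the frame identity (\ref{eq;mubptwise})). Writing $d=\del^\nabla+\delb^\nabla+\tau$, where $\del^\nabla,\delb^\nabla$ are the $(1,0)$- and $(0,1)$-parts of the antisymmetrised connection (there are no $(\pm1,\mp2)$ parts, since $\nabla J=0$) and $\tau$ is the algebraic ``contract-and-wedge with the torsion'' operator, one identifies $\mu$ and $\mub$ with the $(2,-1)$- and $(-1,2)$-components of $\tau$, and $\del=\del^\nabla+\tau^{1,0}$, $\delb=\delb^\nabla+\tau^{0,1}$. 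Expanding the four reduced identities then splits each into (i) $[\del^\nabla,\tau]$-type terms, which vanish by $\nabla$-parallelism of $\tau$ together with $d^2=0$, and (ii) purely algebraic commutators among $e(\Psi)$, $e(\bar\Psi)$, their contraction adjoints, and the bidegree projectors --- $SU(3)$-equivariant relations on $\bigwedge^\bullet(\CC^3\oplus\overline{\CC^3})$ that one checks once and for all in the explicit unitary frame of (\ref{eq;mubptwise}), using that $\bigwedge^3$ of a three-dimensional space is a line.

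\emph{Main obstacle.} The tensorial operators $\mu,\mub,\mu^*,\mub^*$ are transparent; the delicate point is separating the connection and torsion contributions inside $\del^*$ and $\delb^*$. The adjoint of $\del=\del^\nabla+\tau^{1,0}$ is $(\del^\nabla)^*$ plus the adjoint of an algebraic operator, and in $[\delb^*,\del]$ the cross-terms pairing $(\delb^\nabla)^*$ with $\tau$ collapse only after using both $\nabla$-parallelism of $\tau$ and the special relations satisfied by $\Psi$ in complex dimension three. Once these are assembled, matching bidegrees in the expansions yields the four reduced identities and hence the whole list. The real-dimension-$6$ hypothesis is used essentially: it is what makes $\mub|_{\cA^{1,0}}$ an isomorphism (Lemma~\ref{6maxnonint}) and pins $\tau$ down to the single scalar $\lambda$.
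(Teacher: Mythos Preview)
The paper does not supply its own proof of this lemma: it is stated with a citation to Verbitsky's Proposition~4.3 and used as a black box. So there is no in-paper argument to compare against.

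Your reduction by adjunction and complex conjugation is correct and cuts the list down to the four identities you name. The strategy you outline --- writing $d$ via the canonical Hermitian (Gray) connection $\nabla$ with $\nabla$-parallel, totally skew torsion, identifying $\mu,\mub$ as the bidegree pieces of the algebraic torsion operator, and then separating each bracket into a $\nabla$-parallelism term and a purely algebraic $SU(3)$-equivariant identity on $\bigwedge^\bullet(\CC^3\oplus\overline{\CC^3})$ --- is exactly Verbitsky's method, and is the right route.

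That said, what you have written is an outline, not a proof. The content lives precisely in the two places you flag as ``check once and for all'' and ``main obstacle'': the explicit verification of the algebraic identities among $e(\Psi),e(\bar\Psi)$ and their adjoints in the frame~(\ref{eq;mubptwise}), and the cancellation of the cross-terms pairing $(\delb^\nabla)^*$ with $\tau$ in $[\delb^*,\del]$. Neither computation is carried out; both are asserted. If the intent is a self-contained argument rather than an annotated pointer to \cite{Verbitsky}, those verifications need to appear explicitly. If the intent is just to indicate why the cited result holds, then what you have is adequate as a remark but should not be labelled a proof.
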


Given an almost Hermitian manifold $\Mh$ we will denote by $\omega$ the associated fundamental $(1,1)$-form and by $L$ the operator given by $L(\eta):=\omega\wedge \eta$. Let $\Lambda$ be the adjoint to $L$. We have the following set of nearly K\"{a}hler identities:

\begin{lem}[\cite{Verbitsky}, Theorem 3.1 and Proposition 5.1]\label{identities}
For a nearly K\"{a}hler $6$-manifold, we have:
\begin{enumerate}
 \item $[L,\del^*]=i\delb$,\,\,\, $[\Lambda,\del]=i\delb^*$,\,\,\, $[L,\delb^*]=-i\del$\,\,\, and $[\Lambda,\delb]=-i\del^*$.
 
 \item $[L,\mu^*]=2i\mub$, $[\Lambda,\mu]=2i\mub^*$, $[L,\mub^*]=-2i\mu$ and $[\Lambda,\mub]=-2i\mu^*$.
 
\item If $\eta \in \cA^{p,q}$ then $(\del\delb+\delb\del) \eta =-i\lambda^2(p-q)L \eta $.
\end{enumerate}
\end{lem}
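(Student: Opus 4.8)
The plan is to treat (1) and (2) together as the nearly-K\"ahler version of the K\"ahler identities and then deduce (3) from (2). For (1) and (2) the natural tool is the canonical Hermitian connection $\bar\nabla$ of the nearly K\"ahler structure (the characteristic connection with totally skew torsion): it has holonomy in $SU(3)$, satisfies $\bar\nabla g=\bar\nabla J=\bar\nabla\omega=0$ (so $L$ and $\Lambda$ commute with $\bar\nabla$), and, in the strict case, $\bar\nabla T=0$ for its torsion $3$-form $T$, which is a nonzero constant multiple of $d\omega$. First I would record the tautological fact that $[d,L]$ is left exterior multiplication by $d\omega$ — this is just the Leibniz rule — together with its metric adjoint. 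The nearly K\"ahler condition says exactly that $d\omega$ is totally skew of type $(3,0)+(0,3)$, i.e.\ $\del\omega=\delb\omega=0$ while $\mu\omega$ and $\mub\omega=\overline{\mu\omega}$ are nonzero; decomposing $[d,L]$ into bidegree-homogeneous pieces then yields $[\del,L]=[\delb,L]=0$ and identifies $[\mu,L]$, $[\mub,L]$ with exterior multiplication by $\mu\omega$, $\mub\omega$. This fixes the shape of the answer, but the mixed relations such as $[L,\del^*]=i\delb$ require the connection: I would write $d=\mathrm{alt}(\bar\nabla)+(\mu+\mub)$, where $\mathrm{alt}(\bar\nabla)=\del+\delb$ is the antisymmetrization of $\bar\nabla$ (first order, of bidegrees $(1,0)$ and $(0,1)$ only) and $\mu+\mub=-\tfrac14(N_J\otimes\mathrm{Id}_{\CC})^*$ is algebraic, hence expressible through the parallel torsion $T$ (Lemma~\ref{lem;Nmub}). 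Each commutator of $L$ or $\Lambda$ with $\del,\delb,\mu,\mub$ (or an adjoint) then splits as a ``connection part'' — handled exactly as in the K\"ahler case, the symbol/osculation argument being insensitive to torsion and $L,\Lambda$ passing through $\bar\nabla$ — plus an ``algebraic part'', which is a pointwise $SU(3)$-equivariant computation with $T$ (using the adapted frame of Lemma~\ref{NK6maxnonint} / equation~(\ref{eq;mubptwise}) to pin down the constants). The $(3,0)+(0,3)$ type of $T$ is exactly what kills the torsion corrections in the $[L,\del^*],[\Lambda,\del],[L,\delb^*],[\Lambda,\delb]$ relations — so (1) reads formally like the K\"ahler identities — while producing the factor $2i$ and the operators $\mu,\mub,\mu^*,\mub^*$ in (2). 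This recovers Verbitsky's Theorem~3.1.

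For (3): by the fourth equation of~(\ref{eq;drelations}) we have $\del\delb+\delb\del=-(\mu\mub+\mub\mu)$, so it suffices to show $\mu\mub+\mub\mu=i\lambda^2(q-p)L$ on $\cA^{p,q}$. Both sides are $C^\infty(M)$-linear in the argument ($\mu,\mub$ are tensorial, as noted in Section~\ref{SecPreliminars}), so this is a fiberwise identity of bundle endomorphisms; I would verify it in the adapted unitary coframe $(\xi_1,\xi_2,\xi_3)$ of equation~(\ref{eq;mubptwise}), where $\mub$ and $\mu=\overline{\mub}$ are explicit on generators, by evaluating $\mu\mub+\mub\mu$ on a monomial basis of $\cA^{p,q}_x$ — only finitely many bidegrees on a $6$-manifold — which returns the stated scalar. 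A formal alternative is to substitute $\mub=\tfrac{1}{2i}[L,\mu^*]$ and $\mu=\tfrac{1}{2i}[L,\mub^*]$ from (2) and expand $\mu\mub+\mub\mu$ using the graded Jacobi identity, Lemma~\ref{commutators}, and the Lefschetz relation $[\Lambda,L]\eta=(p+q-3)\eta$ on $\cA^{p,q}$. This recovers Verbitsky's Proposition~5.1.

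The main obstacle is the torsion bookkeeping for (1) and (2): correctly matching the several Clifford-type contractions of the parallel torsion $3$-form with $\mu,\mub,\mu^*,\mub^*$ and tracking all the factors of $i$ and $2$, so that exactly the asserted correction terms survive and no spurious ones appear. The ``free'' relations $[\del,L]=[\delb,L]=0$, and $[\mu,L]$, $[\mub,L]$ being exterior multiplication by $\mu\omega$, $\mub\omega$, come from the Leibniz rule alone, but the mixed identities genuinely need the $SU(3)$-connection. For (3) the difficulty is only sign-tracking, and the explicit frame computation is a reliable fallback.
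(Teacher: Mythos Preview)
The paper does not supply a proof for this lemma: it is quoted directly from Verbitsky (Theorem~3.1 and Proposition~5.1 of \cite{Verbitsky}), as the attribution in the lemma header indicates, and the text moves immediately to the next result. There is therefore no in-paper argument to compare your proposal against.

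That said, your sketch is a reasonable reconstruction of how such identities are established. The use of the characteristic $SU(3)$-connection with parallel skew torsion, the observation that $d\omega$ has pure type $(3,0)+(0,3)$ so that $[\del,L]=[\delb,L]=0$ while $[\mu,L]$ and $[\mub,L]$ carry the torsion, and the reduction of (3) to a fiberwise identity via $\del\delb+\delb\del=-(\mu\mub+\mub\mu)$ checked in the adapted coframe of~(\ref{eq;mubptwise}) are all sound strategic moves and broadly in line with Verbitsky's own approach. Your caveat about the torsion bookkeeping for the mixed relations is the genuine content; the rest is routine. But for the purposes of this paper the lemma functions purely as a black-box citation, and no proof is expected or given.
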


We now deduce the following:
\begin{lem}\label{equalDeltas}
For any compact nearly K\"{a}hler $6$-manifold, we have identities
\[
\Delta_\delb+2\Delta_\mu=\Delta_\del+2\Delta_\mub.
\]
In particular, for all $p,q$ we have
\[
\Hh_\delb^{p,q}\cap \Hh_\mu^{p,q}=\Hh_\del^{p,q}\cap\Hh_\mub^{p,q}.
\]
Moreover, restricted to bidegrees $p=q$, or $p+q=3$, we have 
\[\Delta_\mub=\Delta_\mu\text{ and } \, \Delta_\delb=\Delta_\del.\]
\end{lem}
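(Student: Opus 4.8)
The plan is to deduce the operator identity of the statement from Verbitsky's nearly K\"ahler identities (Lemma~\ref{identities}) together with the fourth relation $\mu\mub+\del\delb+\delb\del+\mub\mu=0$ of~(\ref{eq;drelations}), and then to read off the two corollaries by elementary arguments valid on a compact manifold.

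The key computation goes as follows. Using Lemma~\ref{identities}(1) one writes $\del^*$ as a constant multiple of $[\Lambda,\delb]$ and $\delb^*$ as a constant multiple of $[\Lambda,\del]$, and using Lemma~\ref{identities}(2) one writes $\mu^*$ and $\mub^*$ as constant multiples of $[\Lambda,\mub]$ and $[\Lambda,\mu]$, where the constants appearing in the $\mu$--$\mub$ identities are twice those in the $\del$--$\delb$ identities. Since $\Lambda$ has even degree, $X\mapsto[\Lambda,X]$ is a derivation of the algebra of operators, so
\[
[\Lambda,\del\delb+\delb\del]=\big[[\Lambda,\del],\delb\big]+\big[\del,[\Lambda,\delb]\big],
\]
and substituting the Hodge identities turns the right-hand side into a constant multiple of $\Delta_\delb-\Delta_\del$; the identical manipulation expresses $[\Lambda,\mub\mu+\mu\mub]$ as a constant multiple of $\Delta_\mub-\Delta_\mu$, carrying an extra factor $2$. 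The relation from~(\ref{eq;drelations}) says $\del\delb+\delb\del=-(\mub\mu+\mu\mub)$, so applying $[\Lambda,-]$ and comparing the two expressions pins $\Delta_\delb-\Delta_\del$ and $\Delta_\mub-\Delta_\mu$ to be proportional with ratio $2$, which after the sign bookkeeping is precisely $\Delta_\delb+2\Delta_\mu=\Delta_\del+2\Delta_\mub$.

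To obtain the equality of harmonic spaces, note that on a compact manifold $\delb^*,\del^*$ are the $\mathcal{L}_2$-adjoints of $\delb,\del$, and $\mu^*,\mub^*$ are the $\mathcal{L}_2$-adjoints of $\mu,\mub$ as well, the latter because $\mu,\mub$ are linear over functions so their $\star$-adjoints agree with the fibrewise metric adjoints, exactly as in Lemma~\ref{mub*adjoint}. Hence for $\alpha\in\cA^{p,q}$,
\[
\big\langle(\Delta_\delb+2\Delta_\mu)\alpha,\alpha\big\rangle=\|\delb\alpha\|^2+\|\delb^*\alpha\|^2+2\|\mu\alpha\|^2+2\|\mu^*\alpha\|^2,
\]
which vanishes exactly when $\alpha\in\Hh_\delb^{p,q}\cap\Hh_\mu^{p,q}$, and likewise $\langle(\Delta_\del+2\Delta_\mub)\alpha,\alpha\rangle=0$ exactly when $\alpha\in\Hh_\del^{p,q}\cap\Hh_\mub^{p,q}$; since the two operators coincide, so do the two subspaces. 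For the remaining claim I would use Lemma~\ref{identities}(3): on $\cA^{p,q}$ one has $\del\delb+\delb\del=-i\lambda^2(p-q)L$ with the same scalar on $\cA^{p-1,q-1}$, so on $\cA^{p,q}$ one gets $[\Lambda,\del\delb+\delb\del]=-i\lambda^2(p-q)[\Lambda,L]$; the Lefschetz $\mathfrak{sl}_2$-relation shows $[\Lambda,L]$ acts on $\cA^{p,q}$ by a scalar proportional to $m-p-q$, which on a $6$-manifold vanishes precisely when $p+q=3$, while the prefactor $p-q$ vanishes when $p=q$. In either case the commutator is zero there, so the computation above collapses to $\Delta_\delb=\Delta_\del$ and $\Delta_\mub=\Delta_\mu$ on those bidegrees.

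The main obstacle is the sign-and-coefficient bookkeeping in the second paragraph: $\Delta_\delb+2\Delta_\mu$ and $\Delta_\delb+2\Delta_\mub$ differ only by the sign of one term, so one must track the factor $2$ coming from Lemma~\ref{identities}(2) and the minus sign coming from~(\ref{eq;drelations}) carefully enough to see that it is $2\Delta_\mu$, and not $2\Delta_\mub$, that is paired with $\Delta_\delb$. A secondary point, needed for the harmonic-space equality, is to justify that $\mu^*$ genuinely equals the $\mathcal{L}_2$-adjoint of $\mu$ even though $\Delta_\mu$ is not elliptic; this is handled by the fibrewise adjointness argument as indicated.
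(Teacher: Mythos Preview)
Your proposal is correct and follows essentially the same route as the paper: both arguments express $\Delta_\mu-\Delta_\mub$ as $\tfrac{i}{2}[\Lambda,\mu\mub+\mub\mu]$ via the identities of Lemma~\ref{identities}(2), then use the $d^2=0$ relation $\mu\mub+\mub\mu=-(\del\delb+\delb\del)$ to trade this for a commutator involving $\del,\delb$. The only substantive difference is how the $\del$--$\delb$ side is handled. The paper evaluates $[\Lambda,\del\delb+\delb\del]$ as an explicit scalar using Lemma~\ref{identities}(3) together with the Lefschetz relation $[\Lambda,L]=(p+q-3)\mathrm{Id}$, and then quotes Verbitsky's formula $(\Delta_\del-\Delta_\delb)\eta=\lambda^2(3-p-q)(p-q)\eta$ as a black box in order to compare. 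You instead apply the derivation property $[\Lambda,\del\delb+\delb\del]=[[\Lambda,\del],\delb]+[\del,[\Lambda,\delb]]$ and substitute the identities of Lemma~\ref{identities}(1) directly, which already yields $i(\Delta_\delb-\Delta_\del)$ without any appeal to Lemma~\ref{identities}(3) or to Verbitsky's Corollary~3.3. This is a genuine (if modest) simplification: the main operator identity follows purely from parts~(1) and~(2) of Lemma~\ref{identities} and the single $d^2=0$ relation, with Lemma~\ref{identities}(3) needed only for the final claim about the bidegrees $p=q$ and $p+q=3$, exactly as you indicate. Your treatment of the harmonic-space equality and of the $\mathcal{L}_2$-adjointness of $\mu^*$ is also correct and matches the ambient framework of the paper. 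Your warning about the sign bookkeeping is well placed; this is indeed the only delicate point.
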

\begin{proof}
Using (2) of Lemma \ref{identities} we may write $\Delta_\mu$ as
\begin{align*}
\Delta_\mu=\mu\mu^*+\mu^*\mu={i\over 2}\left(\mu[\Lambda,\mub]+[\Lambda,\mub]\mu\right)
={i\over 2}\left(\mu \Lambda\mub -\mu\mub \Lambda+\Lambda\mub \mu -\mub \Lambda \mu\right).
\end{align*}
Likewise, for $\Delta_\mub$ we have:
\begin{align*}
\Delta_\mub=\mub\mub^*+\mub^*\mub=-{i\over 2}\left(\mub[\Lambda,\mu]+[\Lambda,\mu]\mub\right)
=-{i\over 2}\left(\mub \Lambda\mu -\mub\mu \Lambda+\Lambda\mu \mub -\mu \Lambda \mub\right)
\end{align*}
All together this gives 
\begin{align*}
 \Delta_\mu-\Delta_\mub={i\over 2}\left(\Lambda(\mu\mub+\mub\mu) - (\mu\mub+\mub\mu)\Lambda\right)
 =-{i\over 2}\left(\Lambda(\del\delb+\delb\del) - (\del\delb+\delb\del)\Lambda\right).
\end{align*}
Let $\eta \in \cA^{p,q}$.
Then by (3) of Lemma \ref{identities} we have 
\[(\Delta_\mu-\Delta_\mub)\eta=-{\lambda^2\over 2}(p-q)[\Lambda,L]\eta.\]
It now suffices to use the fact that 
$[\Lambda,L]\eta=(p+q-3)\eta$ (see for example \cite{GrHa}), to obtain
\[(\Delta_\mu-\Delta_\mub)\eta={\lambda^2\over 2}(3-p-q)(p-q)\eta.\]
In Corollary 3.3 in \cite{Verbitsky}, Verbitsky shows that 
\[(\Delta_\del-\Delta_\delb)\eta=\lambda^2(3-p-q)(p-q)\eta.\]
Combining these equations, we obtain the Lemma.
\end{proof}

By the previous Lemma and the conjugation isomorphism we immediately have:

\begin{cor} \label{p+q=3sym}
For a nearly K\"{a}hler $6$-manifold, and all $p+q=3$, we have
\[
\dim \Hh_\delb^{p,q} =\dim \Hh_\delb^{q,p}
\text{ and }
\dim \left( \Hh_{\delb}^{p,q}\cap \Hh_\mub^{p,q} \right)
= 
\dim \left(\Hh_{\delb}^{q,p}\cap \Hh_\mub^{q,p} \right).
\]
\end{cor}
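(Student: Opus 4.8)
The plan is to obtain the statement directly from Lemma \ref{equalDeltas} together with the complex-conjugation symmetry of the complex de Rham algebra, with no additional computation. Recall from Section \ref{SecPreliminars} that $\delb$ and $\del$ are complex conjugate operators, as are $\mub$ and $\mu$, and that the Hodge star $\star$ is $\CC$-linear; hence complex conjugation also intertwines the formal adjoints, $\overline{\delb^{*}\eta}=\del^{*}\bar\eta$ and $\overline{\mub^{*}\eta}=\mu^{*}\bar\eta$. Consequently complex conjugation, viewed as a real-linear isomorphism $\cA^{p,q}\to\cA^{q,p}$, intertwines $\Delta_\delb$ with $\Delta_\del$ and $\Delta_\mub$ with $\Delta_\mu$, and therefore restricts to isomorphisms
\[
\Hh_\delb^{p,q}\cong\Hh_\del^{q,p},\qquad \Hh_\mub^{p,q}\cong\Hh_\mu^{q,p},\qquad \Hh_\delb^{p,q}\cap\Hh_\mub^{p,q}\cong\Hh_\del^{q,p}\cap\Hh_\mu^{q,p}.
\]
In particular each of these pairs of spaces has the same dimension.

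Next I would invoke Lemma \ref{equalDeltas}. Since $p+q=3$ forces $q+p=3$, that lemma gives $\Delta_\del=\Delta_\delb$ and $\Delta_\mu=\Delta_\mub$ as operators on $\cA^{q,p}$, so $\Hh_\del^{q,p}=\Hh_\delb^{q,p}$ and $\Hh_\mu^{q,p}=\Hh_\mub^{q,p}$, and hence $\Hh_\del^{q,p}\cap\Hh_\mu^{q,p}=\Hh_\delb^{q,p}\cap\Hh_\mub^{q,p}$. Chaining these equalities with the conjugation isomorphisms above yields $\dim\Hh_\delb^{p,q}=\dim\Hh_\delb^{q,p}$ and $\dim(\Hh_\delb^{p,q}\cap\Hh_\mub^{p,q})=\dim(\Hh_\delb^{q,p}\cap\Hh_\mub^{q,p})$, which is exactly the assertion.

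There is essentially no obstacle here, since both inputs are already established; the only point worth a line of verification is that conjugation commutes with the adjoints $\delta^{*}=-\star\bar\delta\star$, which is immediate from the $\CC$-linearity of $\star$ and the fact that conjugation is an involution exchanging the conjugate pairs $(\delb,\del)$ and $(\mub,\mu)$. One could equivalently phrase the argument using the operator $\bar\star$ from Proposition \ref{SerreDuality}, but the conjugation route is the shortest.
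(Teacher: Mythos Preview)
Your proof is correct and follows exactly the same route as the paper, which simply states ``By the previous Lemma and the conjugation isomorphism we immediately have'' and gives no further details. You have merely spelled out what the paper leaves implicit: conjugation exchanges $\Hh_\delb^{p,q}$ with $\Hh_\del^{q,p}$ (and $\Hh_\mub^{p,q}$ with $\Hh_\mu^{q,p}$), and Lemma \ref{equalDeltas} identifies $\Delta_\del=\Delta_\delb$ and $\Delta_\mu=\Delta_\mub$ on bidegrees with $p+q=3$.
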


By Lemma \ref{NK6maxnonint} and the definition of maximally non-integrable, $\Hh^{p,q}_\mub=0$ for 
$\binom 3 p \binom 3 q \leq \binom{3}{p-1} \binom{3}{q+2}$. The Serre-dual groups vanish as well. Then Lemma \ref{equalDeltas}   implies that $\Hh_\mu^{p,q}=\Hh_\mub^{p,q}$ for all pairs $(p,q)$ such that 
$p+q\neq 1$ and $p+q\neq 5$. 
So, $\mub$-harmonic forms and $\mu$-harmonic forms of type $(p,q)$ 
coincide in the range depicted in grey in the table below:
\[
\arraycolsep=6pt\def\arraystretch{1.4}
 \begin{array}{|c|c|c|c|c|}
 \hline
 \greycell 0 & \greycell 0 &   &  \greycell \\
\hline
 0 \greycell &   \greycell  & \greycell  &  \\
\hline
  &\greycell  & \greycell & \greycell 0  \\
\hline
 \greycell  & &\greycell  0 & \greycell 0 \\
\hline
\end{array}
\]

An easy computation shows that for the remaining cases we have 
\begin{align*}
 \Hh_\mu^{0,1} &=0 & \Hh_\mu^{1,0}  &=\cA^{1,0} & \Hh_\mu^{3,2} &=0 &\Hh_\mu^{2,3}& =\cA^{2,3} \\
 \Hh_\mub^{0,1} &=\cA^{0,1} & \Hh_\mub^{1,0} &=0 & \Hh_\mub^{3,2}  &=\cA^{3,2}& \Hh_\mub^{2,3} &=0.
\end{align*}

Lemma \ref{6maxnonint} guarantees that, for nearly K\"ahler $6$-manifolds,  $\mub$ has locally constant rank. Therefore, we may consider the spaces $\Hh_{\delb_\mub}^{p,q}$ defined in subsection \ref{LCR}. We use the established harmonic theory to compute these.

\begin{lem}\label{HDol_delb_mub_kah}
Let $M$ be a compact nearly K\"{a}hler $6$-manifold. For the $(p,q)$-range depicted in grey in the table below we have
\[\Hh^{p,q}_{d}=\Hh_\delb^{p,q}\cap\Hh_\mub^{p,q}=\Hh_{\delb_\mub}^{p,q}.\]
\[
\arraycolsep=6pt\def\arraystretch{1.4}
 \begin{array}{|c|c|c|c|c|}
 \hline
 \greycell 0 & \greycell 0 & \greycell  0 &  \greycell \\
\hline
 0 \greycell &   \greycell  &   & \\
\hline
  &  & \greycell & \greycell 0  \\
\hline
 \greycell  & \greycell0 &\greycell 0 & \greycell 0 \\
\hline
\end{array}
\]

\end{lem}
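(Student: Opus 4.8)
The plan is to prove the two asserted equalities separately, after first cutting the grey range down to four bidegrees. By Lemma~\ref{6maxnonint}, $\Hh_\mub^{p,q}=0$ for the grey bidegrees $(p,q)\in\{(0,3),(1,3),(2,3),(0,2),(3,1),(1,0),(2,0),(3,0)\}$; since $\Hh_{\delb_\mub}^{p,q}\subseteq\Hh_\mub^{p,q}$ by definition, $\Hh_\delb^{p,q}\cap\Hh_\mub^{p,q}\subseteq\Hh_\mub^{p,q}$ trivially, and $\Hh_d^{p,q}\subseteq\Hh_\mub^{p,q}$ by Verbitsky's decomposition $\Ke(\Delta_d)=\Ke(\Delta_\mub)\cap\Ke(\Delta_\delb)\cap\Ke(\Delta_\del)\cap\Ke(\Delta_\mu)$, all three spaces vanish there and there is nothing to prove. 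This leaves $(p,q)\in\{(0,0),(1,2),(2,1),(3,3)\}$, in each of which $p=q$ or $p+q=3$. For these I would deduce the first equality $\Hh_d^{p,q}=\Hh_\delb^{p,q}\cap\Hh_\mub^{p,q}$ as follows: $\subseteq$ is again Verbitsky's decomposition, while for $\supseteq$, since $p=q$ or $p+q=3$, Lemma~\ref{equalDeltas} gives $\Delta_\mub=\Delta_\mu$ and $\Delta_\delb=\Delta_\del$ in these bidegrees, hence $\Hh_\mub^{p,q}=\Hh_\mu^{p,q}$ and $\Hh_\delb^{p,q}=\Hh_\del^{p,q}$, so $\Hh_\delb^{p,q}\cap\Hh_\mub^{p,q}=\Hh_\mub^{p,q}\cap\Hh_\mu^{p,q}\cap\Hh_\delb^{p,q}\cap\Hh_\del^{p,q}=\Hh_d^{p,q}$.

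For the second equality $\Hh_\delb^{p,q}\cap\Hh_\mub^{p,q}=\Hh_{\delb_\mub}^{p,q}$, the inclusion $\subseteq$ is the Harmonic Inclusion Theorem~\ref{HITCR}; since $\Hh_{\delb_\mub}^{p,q}\subseteq\Hh_\mub^{p,q}$ always, the reverse inclusion amounts to showing that a $\delb_\mub$-harmonic form of the relevant bidegree is $\delb$-harmonic. In bidegrees $(0,0)$ and $(3,3)$ this is elementary and I would dispose of it first: there $\mub$ and $\mub^*$ vanish on $\cA^{0,0},\cA^{0,1}$ and on $\cA^{3,3},\cA^{3,2}$ for degree reasons, so $\Hh_\mub$ is the identity on all forms in sight, whence $\delb_\mub$ and $\delb_\mub^*$ coincide with $\delb$ and $\delb^*$ (one of which vanishes by degree). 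Thus $\Hh_{\delb_\mub}^{0,0}=\Ke(\delb)\cap\cA^{0,0}=\Hh_\delb^{0,0}$ and $\Hh_{\delb_\mub}^{3,3}=\Ke(\delb^*)\cap\cA^{3,3}=\Hh_\delb^{3,3}$, and since $\Hh_\mub^{0,0}=\cA^{0,0}$, $\Hh_\mub^{3,3}=\cA^{3,3}$, both equal $\Hh_\delb^{p,q}\cap\Hh_\mub^{p,q}$.

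The remaining case --- bidegrees $(1,2)$ and $(2,1)$ --- is the heart of the matter, and I expect it to be the main obstacle. These bidegrees are exchanged by $\bar\star$, so by the Serre dualities already in hand (Propositions~\ref{SerreDuality} and \ref{HhdelbmubSerre}) it is enough to treat $(2,1)$. Since $\Hh_\mub^{2,0}=0$, the operator $\delb_\mub^*$ vanishes on $\cA^{2,1}$, so by Theorem~\ref{HDolidentifications} one has $\Hh_{\delb_\mub}^{2,1}=\Ke(\delb_\mub)\cap\Hh_\mub^{2,1}=\{\omega\in\Hh_\mub^{2,1}:\delb\omega\in\Img(\mub)\}\cong H_{\Dol}^{2,1}$; combined with $\Hh_d^{2,1}\cong H_{\dR}^{2,1}$ from Corollary~\ref{NK6E2}, the identity $\Hh_{\delb_\mub}^{2,1}=\Hh_d^{2,1}$ is seen to be exactly the assertion $h_{\Dol}^{2,1}=h_{\dR}^{2,1}$, equivalently the vanishing of the Fr\"olicher differential $\delta_1\colon E_1^{1,1}\to E_1^{2,1}$. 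To prove that a form $\omega$ as above has $\delb\omega=0$ and $\delb^*\omega=0$, I would feed Verbitsky's nearly K\"ahler commutation relations from Lemmas~\ref{commutators} and \ref{identities} --- in particular $[\mub,\delb^*]=-[\del^*,\delb]$, $[\mub^*,\delb]=-[\delb^*,\del]$ and $[\mub^*,\del]=0=[\mub,\del^*]$ --- into the pairings $\langle\delb\omega,\delb\omega\rangle$ and $\langle\delb^*\omega,\delb^*\omega\rangle$, using $\mub\omega=\mub^*\omega=\mu^*\omega=0$ (the last because $\Hh_\mub^{2,1}=\Hh_\mu^{2,1}$) and the bidegree vanishings $\Hh_\mub^{3,1}=\Hh_\mu^{0,3}=0$ around $(2,1)$. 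The difficulty is that these $L^2$-identities close up circularly when only the rank profile of $\mub$ is used; breaking the circle will require the explicit pointwise normal form for $\mub$ on a nearly K\"ahler $6$-manifold from Lemma~\ref{NK6maxnonint} (Equation~\eqref{eq;mubptwise}), and this is where the bulk of the computation --- precisely the content of $h_{\Dol}^{2,1}=h_{\dR}^{2,1}$ --- will reside.
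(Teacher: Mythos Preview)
Your reduction to the four bidegrees $(0,0),(1,2),(2,1),(3,3)$ and your treatment of the first equality and of the corners $(0,0),(3,3)$ are exactly as in the paper. Your identification $\Hh_{\delb_\mub}^{2,1}=\Ke(\delb_\mub)\cap\Hh_\mub^{2,1}\cong H_{\Dol}^{2,1}$ is also correct and is precisely what makes this lemma the source of the equality $h_{\Dol}^{2,1}=h_{\dR}^{2,1}$.

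The difference is at the core step. You are right that the naive $L^2$-pairing computation of $\|\delb\omega\|^2$ and $\|\delb^*\omega\|^2$ cycles: for $\omega\in\Hh_{\delb_\mub}^{2,1}$ one finds e.g.\ $\mub^*\delb\omega=-\delb^*\del\omega-\del\delb^*\omega$, which throws you onto $\del\omega$ and $\delb^*\omega$ and back again. But the resolution does \emph{not} require the pointwise normal form~\eqref{eq;mubptwise}; the commutation relations of Lemma~\ref{commutators} and the $\mub$/$\mu$-Hodge decompositions already suffice. The paper (working in the Serre-dual bidegree $(1,2)$) breaks the circle by inserting an intermediate third quantity, $\del^*\omega$, and arguing in three steps:
\begin{enumerate}
\item $\delb^*\omega=0$: decompose $\delb^*\omega\in\cA^{1,1}$ via the \emph{$\mu$-Hodge} decomposition; the non-harmonic pieces vanish because $[\mu,\delb^*]=0$ and $\mu\omega=0$ (since $\Hh_\mub^{1,2}=\Hh_\mu^{1,2}$), so $\delb^*\omega=\Hh_\mu(\delb^*\omega)$. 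Now the key point you are missing is that in bidegree $(1,1)$ one has $\Hh_\mu^{1,1}=\Hh_\mub^{1,1}$, hence $\Hh_\mu(\delb^*\omega)=\Hh_\mub(\delb^*\omega)=\delb_\mub^*\omega=0$.
\item $\del^*\omega=0$: decompose $\del^*\omega\in\cA^{0,2}$ via $\mub$-Hodge; the adjoint of the $d^2=0$ relation $\delb^2+\mub\del+\del\mub=0$ gives $\mub^*\del^*\omega=-\del^*\mub^*\omega-(\delb^*)^2\omega=0$ (using Step~1 and $\mub^*\omega=0$), and $\Hh_\mub^{0,2}=0$ forces $\del^*\omega=0$.
\item $\delb\omega=0$: now the commutator identity $\{\mub^*,\delb\}=\{\mu,\del^*\}$ yields $\mub^*\delb\omega=\mu\del^*\omega+\del^*\mu\omega-\delb\mub^*\omega=0$, so $\delb\omega\in\Hh_\mub^{1,3}=0$.
\end{enumerate}
The ingredient that unlocks everything is passing through the $\mu$-decomposition in Step~1 and using $\Hh_\mu^{1,1}=\Hh_\mub^{1,1}$ to identify the two harmonic projections; once $\delb^*\omega=0$ is in hand, the auxiliary vanishing $\del^*\omega=0$ (from the $(\delb^*)^2$ relation, not from Lemma~\ref{commutators}) breaks the circularity you anticipated. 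No appeal to Equation~\eqref{eq;mubptwise} is needed.
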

\begin{proof}
The first identity follows from Lemma \ref{equalDeltas} together with the above vanishing results of $\Hh_\mub$ and $\Hh_\mu$. By Lemma \ref{HIT} we have
$\Hh_\delb^{p,q}\cap\Hh_\mub^{p,q}\subseteq\Hh_{\delb_\mub}^{p,q},$
so let us prove the converse inclusion. The only non-trivial cases are $(p,q)=(1,2)$ and $(p,q)=(2,1)$.
Throughout this proof we will systematically use the commutation relations of Lemma \ref{commutators}.

Let $\omega\in \Ke(\Delta_{\delb_\mub})\cap\Hh_{\mub}^{1,2}$.
Then we have 
\[\delb^* \omega=\Hh_\mu(\delb^*\omega)+G_\mu \mu^*\mu \delb^*\omega+G_\mu \mu\mu^*\delb^*\omega.\]
For degree reasons, the last summand in the above equation is trivial.
Since $\{\delb^*,\mu\}=0$ we have 
\[G_\mu \mu^*\mu \delb^*\omega=-G_\mu \mu^*\delb^*\mu \omega.\]
Since $\omega\in \Hh^{1,2}_{\mub}=\Hh^{1,2}_{\mu}$ this term is also trivial.
Therefore we have \[\delb^*\omega=\Hh_\mu(\delb^*\omega)=\Hh_\mub(\delb^*\omega)=0.\]
Using the $\mub$-decomposition for $\del^*\omega$ we have:
\[\del^*\omega=\Hh_\mub(\del^* \omega)+G_\mub\mub \mub^*  \del^* \omega+G_\mub\mub^* \mub  \del^* \omega\]
where, for degree reasons, the last summand is trivial. We may write the second summand as
\[G_\mub\mub \mub^*  \del^* \omega=-G_\mub \mub (\del^*\mub^*+\delb^*\delb^*)\omega=0.\]
Also, note that by degree reasons, we have $\Hh_\mub(\del^*\omega)=0$. This gives $\del^*\omega=0$.
Lastly, using $\{\mu,\del^*\}=\{\mub^*,\delb\}$,
\[\mub^*\delb \omega=(\del^* \mu-\delb\mub^*)\omega=0.\]
This proves that $\delb \omega=0$, since $\Hh_\mub^{1,3} = 0$, so that $\omega\in\Ke(\Delta_\delb)$.
The proof for the case $\omega\in \Ke(\Delta_{\delb_\mub})\cap\Hh_{\mub}^{2,1}$ follows dually.
\end{proof}

\begin{rmk} ($\Delta_{\delb_\mub}$ is not elliptic) \label{delbmubnonelliptic}
Notice that for $p=1$ and $q=0,1,2$ the spaces $\Hh_\mub^{p,q}$ are vector bundles with fiber rank $0,8,$ and $6$, respectively. The operator $\Delta_{\delb_\mub}:\Hh_\mub^{1,1}  \to \Hh_\mub^{1,1}$ has symbol map factoring through the bundle $\Hh_\mub^{1,0} \oplus \Hh_\mub^{1,2}$, which has strictly lower fiberwise rank than $\Hh_\mub^{1,1}$. This shows the symbol is not an isomorphism.
\end{rmk}

Putting together the above results, together with Example \ref{HDolMNI6}, the Dolbeault cohomology of a compact nearly K\"{a}hler 6-manifold is expressed in terms of harmonic forms in the following way:
\[
H^{*,*}_{\Dol}\cong
\arraycolsep=6pt\def\arraystretch{2}
 \begin{array}{|c|c|c|c|c|}
 \hline
0&0&0&H^{3,3}_{\dR}  \\\hline
0& \Hh_\mub^{1,2}/\Img(\delb_\mub)&  \Hh_\mub^{2,2}/\Img(\delb_\mub)&\Hh_{\delb_\mub}^{3,2}   \\\hline
 \Hh_{\delb_\mub}^{0,1} &\Hh_{\delb_\mub}^{1,1}&\Hh^{2,1}_{d}&0 \\\hline
 \Hh^{0,0}_{d}&0&0&0 \\\hline
\end{array}\]
where the differential $\delta_1^{0,1}$ is injective and $\delta_1^{2,2}$ is surjective. 

\begin{cor} \label{NK6H12}
If a compact almost complex $6$-manifold has $H^{2,1}_{\Dol} \neq H^{2,1}_{\dR}$, then there is no metric for which it is nearly K\"ahler.
\end{cor}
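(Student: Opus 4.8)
The plan is to prove the contrapositive: assuming $M$ carries a metric for which $J$ is nearly K\"ahler, I will show $H^{2,1}_{\Dol}(M)\cong H^{2,1}_{\dR}(M)$, so in particular $h^{2,1}_{\Dol}=h^{2,1}_{\dR}$ and the hypothesis of the corollary fails.

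First I would invoke Lemma \ref{NK6maxnonint} to conclude that a nearly K\"ahler $6$-manifold is maximally non-integrable, and hence, by Lemma \ref{6maxnonint}, that $\mub$ has locally constant rank. This places us in the setting of subsection \ref{LCR}, so the operator $\delb_\mub$ and the spaces $\Hh^{*,*}_{\delb_\mub}$ are defined, and the description of Dolbeault cohomology for maximally non-integrable $6$-manifolds from Example \ref{HDolMNI6} applies. Since $(2,1)$ is not among the exceptional bidegrees $(1,2)$ and $(2,2)$, Example \ref{HDolMNI6} gives $H^{2,1}_{\Dol}(M)\cong \Hh^{2,1}_{\delb_\mub}$. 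Then I would use Lemma \ref{HDol_delb_mub_kah}: the bidegree $(2,1)$ lies in the grey range of that lemma, so $\Hh^{2,1}_{\delb_\mub}=\Hh^{2,1}_\delb\cap \Hh^{2,1}_\mub=\Hh^{2,1}_d$. Finally, Corollary \ref{NK6E2} identifies $H^{2,1}_{\dR}(M):=E_2^{2,1}\cong \Hh^{2,1}_d$. Chaining these isomorphisms yields $H^{2,1}_{\Dol}(M)\cong \Hh^{2,1}_d\cong H^{2,1}_{\dR}(M)$, contradicting $H^{2,1}_{\Dol}\neq H^{2,1}_{\dR}$; hence no nearly K\"ahler metric exists.

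The substantive content has already been established upstream: the genuinely nontrivial point is the identification $\Hh^{2,1}_{\delb_\mub}=\Hh^{2,1}_\delb\cap \Hh^{2,1}_\mub$ in bidegree $(2,1)$, where $\Delta_{\delb_\mub}$ is not elliptic (Remark \ref{delbmubnonelliptic}), and this is exactly what Lemma \ref{HDol_delb_mub_kah} proves, using the nearly K\"ahler commutation relations of Lemma \ref{commutators} together with the Laplacian identities of Lemma \ref{equalDeltas}. Beyond assembling those inputs there is no real obstacle to this corollary; the only step requiring care is the bookkeeping that confirms $(2,1)$ simultaneously lies outside the exceptional bidegrees of Example \ref{HDolMNI6} and inside the grey range of Lemma \ref{HDol_delb_mub_kah}, so that both identifications are available at once.
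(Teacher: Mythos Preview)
Your proof is correct and follows exactly the approach implicit in the paper: the displayed table just before the corollary records $H^{2,1}_{\Dol}\cong \Hh^{2,1}_d$ (obtained, as you say, by combining Example~\ref{HDolMNI6} with Lemma~\ref{HDol_delb_mub_kah}), and Corollary~\ref{NK6E2} gives $H^{2,1}_{\dR}\cong \Hh^{2,1}_d$; the contrapositive then yields the result. One minor inaccuracy in your commentary: Remark~\ref{delbmubnonelliptic} exhibits non-ellipticity of $\Delta_{\delb_\mub}$ in bidegree $(1,1)$, not $(2,1)$, but this does not affect the argument.
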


Finally, we note that the groups $H^{*,*}_{\Dol}$ for bidegrees $(0,1)$,  $(1,1)$, $(2,2)$, and $(3,2)$ are in general not equal to $H^{*,*}_{\dR}$.
In fact,  Example \ref{su2su2} is a nearly K\"{a}hler structure on $SU(2)\times SU(2)$
with
\[\CC^3\subseteq H_{\Dol}^{*,*}\neq H^{*,*}_{\dR}=0,\] 
for all bidegrees $(0,1)$, $(1,1)$, $(2,2)$, and $(3,2)$.
 This shows that  Dolbeault cohomology contains strictly more information than de Rham cohomology in the nearly K\"{a}hler case.

 \appendix
 
 \section{Higher stages of the Fr\"{o}licher spectral sequence}
 
We include a description of all stages of the Fr\"{o}licher spectral sequence of an almost complex manifold $M$.

\begin{thm}\label{description_ss}
For $r\geq 1$, the term $(E_r^{*,*}(M),\delta_r)$ may be described by a quotient
\[E_r^{p,q}(M)\cong Z_r^{p,q}(M)/B_r^{p,q}(M)\]
where:
\begin{enumerate}
 \item For $r=1$ we have:
 \[Z_1^{p,q}(M)\cong \left\{\omega\in \cA^{p,q}\cap\Ke(\mub); \exists\, \omega_1 \text{ with }\delb \omega=\mub \omega_1\right\}
\]
and 
\[B_1^{p,q}(M)\cong
\left\{\eta\in \cA^{p,q}; \exists\, \eta_1, \eta_2\text{ with }\eta=\mub \eta_1+\delb \eta_2\text{ and } \mub \eta_2=0\right\}.
\]
The differential $\delta_1:E_1^{p,q}(M)\to E_1^{p+1,q}(M)$  is given by
\[\delta_1[\omega]=[\del \omega-\delb \omega_1].\]
\item For $r=2$ we have:
\[Z_2^{p,q}(M)\cong \left\{\omega\in \cA^{p,q}\cap\Ke(\mub); \exists\, \omega_1,\omega_2\text{ with }
\delb \omega=\mub \omega_1, \del \omega =\mub \omega_2+\delb \omega_1\right\}
\]
and 
\[B_2^{p,q}(M)\cong 
\left\{\eta\in \cA^{p,q}; \exists\, \eta_1, \eta_2, \eta_3 \text{ with }
\begin{array}{l}
\eta=\mub \eta_1+\delb \eta_2+\del \eta_3,\\
0=\mub \eta_2+\delb \eta_3,\\
0=\mub \eta_3
\end{array}
\right\}.
\]
The differential $\delta_2:E_2^{p,q}(M)\to E_2^{p+2,q-1}(M)$ is given by \[\delta_2[\omega]:=[\mu \omega-\del \omega_1 -\delb \omega_2 ].\]

\item For $r=3$ we have:
\[Z_3^{p,q}(M)\cong \left\{\omega\in \cA^{p,q}\cap\Ke(\mub); \exists\, \omega_1,\omega_2,\omega_3\text{ with }
\begin{array}{l}
\delb \omega=\mub \omega_1,\\
\del \omega =\mub \omega_2+\delb \omega_1,\\
\mu \omega=\mub \omega_3+\delb \omega_2+\del \omega_1
\end{array}
\right\}
\]
and 
\[B_3^{p,q}(M)\cong
\left\{\eta\in \cA^{p,q}; \exists\, \eta_1,\eta_2,\eta_3,\eta_4\text{ with }
\begin{array}{l}
\eta=\mub \eta_1+\delb \eta_2+\del \eta_3+\mu \eta_4\\
0=\mub \eta_2+\delb \eta_3+\del \eta_4,\\
0=\mub \eta_3+\delb \eta_4,\\
0=\mub \eta_4\\
\end{array}
 \right\}
\]
The differential $\delta_3:E_3^{p,q}(M)\to E_3^{p+3,q-2}(M)$ is given by \[\delta_3[\omega]:=[-\mu \omega_1-\del \omega_2-\delb \omega_3].\]

\item For $r\geq 4$ we have:
\[Z_r^{p,q}(M)\cong 
\left\{
\begin{array}{l}
\omega\in \cA^{p,q}\cap\Ke(\mub);\\
\exists\, \omega_1,\cdots,\omega_r
\end{array}
\text{ with }
\begin{array}{l}
\delb \omega=\mub \omega_1,\\
\del \omega =\mub \omega_2+\delb \omega_1,\\
\mu \omega=\mub \omega_3+\delb \omega_2+\del \omega_1\\
\text{ and }\\
0=\mub \omega_i+\delb \omega_{i-1}+\del \omega_{i-2}+\mu \omega_{i-3} \\
\text{ for all }4\leq i\leq r,
\end{array}
\right\}
\]

\[B_r^{p,q}(M)\cong 
\left\{
\begin{array}{l}
 \eta\in \cA^{p,q};\\
  \exists\, \eta_1,\cdots,\eta_{r+1}
\end{array}
\text{ with }
\begin{array}{l}
\eta=\mub \eta_1+\delb \eta_2+\del \eta_3+\mu \eta_4,\\
0=\mub \eta_i+\delb \eta_{i+1}+\del \eta_{i+2}+\mu \eta_{i+3}\\
\text{ for all } 2\leq i\leq r-2,\\
0=\mub \eta_{r-1}+\delb \eta_r+\del \eta_{r+1},\\
0=\mub \eta_r+\delb \eta_{r+1},\\
0=\mub \eta_{r+1}
\end{array}
 \right\}
\]
The differential $\delta_r:E_r^{p,q}(M)\to E_r^{p+r,q-r+1}(M)$ is given by 
\[\delta_r[\omega]:=[-\mu \omega_{r-2}-\del \omega_{r-1}-\delb \omega_r].\]
\end{enumerate}
In all of the above spaces, the elements $\omega_i$ and $\eta_i$ appearing in the formulas are assumed
to be of the pure bidegree prescribed by the equations: $\omega_i\in \cA^{p+i,q-i}$ and $\eta_i\in \cA^{p+2-i,q-3+i}$.
\end{thm}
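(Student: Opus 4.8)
The plan is to run the argument of Theorem~\ref{E1HDol} uniformly across all pages. By Remark~\ref{multicomplex}, the shifted Hodge filtration $\widetilde F$ is the column filtration of the multicomplex $(\widetilde\cA^{*,*},\,d=d_0+d_1+d_2+d_3)$ with $d_0=\mub$, $d_1=\delb$, $d_2=\del$, $d_3=\mu$, $\widetilde\cA^{p,n-p}=\cA^{p-n,2n-p}$, and with $d_j=0$ for $j\ge 4$ (the total differential of an almost complex manifold has only four components). The full spectral sequence $(E_s(\widetilde\cA,\widetilde F),\widetilde\delta_s)$ of such a multicomplex has the standard zig-zag description of Livernet, Whitehouse and Ziegenhagen, \cite{LW}: a class on $E_s$ is represented by a $d_0$-cocycle $\omega=\omega_0$ admitting a string $\omega_1,\dots,\omega_{s-1}$ solving $\sum_{j\ge 0}d_j\omega_{k-j}=0$ for $1\le k\le s-1$; the boundaries are the $\eta$ admitting a string $\eta_1,\dots,\eta_{s+1}$ with $\eta=\sum_j d_j\eta_{j+1}$ and with the vanishing of the analogous truncated sums further down; and $\widetilde\delta_s[\omega]=\big[\sum_j d_j\omega_{s-j}\big]$. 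The first step of the proof is to write these formulas out in the bigrading of $\widetilde\cA$.

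The second step is to transport them to the $F$-page via Lemma~\ref{comparison_ss}, which for every $r\ge 1$ identifies $E_r^{p,q}(M)$ with the appropriately re-indexed term of $E_{r+1}(\widetilde\cA,\widetilde F)$ and $\delta_r$ with $\widetilde\delta_{r+1}$; this is precisely how $r=1,2$ were handled in the proof of Theorem~\ref{E1HDol}. After the shift everything is re-indexing: using $\cA^{a,b}=\widetilde\cA^{2a+b,-a}$ one checks that a string $\omega_i$ of consecutive $\widetilde F$-degrees lands in $\cA^{p+i,q-i}$ and a boundary string $\eta_i$ lands in $\cA^{p+2-i,q-3+i}$, exactly as asserted; and, since $d_{\ge 4}=0$, the relations $\sum_j d_j\omega_{k-j}=0$ become the three initial equations of the statement for $k\le 3$ together with $\mub\omega_k+\delb\omega_{k-1}+\del\omega_{k-2}+\mu\omega_{k-3}=0$ for $4\le k\le r$, while $\widetilde\delta_{r+1}[\omega]=\big[\sum_j d_j\omega_{r+1-j}\big]$ collapses to $[-\mu\omega_{r-2}-\del\omega_{r-1}-\delb\omega_r]$ for $r\ge 3$ and to the displayed expressions for $r=1,2$. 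The signs are harmonized by absorbing them into the $\omega_i$, consistently with the conventions already fixed in Theorem~\ref{E1HDol}.

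I expect the only genuinely fiddly part to be the $B_r$ side in the single-quotient presentation $E_r\cong Z_r/B_r$: one must check that ``$\Img\widetilde\delta_r$ plus the $d_0$-relations'' reassembles into exactly the displayed subspace of $\cA^{p,q}$, in particular that the two top tail equations $0=\mub\eta_r+\delb\eta_{r+1}$ and $0=\mub\eta_{r+1}$ encode that $\eta_{r+1}$ represents a class on the previous page which is then pushed forward. There are two equally workable ways to pin this down: cite the corresponding boundary formula from \cite{LW} and re-index as above, or argue by induction on $r$, computing $H(E_r(M),\delta_r)$ directly from the descriptions of $Z_r$, $B_r$, $\delta_r$ and recognizing the result as $Z_{r+1}/B_{r+1}$ by absorbing the auxiliary forms $\eta_i$ produced by the conditions $\delta_r[\omega]=0$ (respectively by a $\delta_r$-coboundary) into new $\omega_i$'s (respectively new $\eta_i$'s). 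Beyond this index-and-sign bookkeeping, nothing is needed that is not already contained in Lemma~\ref{comparison_ss} and the multicomplex machinery.
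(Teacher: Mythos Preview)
Your approach is correct and is essentially identical to the paper's own proof: reduce via Lemma~\ref{comparison_ss} to the shifted filtration $\widetilde F$, identify it via Remark~\ref{multicomplex} as the column filtration of a four-term multicomplex, and invoke the Livernet--Whitehouse--Ziegenhagen description \cite{LW}, specializing to $d_{\ge 4}=0$. The paper's proof is terser and simply cites \cite{LW} for the final formulas without spelling out the re-indexing or the $B_r$ bookkeeping you discuss, but the strategy is the same.
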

\begin{proof}
By Lemma \ref{comparison_ss} it suffices to describe the terms 
$E_{r}^{*,*}(\cA,\widetilde F)$ for all $r\geq 2$, where $\widetilde{F}$ is the shifted Hodge filtration.
As explained in Remark \ref{multicomplex}, $\widetilde F$ is the column filtration on the total complex of a
multicomplex $(\cA,d_0,d_1,d_2,d_3)$ with four differentials $d_i$ where, due to the shifts in the indices, 
$d_0=\mub$ has bidegree $(0,1)$, $d_1=\delb$ has bidegree $(1,0)$, $d_2=\del$ has bidegree $(2,-1)$ and $d_3=\mu$ has bidegree $(3,-2)$.
The spectral sequence of a general multicomplex has been described by Livernet-Whitehouse-Ziegenhagen in \cite{LW} and gives the above equations in the case of a multicomplex with only four components.
\end{proof}

\begin{rmk}
In the integrable case, the above coincides with the description of the Fr\"{o}licher spectral sequence given in \cite{CFUG}, which is just the spectral sequence of a bicomplex.
\end{rmk}

\bibliographystyle{alpha}

\bibliography{biblio}

\begin{thebibliography}{CFUG97}

\bibitem[Arm97]{Arm}
J.~Armstrong.
\newblock On four-dimensional almost {K}\"ahler manifolds.
\newblock {\em Quart. J. Math. Oxford Ser. (2)}, 48(192):405--415, 1997.

\bibitem[ATZ14]{ATZ}
D.~Angella, A.~Tomassini, and W.~Zhang.
\newblock On cohomological decomposability of almost-{K}\"{a}hler structures.
\newblock {\em Proc. Amer. Math. Soc.}, 142(10):3615--3630, 2014.

\bibitem[Bry88]{Brylinski}
J.-L. Brylinski.
\newblock A differential complex for {P}oisson manifolds.
\newblock {\em J. Differential Geom.}, 28(1):93--114, 1988.

\bibitem[Cav06]{Cavalcanti}
G.~R. Cavalcanti.
\newblock The decomposition of forms and cohomology of generalized complex
  manifolds.
\newblock {\em J. Geom. Phys.}, 57(1):121--132, 2006.

\bibitem[Cav09]{CavalcantiDol}
G.~R. Cavalcanti.
\newblock Computations of generalized {D}olbeault cohomology.
\newblock In {\em Special metrics and supersymmetry}, volume 1093 of {\em AIP
  Conf. Proc.}, pages 57--69. Amer. Inst. Phys., Melville, NY, 2009.

\bibitem[CF01]{ConFi}
S.~Console and A.~Fino.
\newblock Dolbeault cohomology of compact nilmanifolds.
\newblock {\em Transform. Groups}, 6(2):111--124, 2001.

\bibitem[CFG91]{CFG}
L.~A. Cordero, M.~Fern\'andez, and A.~Gray.
\newblock The {F}r\"olicher spectral sequence for compact nilmanifolds.
\newblock {\em Illinois J. Math.}, 35(1):56--67, 1991.

\bibitem[CFGU00]{CFGU2}
L.~A. Cordero, M.~Fern\'andez, A.~Gray, and L.~Ugarte.
\newblock Compact nilmanifolds with nilpotent complex structures: {D}olbeault
  cohomology.
\newblock {\em Trans. Amer. Math. Soc.}, 352(12):5405--5433, 2000.

\bibitem[CFUG97]{CFUG}
L.~A. Cordero, M.~Fern\'andez, L.~Ugarte, and A.~Gray.
\newblock A general description of the terms in the {F}r\"olicher spectral
  sequence.
\newblock {\em Differential Geom. Appl.}, 7(1):75--84, 1997.

\bibitem[Cor89]{C}
L.~A. Cordero.
\newblock Holomorphic principal torus bundles, curvature and compact complex
  nilmanifolds.
\newblock In {\em Proceedings of the {W}orkshop on {C}urvature {G}eometry
  ({L}ancaster, 1989)}, pages 107--149. ULDM Publ., Lancaster, 1989.

\bibitem[CPS21]{Stelzigandfriends}
R.~Coelho, G.~Placini, and J.~Stelzig.
\newblock Maximally non-integrable almost complex structures: an h-principle
  and cohomological properties.
\newblock {\em Preprint arxiv:2105.12113}, 2021.

\bibitem[CW20a]{AK}
J.~Cirici and S.~O. Wilson.
\newblock Topological and geometric aspects of almost {K}\"{a}hler manifolds
  via harmonic theory.
\newblock {\em Selecta Math. (N.S.)}, 26(3):35, 2020.

\bibitem[CW20b]{CWAH}
J.~Cirici and Scott~O. Wilson.
\newblock Almost hermitian identities.
\newblock {\em Mathematics}, 8(8), 2020.

\bibitem[CZ18]{CZ}
H.~Chen and W.~Zhang.
\newblock Kodaira dimensions of almost complex manifolds.
\newblock {\em Preprint arxiv:1808.00885}, 2018.

\bibitem[Del71]{DeHII}
P.~Deligne.
\newblock Th\'eorie de {H}odge. {II}.
\newblock {\em Inst. Hautes \'Etudes Sci. Publ. Math.}, (40):5--57, 1971.

\bibitem[DLZ13]{Dra2}
T.~Draghici, T.-J. Li, and W.~Zhang.
\newblock On the {$J$}-anti-invariant cohomology of almost complex 4-manifolds.
\newblock {\em Q. J. Math.}, 64(1):83--111, 2013.

\bibitem[Don90]{Donaldson}
S.~K. Donaldson.
\newblock Yang-{M}ills invariants of four-manifolds.
\newblock In {\em Geometry of low-dimensional manifolds, 1 ({D}urham, 1989)},
  volume 150 of {\em London Math. Soc. Lecture Note Ser.}, pages 5--40.
  Cambridge Univ. Press, Cambridge, 1990.

\bibitem[Fro55]{Fro}
A.~Frolicher.
\newblock Relations between the cohomology groups of {D}olbeault and
  topological invariants.
\newblock {\em Proc. Nat. Acad. Sci. U.S.A.}, 41:641--644, 1955.

\bibitem[FRR19]{FRR}
A.~Fino, S.~Rollenske, and J.~Ruppenthal.
\newblock Dolbeault cohomology of complex nilmanifolds foliated in toroidal
  groups.
\newblock {\em Q. J. Math.}, 70(4):1265--1279, 2019.

\bibitem[GH94]{GrHa}
P.~Griffiths and J.~Harris.
\newblock {\em Principles of algebraic geometry}.
\newblock Wiley Classics Library. John Wiley \& Sons, Inc., New York, 1994.
\newblock Reprint of the 1978 original.

\bibitem[Gra70]{Gray}
A.~Gray.
\newblock Nearly {K}\"ahler manifolds.
\newblock {\em J. Differential Geometry}, 4:283--309, 1970.

\bibitem[Gua11]{Gualtieri}
M.~Gualtieri.
\newblock Generalized complex geometry.
\newblock {\em Ann. of Math. (2)}, 174(1):75--123, 2011.

\bibitem[Hir54]{Hirzebruch}
F.~Hirzebruch.
\newblock Some problems on differentiable and complex manifolds.
\newblock {\em Ann. of Math. (2)}, 60:213--236, 1954.

\bibitem[Hit03]{Hitchin}
N.~Hitchin.
\newblock Generalized {C}alabi-{Y}au manifolds.
\newblock {\em Q. J. Math.}, 54(3):281--308, 2003.

\bibitem[HMT14]{HMT}
R.~Hind, C.~Medori, and A.~Tomassini.
\newblock On non-pure forms on almost complex manifolds.
\newblock {\em Proc. Amer. Math. Soc.}, 142(11):3909--3922, 2014.

\bibitem[Hop02]{Hopf}
E.~Hopf.
\newblock {\em Selected works of {E}berhard {H}opf with commentaries}.
\newblock American Mathematical Society, Providence, RI, 2002.
\newblock Edited by Cathleen S. Morawetz, James B. Serrin and Yakov G. Sinai.

\bibitem[HZ20]{HZ}
T.~Holt and W.~Zhang.
\newblock Harmonic forms on the {K}odaira-{T}hurston manifold.
\newblock {\em Preprint arxiv:2001.10962}, 2020.

\bibitem[Kot13]{Problem}
D.~Kotschick.
\newblock Updates on {H}irzebruch's 1954 {P}roblem {L}ist.
\newblock {\em Preprint arXiv:1305.4623}, 2013.

\bibitem[LWZ20]{LW}
M.~Livernet, S.~Whitehouse, and S.~Ziegenhagen.
\newblock On the spectral sequence associated to a multicomplex.
\newblock {\em J. Pure Appl. Algebra}, 224(2):528--535, 2020.

\bibitem[LZ09]{LZ}
T.-J. Li and W.~Zhang.
\newblock Comparing tamed and compatible symplectic cones and cohomological
  properties of almost complex manifolds.
\newblock {\em Comm. Anal. Geom.}, 17(4):651--683, 2009.

\bibitem[Mal49]{Malcev}
A.~I. Mal'cev.
\newblock On a class of homogeneous spaces.
\newblock {\em Izvestiya Akad. Nauk. SSSR. Ser. Mat. Amer. Math. Soc. Transl.,
  39 (1951)}, 13:9--32, 1949.

\bibitem[Mil20]{MiliSerre}
A.~Milivojevi\'{c}.
\newblock Another proof of the persistence of {S}erre symmetry in the
  {F}r\"{o}licher spectral sequence.
\newblock {\em Complex Manifolds}, 7(1):141--144, 2020.

\bibitem[Mus86]{Mus}
O.~Muskarov.
\newblock Existence of holomorphic functions on almost complex manifolds.
\newblock {\em Math. Z.}, 192(2):283--295, 1986.

\bibitem[NN57]{NN}
A.~Newlander and L.~Nirenberg.
\newblock Complex analytic coordinates in almost complex manifolds.
\newblock {\em Ann. of Math. (2)}, 65:391--404, 1957.

\bibitem[Nom54]{Nomizu}
K.~Nomizu.
\newblock On the cohomology of compact homogeneous spaces of nilpotent {L}ie
  groups.
\newblock {\em Ann. of Math. (2)}, 59:531--538, 1954.

\bibitem[Ohs82]{Osh}
T.~Ohsawa.
\newblock Isomorphism theorems for cohomology groups of weakly {$1$}-complete
  manifolds.
\newblock {\em Publ. Res. Inst. Math. Sci.}, 18(1):191--232, 1982.

\bibitem[Rol09]{Rollenske2}
S.~Rollenske.
\newblock Lie-algebra {D}olbeault-cohomology and small deformations of
  nilmanifolds.
\newblock {\em J. Lond. Math. Soc. (2)}, 79(2):346--362, 2009.

\bibitem[Rol11]{Rollenske}
S.~Rollenske.
\newblock Dolbeault cohomology of nilmanifolds with left-invariant complex
  structure.
\newblock In {\em Complex and differential geometry}, volume~8 of {\em Springer
  Proc. Math.}, pages 369--392. Springer, Heidelberg, 2011.

\bibitem[ST20]{SiTo}
L.~Sillari and A.~Tomassini.
\newblock Dolbeault and {J}-invariant cohomologies on almost complex manifolds.
\newblock {\em Preprint arXiv:2009.12608}, 2020.

\bibitem[TT20]{TaToPreprint}
N.~Tardini and A.~Tomassini.
\newblock Differential operators on almost-{H}ermitian manifolds and harmonic
  forms.
\newblock {\em Complex Manifolds}, 7(1):106--128, 2020.

\bibitem[TY12a]{TY1}
L.-S. Tseng and S.-T. Yau.
\newblock Cohomology and {H}odge theory on symplectic manifolds: {I}.
\newblock {\em J. Differential Geom.}, 91(3):383--416, 2012.

\bibitem[TY12b]{TY2}
L.-S. Tseng and S.-T. Yau.
\newblock Cohomology and {H}odge theory on symplectic manifolds: {II}.
\newblock {\em J. Differential Geom.}, 91(3):417--443, 2012.

\bibitem[Ver08]{Verbitsky2}
M.~Verbitsky.
\newblock An intrinsic volume functional on almost complex 6-manifolds and
  nearly {K}\"ahler geometry.
\newblock {\em Pacific J. Math.}, 235(2):323--344, 2008.

\bibitem[Ver11]{Verbitsky}
M.~Verbitsky.
\newblock Hodge theory on nearly {K}\"ahler manifolds.
\newblock {\em Geom. Topol.}, 15(4):2111--2133, 2011.

\bibitem[Wil20]{SWH}
S.~O. Wilson.
\newblock Harmonic symmetries for {H}ermitian manifolds.
\newblock {\em Proc. Amer. Math. Soc.}, 148(7):3039--3045, 2020.

\end{thebibliography}

\end{document}